\DeclareSymbolFont{SY}{U}{psy}{m}{n}
\DeclareMathSymbol{\emptyset}{\mathord}{SY}{'306}
\theoremstyle{plain}
\newtheorem{thm}{Theorem}[section]
\newtheorem{cor}[thm]{Corollary}
\newtheorem{lem}[thm]{Lemma}
\newtheorem{prop}[thm]{Proposition}
\newtheorem{defn}[thm]{Definition}
\theoremstyle{definition}
\newtheorem{rem}[thm]{Remark}
\numberwithin{equation}{section}
\def\ra{\rightarrow}
\def\beq{\begin{eqnarray}}
\def\eeq{\end{eqnarray}}
\def\beqa{\begin{eqnarray*}}
\def\eeqa{\end{eqnarray*}}
\def\la{\langle}
\def\ra{\rangle}
\begin{document}
\title[Geometric Similarity invariants of Cowen-Douglas Operators]{Geometric Similarity invariants of Cowen-Douglas Operators }

\author{Chunlan Jiang, Kui Ji and Dinesh Kumar Keshari}
\curraddr[C. Jiang and K. Ji]{Department of Mathematics, Hebei Normal University,
Shijiazhuang, Hebei 050016, China} 
\curraddr{School of Mathematical Sciences, National Institute of Science Education and Research, Bhubaneswar, HBNI, Post-Jatni, Khurda, 752050, India}

\email[C. Jiang]{cljiang@hebtu.edu.cn}
\email[K. Ji]{jikui@hebtu.edu.cn, jikuikui@163.com}
\email[D. K. Keshari]{dinesh@niser.ac.in}

\thanks{
The first and the second authors were Supported by National Natural Science
Foundation of China (Grant No. 11831006). The third author was supported by INSPIRE Faculty Award [DST/INSPIRE/04/2014/002519], Department of Science and Technology (DST), India.
The third author thank the host for warm hospitality during the research visit to Department of Mathematics, Hebei Normal University.}

\subjclass[2000]{Primary 47C15, 47B37; Secondary 47B48, 47L40}

\keywords{curvature, second fundamental form, similarity, Property (H)}

\begin{abstract}


In 1978, M. J. Cowen and R.G. Douglas introduced a class of operators $B_n(\Omega)$ (known as Cowen-Douglas class of operators) and associated a Hermitian holomorphic vector bundle to such an operator. 
They gave a complete set of unitary invariants in terms of the curvature and its covariant derivatives. At the same time they asked whether one can use geometric ideas to find a complete set of similarity invariants of Cowen-Douglas operators. We give a partial answer to this question. In this paper, we show that the curvature and the second fundamental form completely determine the similarity orbit of a norm dense class of Cowen-Douglas operators. As an application we show that uncountably many (non-similar) strongly irreducible operators in $B_n(\mathbb{D})$ can be constructed from a given operator in $B_1(\mathbb{D})$. We also characterize a class of strongly irreducible weakly homogeneous operators in $B_n(\mathbb{D})$.
\end{abstract}

\maketitle

\section{Introduction}

Let $\mathcal H$ be a complex separable Hilbert space, $\mathcal{L}(\mathcal{H})$ be the set of all bounded  linear operators on $\mathcal{H}$ and   Grassmannian $\mbox{Gr}(n,{\mathcal H})$ be the set of all $n$-dimensional subspaces of the Hilbert space ${\mathcal H}.$
For an  open bounded  connected subset $\Omega$  of the complex plane
$\mathbb{C},$ and $n\in \mathbb N,$ 
a map $t: \Omega \rightarrow \mbox{Gr}(n,{\mathcal H})$ is said to be a holomorphic curve, if there exist $n$ (point-wise linearly independent) holomorphic functions $ \gamma_1 ,\gamma_2 ,\cdots, \gamma_n$ on ${\Omega}$ taking values in $\mathcal H$ such that $t(w)=\bigvee \{\gamma_1(w),\cdots,\gamma_n(w)\},$ $w\in \Omega.$ 
Each holomorphic curve $t:\Omega \rightarrow \mbox{Gr}(n,{\mathcal H})$
gives rise to a rank $n$ Hermitian holomorphic vector bundle $E_t$ over $\Omega$, namely,
$$E_{t}=\{(x,w)\in {\mathcal H}\times \Omega \mid x\in t(w)\}
\,\,\mbox{and} \,\,\pi:E_t\rightarrow \Omega,\,\,\mbox{where}
\,\,\pi(x,w)=w.$$

In 1978, M. J. Cowen and R. G. Douglas introduce a class of  operators denoted by $B_n(\Omega)$ in a very influential paper \cite{CD}. An operator $T$ acting on  $\mathcal H$ is said to be a Cowen-Douglas operator with index $n$ associated with an open bounded subset $\Omega$  (or $T\in B_n(\Omega)$),  if  $T-w$ is surjective, $\dim \mbox{ker}(T-w)=n$ for all $w\in \Omega$ and $\bigvee\limits_{w\in \Omega} \mbox{ker}(T-w)=\mathcal{H}$.

M. J. Cowen and R. G. Douglas  showed that each operator $T$ in $B_n(\Omega)$  gives also rise to a rank $n$ Hermitian holomorphic vector bundle $E_T$ over $\Omega$, 
$$E_{T}=\{(x,w)\in {\mathcal H}\times \Omega \mid x\in \ker (T-w)\}
\,\,\mbox{and} \,\,\pi:E_T\rightarrow \Omega,\,\,\mbox{where}
\,\,\pi(x,w)=w.$$

Two holomorphic curves $t,\,\tilde{t}:
\Omega\rightarrow \mbox{Gr}(n,{\mathcal H})$  are said to be congruent if vector bundles $E_t$ and $E_{\tilde{t}}$ are locally equivalent as a Hermitian holomorphic vector bundle. Furthermore, $t$ and $\tilde{t}$ are said to be unitarily equivalent (denoted by $t\sim_u\tilde{t}$), if there exists a unitary operator $U (\in \mathcal{B}(\mathcal{H}))$ such that $U(w)t(w)=\tilde{t}(w),$  where $U(w):=U|_{E_t(w)}$ is the restriction of the unitary operator $U$ to the fiber $E_t(w)=\pi^{-1}(w)$. It is easy to see,  by using the Rigidity Theorem in \cite{CD}, $t$ and $\tilde{t}$ are congruent if and only if $t$ and $\tilde{t}$ are  unitarily equivalent.

 Two holomorphic curves  $t$ and
$\tilde{t}$ are said to be similarity equivalent (denoted by $t\sim_s \tilde{t}$) if there
exists an invertible operator $X\in {\mathcal B(\mathcal{H})}$ such that
$X(w)t(w)={\tilde{t}(w)}$, where $X(w):= X_{|{E_t}(w)}$ is the restriction of $X$ to the fiber $E_t(w)$. In this case, we say that the vector bundles $E_t$ and $E_{\tilde{t}}$ are similarity equivalent.

For an open   bounded connected subset $\Omega$ of $\mathbb{C}$, a Cowen-Douglas operator $T$ with index $n$  induces a non-constant 
holomorphic curve $t:\Omega \to \mbox{Gr}(n,{\mathcal H})$, namely,  $t(w)=
\ker(T-w), w\in \Omega$ and hence a Hermitian holomorphic vector bundle $E_t$ (here vector bundle $E_t$ is same as $E_T$). Unitary and similarity invariants for the operator $T$ are obtained from the vector bundle $E_T$.  


To describe these invariants, we need the curvature of the vector bundle $E_T$ along with its covariant derivatives. Let us recall some of these notions
following \cite{CD}.

The metric of bundle $E_T$ with respect to a holomorphic frame $\gamma$ is given by 
$$h_\gamma(w)=\big (\!\!\big (\langle \gamma_j(w),\gamma_i(w)\rangle \big ) \!\!\big )_{i,j=1}^n,\;
w\in \Omega,$$ where $\gamma(w)=\bigvee\{\gamma_1(w),\cdots,\gamma_n(w)\}, w\in \Omega.$

If we let $\bar{\partial}$ denote the complex
structure of the vector bundle $E_T,$ then the connection compatible
with both the complex structure $\bar{\partial}$  and the metric $h_{\gamma}$ is
canonically determined and is given by the formula $h_{\gamma}^{-1}(\tfrac{\partial }{\partial w} h_{\gamma})
\,dw.$ The curvature of the Hermitian holomorphic  vector bundle
$E_T$ is the $(1,1)$ form
$$ \bar{\partial}\big (h_\gamma^{-1}(w){\partial}
h_\gamma(w)\big )=-\frac{\partial}{\partial \overline{w}}\big (h_\gamma^{-1}(w)\frac{\partial}{\partial w}
h_\gamma(w)\big )\, dw \wedge d\bar{w}. \eqno$$ We let $\mathcal{K}_T(w)$ denote the coefficient of this $(1,1)$ form, that is,
$\mathcal{K}_T(w):= -\frac{\partial}{\partial \overline{w}}\big (h_\gamma^{-1}(w)\frac{\partial}{\partial w}
h_\gamma(w)\big )$ and we call it curvature.

Since the curvature $\mathcal{K}_T$ can be thought of  as a bundle map, following the definition of the covariant partial derivatives of bundle map,  
its  covariant partial derivatives $ \mathcal{K}_{T, w^i\overline{w}^j}$, $i,j\in
\mathbb{N}\cup \{0\}$ are given by: 

 (1)\,\,$\mathcal{K}_{T,w^i\overline{w}^{j+1}}=\frac{\partial}{\partial \overline{w}}(\mathcal{K}_{T,w^i\overline{w}^{j}});$

 (2)\,\,$\mathcal{K}_{T,w^{i+1}\overline{w}^j}=\frac{\partial}{\partial w}(\mathcal{K}_{T,w^{i}\overline{w}^j})+[h_{\gamma}^{-1}\frac{\partial}{\partial w}h_{\gamma},\mathcal{K}_{T,w^{i}\overline{w}^j}].$

The curvature and its covariant partial derivatives  are complete  unitary invariants of for an operator in the Cowen-Douglas class. M. J. Cowen and R. G. Douglas proved in \cite{CD} the following theorem.

\begin{thm}\label{CDT1}
Let $T$ and $\tilde{T}$ be two Cowen-Douglas operators with index n. Then $T$ and $\tilde{T}$ are  unitarily equivalent if
and only if there exists an isometric holomorphic bundle map $V: E_{T}\rightarrow E_{\tilde{T}}$
such that
$$V \big ( {\mathcal K}_{T,w^i\overline{w}^j} \big )=\big ( {\mathcal K}_{\tilde{T},w^i\overline{w}^j}\big ) V,\;
\, 0\leq i\leq j\leq i+j\leq n, \, (i,j)\neq (0,n),(n,0).$$
\end{thm}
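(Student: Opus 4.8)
The plan is to establish the two implications separately; the forward one (necessity of the condition) is routine and the converse carries all the weight. \emph{Necessity:} suppose $UTU^{*}=\tilde T$ for a unitary $U\in\mathcal L(\mathcal H)$. Then $U$ maps $\ker(T-w)$ isometrically onto $\ker(\tilde T-w)$ for every $w$, so the restrictions $V(w):=U|_{E_{T}(w)}$ assemble into an isometric holomorphic bundle map $V\colon E_{T}\to E_{\tilde T}$. For a holomorphic frame $\gamma=(\gamma_{1},\dots,\gamma_{n})$ of $E_{T}$ near $w_{0}$, the tuple $U\gamma=(U\gamma_{1},\dots,U\gamma_{n})$ is again a holomorphic frame (bounded linear maps preserve holomorphy) and $h_{U\gamma}=h_{\gamma}$ as matrix-valued functions, since $U$ is unitary. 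Because $\mathcal K_{T,w^{i}\overline w^{j}}$ and $\mathcal K_{\tilde T,w^{i}\overline w^{j}}$ are produced from $h_{\gamma}$ and $h_{U\gamma}$ by the identical recursions (1) and (2), their matrices in the frames $\gamma$ and $U\gamma$ coincide, and evaluating on the frame vectors gives $V(\mathcal K_{T,w^{i}\overline w^{j}})=(\mathcal K_{\tilde T,w^{i}\overline w^{j}})V$ for \emph{all} $i,j$, in particular in the asserted range.

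For \emph{sufficiency}, the strategy is to convert the pointwise hypothesis into a local equivalence of $E_{T}$ and $E_{\tilde T}$ as Hermitian holomorphic bundles and then lift it to a unitary on $\mathcal H$. I would fix $w_{0}$ and a small disc $D\ni w_{0}$ and, by a holomorphic change of frame, put $E_{T}|_{D}$ and $E_{\tilde T}|_{D}$ in \emph{normalised} frames $\gamma,\delta$ at $w_{0}$: $h_{\gamma}(w_{0})=h_{\delta}(w_{0})=I$ and all pure holomorphic derivatives of $h_{\gamma},h_{\delta}$ vanish at $w_{0}$ (one solves, order by order, the linear equations this imposes on the derivatives of the change-of-frame matrix); by Hermiticity the pure antiholomorphic derivatives then vanish as well, so
\[
h_{\gamma}(w)=I+\sum_{p,q\ge1}A_{pq}(w-w_{0})^{p}\overline{(w-w_{0})}^{\,q},\qquad h_{\delta}(w)=I+\sum_{p,q\ge1}B_{pq}(w-w_{0})^{p}\overline{(w-w_{0})}^{\,q}.
\]
Running the recursions (1) and (2) at $w_{0}$ in such a frame shows $\mathcal K_{T,w^{i}\overline w^{j}}(w_{0})=c_{ij}A_{i+1,j+1}+P_{ij}\big(\{A_{pq}:p+q<i+j+2\}\big)$ with $c_{ij}\ne0$ and $P_{ij}$ a universal polynomial; hence the curvature jets at $w_{0}$ and the coefficients $\{A_{pq}\}$ determine one another triangularly, and likewise for $\delta$.

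The core step is to show that the intertwining relations force $A_{pq}=W_{0}^{*}B_{pq}W_{0}$ for all $p,q\ge1$, where $W_{0}$ is the unitary matrix of $V(w_{0})$ in the orthonormal bases $\gamma(w_{0}),\delta(w_{0})$. Writing each relation $V(\mathcal K_{T,w^{i}\overline w^{j}})=(\mathcal K_{\tilde T,w^{i}\overline w^{j}})V$ in matrix form over $D$, differentiating at $w_{0}$, and feeding in the constraints that the normalisation and the fibrewise unitarity of $V$ place on the jets of its matrix, one runs an induction on $p+q$: at step $p+q$ a suitable index $(i,j)$ isolates $A_{pq}$ against $W_{0}^{*}B_{pq}W_{0}$ modulo terms already handled. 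Granting this, $h_{\gamma}=W_{0}^{*}h_{\delta}W_{0}$ on $D$; equivalently, setting $\delta'_{i}:=\sum_{k}(W_{0})_{ki}\delta_{k}$ (a holomorphic frame of $E_{\tilde T}$) we get $h_{\gamma}=h_{\delta'}$ on $D$. I would then define $U$ on $\mathrm{span}\{\gamma_{i}(w):w\in D,\ 1\le i\le n\}$ by $U\gamma_{i}(w):=\delta'_{i}(w)$: the functions $\langle U\gamma_{i}(w),U\gamma_{j}(w')\rangle$ and $\langle\gamma_{i}(w),\gamma_{j}(w')\rangle$ are holomorphic in $w$, antiholomorphic in $w'$, and equal on the diagonal (both give entries of $h_{\gamma}=h_{\delta'}$), hence equal on $D\times D$; so $U$ is a well-defined isometry on a dense subspace of $\mathcal H$ (a vector orthogonal to $\ker(T-w)$ for all $w\in D$ is orthogonal to $\ker(T-w)$ for every $w\in\Omega$ by analytic continuation, hence $0$), it is onto since $\{\delta'_{i}(w):w\in D\}$ spans $\mathcal H$ similarly, and it carries $\ker(T-w)$ onto $\ker(\tilde T-w)$ for $w\in D$, whence $\tilde TU=UT$ on the dense span and so $UTU^{*}=\tilde T$.

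The main obstacle is precisely the induction just sketched, and within it the verification that the \emph{specified finite range} $0\le i\le j\le i+j\le n$, $(i,j)\ne(0,n),(n,0)$, already forces all the equalities $A_{pq}=W_{0}^{*}B_{pq}W_{0}$. Reality of the metric disposes of the indices with $i>j$; but showing that the covariant derivatives of total order exceeding $n$, together with the two corner cases, impose no further obstruction requires the structure equations of the canonical connection and a rank/jet-dimension count, the rank $n$ of the bundle bounding the order of jets that must be matched. This is the delicate linear-algebraic analysis of \cite{CD}, and carrying it out faithfully is where the real work lies.
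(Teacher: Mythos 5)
The paper does not prove this theorem: it is imported verbatim from Cowen--Douglas \cite{CD} as background, so there is no in-paper argument to measure yours against. On its own terms, your necessity direction is complete and correct, and your sufficiency direction reproduces the right architecture of the original proof: normalised frames at $w_0$, the triangular correspondence between the metric coefficients $A_{pq}$ and the jets of the curvature, and the polarisation/rigidity step that upgrades a local congruence $h_\gamma=h_{\delta'}$ to a global unitary. That last step is sound --- equality of the Gram functions on the diagonal together with holomorphy in $w$ and anti-holomorphy in $w'$ does give equality on $D\times D$, and density of $\bigvee_{w\in D}\ker(T-w)$ lets the local isometry extend to a unitary intertwiner.

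The gap is the one you concede yourself: the assertion that the intertwining relations for the finite range $0\le i\le j\le i+j\le n$, $(i,j)\neq(0,n),(n,0)$ force $A_{pq}=W_0^*B_{pq}W_0$ for \emph{all} $p,q\ge 1$ is stated, not proved, and it is the entire content of the theorem; everything else in your write-up is soft. As it stands the proposal is an accurate roadmap of \cite{CD}, not a proof. Two further cautions. First, under the literal hypothesis as quoted here ($V$ an isometric \emph{holomorphic} bundle map), sufficiency collapses: such a $V$ carries a holomorphic frame $\gamma$ to a holomorphic frame $V\gamma$ with $h_{V\gamma}=h_\gamma$, so $E_T$ and $E_{\tilde T}$ are already locally equivalent and the Rigidity Theorem finishes without any curvature condition; your normalisation machinery is only needed for the genuine Cowen--Douglas version in which $V$ is merely a fibrewise isometry, and you should say which version you are proving. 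Second, your induction differentiates the intertwining relations at $w_0$, which requires them to hold on a neighbourhood and requires control of the derivatives of (the matrix of) $V$; the bookkeeping that shows the excluded corner indices and all orders $i+j>n$ impose no further conditions is exactly the delicate part, and waving at ``a rank/jet-dimension count'' does not discharge it.
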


In particular, if $T$ and $\tilde{T}$ are Cowen-Douglas operators with index one, then $T\sim_u \tilde{T}$ if and only if $\mathcal{K}_{T}=\mathcal{K}_{\tilde{T}}$.

 Theorem \ref{CDT1} says  that the local  complex geometric invariants  are 
of global nature from the point of view of  unitary equivalence. However, for similarity equivalence the global invariants are not 
easily detectable by the local invariants such as the curvature and its covariant derivatives.  It is because  that the  
holomorphic bundle map determined by invertible operators is not rigid. In other words, one does not know when a bundle map 
with local isomorphism can be extended to an invertible operator in $\mathcal{B}(\mathcal{H})$. In the absence of a characterization of equivalent classes under an invertible linear transformation, M. J. Cowen and R. G. Douglas made following conjecture in \cite{CD}.

{\bf Conjeture:} Let $\mathbb{D}$ denote the open unit disk. Let $T, \tilde{T} \in B_1(\mathbb{D})$ with the spectrums of $T$ and $\tilde{T}$ are closure of $\mathbb{D}$ (denoted by $\bar{\mathbb{D}}$). Then $T\sim_s \tilde{T}$ if and only if $$\lim\limits_{w\rightarrow \partial \mathbb{D}}\frac{\mathcal{K}_T(w)}{\mathcal{K}_{\tilde{T}}(w)}=1.$$

This conjecture turned out to be false (cf. \cite{ CM1, CM}).

The class of Cowen-Douglas operators is very rich. In fact,  the norm closure 
of Cowen-Douglas operators contains the collection of all quasi-triangular operators with spectrum being connected. This follows from the famous 
similarity orbit theorem given by C. Apostol, L. A. Fialkow, D. A. Herrero and D. Voiculescu (See \cite{SO}).  Subsequently, the Cowen-Douglas operator has been one of the important ingredients in the research of operator theory(cf \cite{Al, CA, CM, Chen, EL1, EL2, Jig, JLY, KS, LS, MS, MJ, NY, RR, WZ, DW, YGL}).

%
%
%
%

To find similarity invariant for Cowen-Douglas operators in terms of geometric invariants, we need following concepts and theorems. 


\begin{thm}[{Upper triangular representation theorem}, \cite{JW}] 


 Let $T\in\mathcal{L}(\mathcal{H})$ be a Cowen-Douglas operator with index $n$,
 then there exists a orthogonal decomposition $\mathcal{H}=\mathcal{H}_1\oplus\mathcal{H}_2\oplus\cdots\oplus\mathcal{H}_n$ and  operators $T_{1,1}, T_{2,2}, \cdots, T_{n,n}$ in $B_1(\Omega)$ such that $T$ takes following form 

\begin{equation} \label{1.1T}
T=\left ( \begin{smallmatrix}T_{1,1} & T_{1,2}& T_{1,3}& \cdots & T_{1,n}\\
0&T_{2,2}&T_{2,3}&\cdots&T_{2,n}\\
\vdots&\ddots&\ddots&\ddots&\vdots\\
0&\cdots&0&T_{n-1,n-1}&T_{n-1,n}\\
0&\cdots&\cdots&0&T_{n,n}
\end{smallmatrix}\right ).
\end{equation}

\renewcommand\arraystretch{1}\noindent

Let 
$\{\gamma_1,\gamma_2,\cdots,\gamma_{n}\}$ be a holomorphic frame
of  $E_T$ with ${\mathcal H}=
\bigvee\{\gamma_i(w),\;w\in \Omega,\;1\leq i \leq n\}, $  and   $t_i:\Omega \to
\mbox{Gr}(1,{\mathcal H}_i)$  be a holomorphic frame of $E_{T_{i,i}},\,1\leq i \leq n$. Then we can find certain 
relation between $\{\gamma_i\}^{n}_{i=1}$ and $\{t_i\}^{n}_{i=1}$ as the following equations:
\begin{equation}\label{1.1}\begin{array}{llll}\gamma_1&=&t_1\\
                   \gamma_{2}&=&\phi_{1,2}(t_2)+t_{2}\\
                   \gamma_{3}&=&\phi_{1,3}(t_3)+\phi_{2,3}(t_3)+t_{3}\\
                  \cdots &=& \cdots \cdots\cdots\cdots\cdots\cdots\cdots\cdots\\
                    \gamma_{j}&=&\phi_{1,j}(t_j)+\cdots+\phi_{i,j}(t_j)+\cdots+t_{j}\\
                    \cdots &=& \cdots \cdots\cdots\cdots\cdots\cdots\cdots\cdots\cdots\cdots\cdots\\
                   \gamma_{n}&=&\phi_{1,n}(t_{n})+\cdots+\phi_{i,n}(t_{n})+\cdots+t_{n},\\
\end{array}\end{equation}
where $\phi_{i,j}, (i,j=1,2,\cdots,n)$ are certain holomorphic bundle maps.

\end{thm}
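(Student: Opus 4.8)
The plan is an induction on the index $n$, peeling off a $B_1(\Omega)$ invariant subspace at each step, followed by a bookkeeping argument for the frame identities \eqref{1.1}. For $n=1$ there is nothing to do (take $\mathcal{H}_1=\mathcal{H}$, $T_{1,1}=T$, $\gamma_1=t_1$), so suppose $n\geq 2$ and that the statement holds in index $n-1$. The crux is the following assertion (call it the Claim), to be established below: there is a $T$-invariant subspace $\mathcal{H}_1$ with $T|_{\mathcal{H}_1}\in B_1(\Omega)$ whose orthocomplementary compression $\widetilde{T}:=P_{\mathcal{H}_1^{\perp}}\,T|_{\mathcal{H}_1^{\perp}}$ lies in $B_{n-1}(\Omega)$. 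Granting the Claim, I would apply the induction to $\widetilde{T}$ to obtain $\mathcal{H}_1^{\perp}=\mathcal{H}_2\oplus\cdots\oplus\mathcal{H}_n$, the upper-triangular form of $\widetilde{T}$, and its $B_1(\Omega)$ diagonal entries $T_{2,2},\dots,T_{n,n}$. Since $\mathcal{H}_1$ is $T$-invariant, $T$ is block upper-triangular for $\mathcal{H}_1\oplus\mathcal{H}_1^{\perp}$ with corner $\widetilde{T}$; refining the corner gives \eqref{1.1T} with $T_{1,1}=T|_{\mathcal{H}_1}$. A one-line check gives that $\mathcal{H}_1\oplus\cdots\oplus\mathcal{H}_j$ is $T$-invariant for each $j$---it is invariant because $\mathcal{H}_1$ is and $\mathcal{H}_2\oplus\cdots\oplus\mathcal{H}_j$ is $\widetilde{T}$-invariant.

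To prove the Claim I would take a nowhere-vanishing holomorphic section $\gamma_1$ of $E_T$ (for instance an element of a global holomorphic frame, which exists because $E_T$ over the planar domain $\Omega$ is holomorphically trivial) and set $\mathcal{H}_1:=\bigvee\{\gamma_1(w):w\in\Omega\}$. Fixing $w_0\in\Omega$ and expanding in a Taylor series gives $\mathcal{H}_1=\bigvee\{\gamma_1^{(k)}(w_0):k\geq 0\}$, and differentiating $T\gamma_1(w)=w\gamma_1(w)$ gives $(T-w)\gamma_1^{(k)}(w)=k\,\gamma_1^{(k-1)}(w)$; hence $\mathcal{H}_1$ is $T$-invariant and $(T-w)|_{\mathcal{H}_1}$ has dense range. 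Applying powers of $T-w_0$ to a finite linear relation among the $\gamma_1^{(k)}(w_0)$ forces it to be trivial, so those vectors are linearly independent. What remains---and is the heart of the matter---is to show (i) $\mathcal{H}_1\cap\ker(T-w)=\mathbb{C}\gamma_1(w)$ for every $w\in\Omega$, and (ii) $(T-w)|_{\mathcal{H}_1}$ has closed, hence full, range in $\mathcal{H}_1$. For (ii) one uses that $T-w$ is bounded below on $\ker(T-w)^{\perp}$ in $\mathcal{H}$ and that $\ker(T-w)$ is finite-dimensional, via a positive-angle estimate between $\mathcal{H}_1$ and $\ker(T-w)$; this gives the surjectivity of $(T-w)|_{\mathcal{H}_1}$ regardless of (i). Together (i) and (ii) say $T|_{\mathcal{H}_1}\in B_1(\Omega)$.

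Given the Claim, the rest is largely bookkeeping. That $\widetilde{T}\in B_{n-1}(\Omega)$ is a kernel/cokernel count: $\widetilde{T}-w$ is onto $\mathcal{H}_1^{\perp}$ because $T-w$ is onto $\mathcal{H}$ and $\mathcal{H}_1$ is invariant; $P_{\mathcal{H}_1^{\perp}}$ carries $\ker(T-w)$ onto $\ker(\widetilde{T}-w)$ with kernel $\mathcal{H}_1\cap\ker(T-w)$, so $\dim\ker(\widetilde{T}-w)=n-1$; and $\bigvee_{w}\ker(\widetilde{T}-w)=\mathcal{H}_1^{\perp}$, since $P_{\mathcal{H}_1^{\perp}}$ maps the total set $\bigvee_{w}\ker(T-w)=\mathcal{H}$ onto a dense subspace of $\mathcal{H}_1^{\perp}$. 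Iterating, the top-left $j\times j$ corner $T|_{\mathcal{H}_1\oplus\cdots\oplus\mathcal{H}_j}$ lies in $B_j(\Omega)$ for each $j$. For \eqref{1.1}: given holomorphic frames $t_j$ of $E_{T_{j,j}}$, put $\gamma_1:=t_1$ (valid since $t_1\in\mathcal{H}_1$ and $Tt_1(w)=T_{1,1}t_1(w)=w\,t_1(w)$); then, inductively, the upper-triangular shape forces the $\mathcal{H}_j$-component and the $\mathcal{H}_i$-components ($i>j$) of $(T-w)t_j(w)$ to vanish, so $\eta_j(w):=-(T-w)t_j(w)$ is a holomorphic section into the $T$-invariant subspace $\mathcal{H}_1\oplus\cdots\oplus\mathcal{H}_{j-1}$, on which $T-w$ acts surjectively. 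Choosing a holomorphically varying solution $\psi_j(w)\in\mathcal{H}_1\oplus\cdots\oplus\mathcal{H}_{j-1}$ of $(T-w)\psi_j(w)=\eta_j(w)$---via the local holomorphic right inverse of Cowen--Douglas theory, patched over the Stein domain $\Omega$---I would set $\gamma_j(w):=\psi_j(w)+t_j(w)$ and $\phi_{i,j}(t_j)(w):=P_{\mathcal{H}_i}\psi_j(w)$, so that $\gamma_j(w)\in\ker(T-w)$ and \eqref{1.1} holds. The staircase shape of \eqref{1.1} gives pointwise linear independence of $\gamma_1,\dots,\gamma_n$, and $\bigvee_{w}\ker(T-w)=\mathcal{H}$ gives $\mathcal{H}=\bigvee\{\gamma_i(w):w\in\Omega,\ 1\leq i\leq n\}$.

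The step demanding real work---as opposed to formal manipulation---is the Claim, and within it the point that $T|_{\mathcal{H}_1}$ is genuinely in $B_1(\Omega)$ over \emph{all} of $\Omega$: in particular that $\mathcal{H}_1\cap\ker(T-w)$ is exactly one-dimensional at every point, not merely generically. One natural route is to secure (ii) first---which already makes $(T-w)|_{\mathcal{H}_1}$ Fredholm and surjective, so that $T|_{\mathcal{H}_1}\in B_d(\Omega)$ with $d:=\dim\ker(T|_{\mathcal{H}_1}-w)$ constant on $\Omega$---and then to rule out $d\geq 2$, using that all of $\mathcal{H}_1$ is generated by the single section $\gamma_1$. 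The other place where pure linear algebra does not suffice is the holomorphic solvability used for \eqref{1.1}, which rests on the triviality of holomorphic bundles over $\Omega$.
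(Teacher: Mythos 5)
First, a remark on scope: the paper does not prove this theorem at all --- it is quoted verbatim from the Jiang--Wang monograph \cite{JW} --- so there is no in-paper argument to compare yours against; I am judging your proposal on its own terms. The overall architecture you choose (peel off a $B_1(\Omega)$ invariant subspace, induct, then solve $(T-w)\psi_j(w)=-(T-w)t_j(w)$ holomorphically to produce the frame relations) is indeed the standard one, and the second half of your write-up --- the kernel/cokernel count for $\widetilde{T}$, the closed-range argument via finite-dimensional kernel plus boundedness below on $\ker(T-w)^{\perp}$, the constancy of $\dim(\mathcal{H}_1\cap\ker(T-w))$ by Fredholm index, and the sheaf-theoretic solvability over the Stein domain $\Omega$ --- is all correct modulo the Claim.

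The gap is in the Claim itself, and it is fatal as you propose to close it. You take $\mathcal{H}_1=\bigvee\{\gamma_1(w):w\in\Omega\}$ for an \emph{arbitrary} nowhere-vanishing section $\gamma_1$ (``for instance an element of a global holomorphic frame'') and propose to rule out $d:=\dim(\mathcal{H}_1\cap\ker(T-w))\geq 2$ ``using that all of $\mathcal{H}_1$ is generated by the single section $\gamma_1$.'' That principle is false. Take $T=S^*\oplus S^*$ on $H^2(\mathbb{D})\oplus H^2(\mathbb{D})$, let $c_w(z)=(1-wz)^{-1}$ span $\ker(S^*-w)$, and set $\gamma_1(w)=(c_w,\,g(w)c_w)$ where $g$ is holomorphic on $\mathbb{D}$ but \emph{not} in the Nevanlinna class. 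This is a nowhere-vanishing holomorphic section, and together with $\gamma_2(w)=(0,c_w)$ it forms a global holomorphic frame of $E_T$. Yet $(u,v)\perp\gamma_1(w)$ for all $w$ forces $u(\zeta)+\tilde g(\zeta)v(\zeta)=0$ with $\tilde g(\zeta)=\overline{g(\bar\zeta)}$, so a nonzero $v$ would exhibit $\tilde g=-u/v$ as a quotient of $H^2$ functions, i.e.\ a Nevanlinna function --- contradiction. Hence $\mathcal{H}_1=H^2\oplus H^2$ and $T|_{\mathcal{H}_1}=T\in B_2(\mathbb{D})$: a single section generates the whole space while $d=2$. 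So the theorem's real content is the existence of a \emph{well-chosen} section (or, equivalently, a well-chosen invariant subspace) for which the restriction has multiplicity one, and that existence is exactly what your sketch does not supply; no soft argument from ``singly generated'' will produce it. (A smaller looseness: your $\phi_{i,j}(t_j)(w)=P_{\mathcal{H}_i}\psi_j(w)$ need not land in $\ker(T_{i,i}-w)$, so these are holomorphic $\mathcal{H}_i$-valued maps rather than bundle maps into $E_{T_{i,i}}$; the theorem's own wording is equally loose here, so this is cosmetic.)
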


We would expect that these bundle maps might reflect geometric similarity invariants of the operator $T$. However, it is far from enough on the coarse
relation of above equations. In particular, in order to use geometric terms such as curvature and second fundamental form for similarity 
invariants of operator in $(1.1)$,  we have to give them more internal structures. For example, we may assume that $T_{i,i+1}$ are non zero and intertwines $T_{i,i}$ and $T_{i+1,i+1}$, that is, $T_{i,i}T_{i,i+1}=T_{i,i+1}T_{i+1,i+1}, 1\leq i\leq n-1$ (it is denoted by $\mathcal{FB}_n(\Omega)$ (see in \cite{JJKMCR})).

For a $2\times 2$ block 
$\begin{pmatrix}
T_{i,i}&T_{i,i+1}\\
0&T_{i+1,i+1}\\
\end{pmatrix}$, if  $T_{i,i}T_{i,i+1}=T_{i,i+1}T_{i+1,i+1},$  the corresponding second fundamental form $\theta_{i,i+1}$, which is obtained by R. G. Douglas and G. Misra (see in \cite{DM}), is 
\begin{equation} \label{sf}
\theta_{i,i+1}(T)(w) =  \frac{\mathcal K_{T_{i,i}}(w) \,d\bar{w}} {\big
(\frac{\|T_{i,i+1}(t_{i+1}(w))\|^2}{\|t_{i+1}(w)\|^2} - \mathcal
K_{T_{i,i}}(w)\big )^{1/2}}.\end{equation}

Let  $T,\widetilde{T}$ have upper-triangular representation as in $(1.1)$ and assume that $T_{i,i}, T_{i+1,i+1}$ and $\tilde{T}_{i,i}, \tilde{T}_{i+1,i+1}$ have intertwinings $T_{i,i+1}$ and $\tilde{T}_{i,i+1}$, respectively.  If $\mathcal{K}_{T_{i,i}}=\mathcal{K}_{\widetilde{T}_{i,i}}$, then from $(1.3)$ it is easy to see that $$\theta_{i,i+1}(T)(w) =\theta_{i,i+1}(\tilde{T)}(w) \Leftrightarrow \frac{\|T_{i,i+1}(t_{i+1}(w))\|^2}{\|t_{i+1}(w)\|^2}=\frac{\|\widetilde{T}_{i,i+1}(\widetilde{t}_{i+1}(w))\|^2}{\|\widetilde{t}_{i+1}(w)\|^2}.$$

If upper-triangular representation in $(1.1)$ has such a good internal structure, then a complete set of unitary (or similarity) invariants of $T$ can be obtained in terms of  the curvature and the second fundamental form. 

In this  paper, we  introduce a subset of Cowen-Douglas operators denoted by $\mathcal{CFB}_n(\Omega)$ (See Definition \ref{CFB1}). We prove that $\mathcal{CFB}_n(\Omega)$ is norm dense in 
$B_n(\Omega)$(See Proposition \ref{ND}). Hence it is meaningful to discuss the geometric similarity invariants for operators belong to $ \mathcal{CFB}_n(\Omega)$. 
We would like to point that similarity  results for quasi-homogeneous Cowen-Douglas operators have been discussed in \cite{JJM}. However, the class  $ \mathcal{CFB}_n(\Omega)$ is quite different from the class of quasi-homogeneous Cowen--Douglas operators. For example, if we take the operator $T$ in 
$ \mathcal{CFB}_n(\Omega)$ such that $T_{i,i}=T_{i+1,i+1}, 1\leq i\leq n-1$ or $T_{i,i}$ is not a homogeneous operator, then $T$ is not a quasi-homogeneous Cowen-Douglas operator. Also, if we take a homogeneous Cowen-Douglas operator $T$ (which is quasi-homogeneous by definition) in $B_n(\Omega)$ for $n\geq 3$, then $T$ does not belong to $ \mathcal{CFB}_n(\Omega)$.

%
%
%
%
%

Roughly speaking, for operators $T$ in $\mathcal{CFB}_n(\Omega)$, 
  the curvature and the second fundamental form give a complete set of similarity invariants. The authors joint with  G. Misra gave a complete set of unitary invariants in terms  the curvature and the second fundamental form for operators in $\mathcal{FB}_n(\Omega)$ (See in \cite{JJKM}).

In the general case, 
the first and the second authors obtained a complete set of similarity invariants by using ordered $K_0$-group. However, ordered $K_0$-group is an algebraic invariant. So one might expect these algebraic invariants may give some insight in search of geometric invariants.
%
%


Recently,  R. G. Douglas, H. Kwon, S. Treil \cite{KT, DKS} and K. Ji \cite{HJK} used curvature to describe similarity invariants for a subclass of Cowen-Douglas operators. 

\begin{thm}\cite{DKS, HJK}\label{DKS}
 Let $T \in B_n(\mathbb{D})$ be an $m$-hypercontraction and $S_z$ is the multiplication on the weighted Bergman space. 
Then  T is similar to  
$\bigoplus\limits^n_{i=1}S^*_{z}$
if and only if there exists a bounded subharmonic function 
 $\psi$  defined on
$\mathbb{D}$ such that $$\mbox{trace}\,(\mathcal{K}_T)-\mbox{trace}\,(\mathcal{K}_{S^*_z})=\Delta \psi.$$

\end{thm}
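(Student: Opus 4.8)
\noindent The plan is to convert the statement into a comparison between the Hermitian metric of $E_T$ and that of the model bundle --- for which, up to a holomorphic gauge, the curvature is responsible --- and then to invoke a $\dbar$-theoretic criterion that turns such a comparison into a genuine similarity. For the set-up, realise $\bigoplus_{i=1}^{n}S_z^{*}$ on $\bigoplus_{i=1}^{n}A^{2}_{\alpha}$, $A^{2}_{\alpha}$ being the weighted Bergman space for which $S_z$ is the extremal $m$-hypercontraction, and fix a holomorphic frame $\sigma=\setof{\sigma_1,\dots,\sigma_n}$ of the associated bundle; since that bundle is an orthogonal sum of $n$ copies of $E_{S_z^{*}}$, its metric is $h_{\sigma}(w)=b_{\alpha}(w)I_n$ with $b_{\alpha}(w)=\norm{k_w^{\alpha}}^{2}$, so $\det h_{\sigma}=b_{\alpha}^{\,n}$ and $\mathrm{trace}\,\mathcal{K}_{\bigoplus S_z^{*}}=n\,\mathcal{K}_{S_z^{*}}$. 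Fixing a holomorphic frame $\gamma$ of $E_T$ with metric $h_{\gamma}$ and taking the trace in the definition of the curvature (using Jacobi's formula), one gets $\mathrm{trace}\,\mathcal{K}_T-\mathrm{trace}\,\mathcal{K}_{\bigoplus S_z^{*}}=-\tfrac{\partial}{\partial\bar w}\tfrac{\partial}{\partial w}\log\!\bigl(\det h_{\gamma}/b_{\alpha}^{\,n}\bigr)$ in any holomorphic frame. The hypothesis that $T$ is an $m$-hypercontraction will be used twice: via the curvature inequality for $m$-hypercontractions, which fixes the sign of this curvature defect, and via the realisation theory of $m$-hypercontractions in $B_n(\mathbb{D})$, which gives an a~priori bound $h_{\gamma}(w)\le C\,b_{\alpha}(w)I_n$.

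\smallskip

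\noindent For necessity, suppose $X\in\mathcal{L}(\mathcal{H})$ is invertible with $TX=X\bigl(\bigoplus_{i=1}^{n}S_z^{*}\bigr)$. Then $X$ carries $\ker(\bigoplus S_z^{*}-w)$ onto $\ker(T-w)$, so $\setof{X\sigma_1,\dots,X\sigma_n}$ is a holomorphic frame of $E_T$ with metric $h_{\gamma}(w)=\bigl(\!\bigl(\la X\sigma_j(w),X\sigma_i(w)\ra\bigr)\!\bigr)_{i,j}$; writing $\widehat\sigma_i(w)=\sigma_i(w)/\sqrt{b_{\alpha}(w)}$ for an orthonormal basis of the model fibre at $w$, one obtains $b_{\alpha}(w)^{-n}\det h_{\gamma}(w)=\det\bigl(\!\bigl(\la X\widehat\sigma_j(w),X\widehat\sigma_i(w)\ra\bigr)\!\bigr)$, a Gram determinant lying in $[\norm{X^{-1}}^{-2n},\norm{X}^{2n}]$. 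Hence the function $\psi$, a fixed scalar multiple of $\log\bigl(\det h_{\gamma}/b_{\alpha}^{\,n}\bigr)$ normalised so that $\Delta\psi=\mathrm{trace}\,\mathcal{K}_T-\mathrm{trace}\,\mathcal{K}_{\bigoplus S_z^{*}}$, is bounded on $\mathbb{D}$, and it is subharmonic because the curvature inequality for $m$-hypercontractions forces this curvature defect to be a density of the requisite sign.

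\smallskip

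\noindent For sufficiency, given a bounded subharmonic $\psi$ with $\mathrm{trace}\,\mathcal{K}_T-\mathrm{trace}\,\mathcal{K}_{\bigoplus S_z^{*}}=\Delta\psi$, the identity above shows $\tfrac{\partial}{\partial\bar w}\tfrac{\partial}{\partial w}\bigl(\log(\det h_{\gamma}/b_{\alpha}^{\,n})+c\,\psi\bigr)=0$ for the appropriate constant $c$, so on the simply connected disk the bracketed real-analytic function is harmonic, hence of the form $\log|g|^{2}$ with $g$ zero-free holomorphic; thus $\det h_{\gamma}(w)=b_{\alpha}(w)^{n}|g(w)|^{2}e^{-c\psi(w)}$. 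Boundedness of $\psi$ gives $\det\bigl(b_{\alpha}(w)^{-1}h_{\gamma}(w)\bigr)\asymp|g(w)|^{2}$, and combining this with $h_{\gamma}(w)\le C\,b_{\alpha}(w)I_n$ pins all eigenvalues of $b_{\alpha}(w)^{-1}h_{\gamma}(w)$ into a fixed positive interval, i.e.\ $c_1\,b_{\alpha}(w)I_n\le h_{\gamma}(w)\le C\,b_{\alpha}(w)I_n$. From this metric comparison one produces the desired invertible $X$ with $TX=X\bigl(\bigoplus S_z^{*}\bigr)$ by the argument of Douglas--Kwon--Treil \cite{DKS} for $n=1$ and of Ji \cite{HJK} for general $n$: realising $E_T$ and the model bundle inside reproducing kernel Hilbert spaces, $X$ is constructed by solving a $\dbar$-equation whose data is controlled by the Riesz measure $\Delta\psi$, which --- $\psi$ being bounded and subharmonic on $\mathbb{D}$ --- is a Carleson measure for $A^{2}_{\alpha}$; the Carleson estimate yields boundedness of $X$ and $X^{-1}$, and the two-sided metric comparison yields invertibility.

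\smallskip

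\noindent The main obstacle is precisely this last step of the sufficiency part --- passing from the pointwise (``local'') metric comparison $h_{\gamma}\asymp b_{\alpha}I_n$ to the (``global'') similarity of operators. This is where the rigidity of the unitary case (Theorem~\ref{CDT1}) breaks down, since a holomorphic bundle map need not extend to a bounded operator; one must instead solve $\dbar$ with uniform bounds, the crucial input being that boundedness together with subharmonicity of $\psi$ makes $\Delta\psi$ a Carleson measure. For $n>1$ there is the further difficulty that the curvature trace controls only $\det h_{\gamma}$, so recovering the matrix metric $h_{\gamma}$ --- hence $T$ up to similarity --- requires both the structural a~priori bound on $m$-hypercontractions in $B_n(\mathbb{D})$ and the matrix-valued form of the $\dbar$-estimates.
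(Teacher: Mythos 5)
First, note that the paper offers no proof of this statement: it is quoted, with citations, from \cite{DKS} and \cite{HJK}, so there is no in-paper argument to compare yours against. (The statement as printed also carries a typo: the right-hand side should be $\mbox{trace}\,(\mathcal{K}_{\bigoplus_{i=1}^n S_z^*})=n\,\mathcal{K}_{S_z^*}$, as you correctly use; with a single copy of $\mathcal{K}_{S_z^*}$ the identity already fails for $T=\bigoplus_{i=1}^n S_z^*$ itself when $n\ge 2$.) Your necessity direction is essentially complete and is the standard argument: the invertible intertwiner transports the model frame, the Gram-determinant computation traps $\log\bigl(\det h_\gamma/b_\alpha^{\,n}\bigr)$ in a bounded interval, and the hypercontraction curvature inequality supplies the sign. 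Do, however, watch the signs: with $\mathcal{K}=-\partial_{\bar w}\partial_w\log\det h_\gamma$ and $\mbox{trace}\,\mathcal{K}_T\le n\mathcal{K}_{S_z^*}$, the function $\log\bigl(\det h_\gamma/b_\alpha^{\,n}\bigr)$ is the \emph{subharmonic} one, so the bounded subharmonic $\psi$ satisfies $\Delta\psi=n\mathcal{K}_{S_z^*}-\mbox{trace}\,\mathcal{K}_T$; the order displayed in the theorem is inconsistent with the sign analysis in the remark following Proposition \ref{sim}, and your "fixed scalar multiple" must be positive of $\log\bigl(\det h_\gamma/b_\alpha^{\,n}\bigr)$, not of its negative.

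The genuine gap is in sufficiency, and it is exactly the step you flag: everything after "one produces the desired invertible $X$ by the argument of Douglas--Kwon--Treil and Ji" is a citation of the theorem being proved rather than a proof of it. The two-sided pointwise comparison $c_1 b_\alpha I_n\le h_\gamma\le C b_\alpha I_n$ does not by itself yield similarity --- a pointwise-bounded holomorphic bundle map need not extend to a bounded operator --- and the entire analytic content of \cite{KT}, \cite{DKS} and \cite{HJK} is the construction bridging this, via the model decomposition $E_T\cong E_{S_z^*}\otimes\mathcal{E}$, analytic projections, and a $\bar\partial$-equation solved with estimates coming from the Carleson property of the Riesz measure of a bounded subharmonic function. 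Two smaller points also need repair before that machinery applies: (i) the zero-free factor $g$ in $\det h_\gamma=b_\alpha^{\,n}|g|^2e^{-c\psi}$ need not be bounded below, so it must first be absorbed into the holomorphic frame (possible on $\mathbb{D}$ since $g^{1/n}$ exists there) before you may conclude $\det\bigl(b_\alpha^{-1}h_\gamma\bigr)\asymp 1$; and (ii) the a priori bound $h_\gamma\le Cb_\alpha I_n$ is frame-dependent and has to be extracted from the $m$-hypercontraction model (realizing $T$ as a compression of the vector-valued weighted Bergman shift to a co-invariant subspace), which you assert but do not derive. As a roadmap of the cited proofs your outline is accurate; as a self-contained proof it is not.
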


 An operator $T$ is said to be homogeneous if $\phi_{\alpha}(T)$ is unitarily equivalent to
$T$ for each  M\"{o}bius transformation $\phi_{\alpha}$. 
G. Misra   proved the following theorem:
\begin{thm}[\cite{KM}]
Let $T_1$ and $T_2$ be two homogeneous Cowen-Douglas
operators with index one. Then $T_1$ is similar to $T_2$ if and only if $T_1$ is unitarily equivalent to $T_2$, that is, $\mathcal{K}_{T_1}=\mathcal{K}_{T_2}$. 
\end{thm}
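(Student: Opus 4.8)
The plan is to exploit the classification of homogeneous operators in $B_1(\mathbb{D})$, which are known explicitly: up to unitary equivalence they are precisely the adjoints $M_z^*$ acting on the reproducing kernel Hilbert spaces with kernel $(1-z\bar{w})^{-\lambda}$ for $\lambda>0$, equivalently the weighted Bergman-type spaces $A^{(\lambda)}(\mathbb{D})$. Their curvature at the origin (or equivalently, the full curvature function, by homogeneity and M\"obius covariance) is the complete unitary invariant by Theorem \ref{CDT1}, and one computes $\mathcal{K}_{T_\lambda}(w) = -\lambda(1-|w|^2)^{-2}$. So two homogeneous operators $T_1=T_{\lambda_1}$ and $T_2=T_{\lambda_2}$ are unitarily equivalent iff $\lambda_1=\lambda_2$. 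Since unitary equivalence trivially implies similarity, the only thing to prove is: if $T_{\lambda_1}\sim_s T_{\lambda_2}$ then $\lambda_1=\lambda_2$.

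First I would reduce the similarity statement to an estimate on curvatures. If $X$ is an invertible operator with $XT_{\lambda_1}=T_{\lambda_2}X$, then $X$ induces an invertible holomorphic bundle map $E_{T_{\lambda_1}}\to E_{T_{\lambda_2}}$, and standard Cowen--Douglas theory gives two-sided bounds: there are constants $c,C>0$ with $c\,h_1(w)\le h_2(w)\le C\,h_1(w)$ for the (scalar) metrics, hence $c\le \frac{\|\gamma_2(w)\|^2}{\|\gamma_1(w)\|^2}\le C$ after a suitable choice of frames. Writing $\gamma_2=\varphi\,\gamma_1$ for a nowhere-vanishing holomorphic $\varphi$, this says $\log\frac{h_2}{h_1} = \log|\varphi|^2 + $ bounded, and since $\log|\varphi|^2$ is harmonic, $\log(h_2/h_1)$ differs from a harmonic function by a bounded quantity. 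Taking $\Delta\log$ kills the harmonic part and yields $\mathcal{K}_{T_{\lambda_1}}-\mathcal{K}_{T_{\lambda_2}} = \Delta u$ for some bounded function $u$ on $\mathbb{D}$. Plugging in the explicit curvatures, $(\lambda_2-\lambda_1)(1-|w|^2)^{-2} = \Delta u$ with $u$ bounded. This is exactly the obstruction addressed by the trace-condition circle of ideas around Theorem \ref{DKS}: $(1-|w|^2)^{-2}$ is the Laplacian of $-\log(1-|w|^2)$, which is \emph{not} bounded, and one shows no bounded solution $u$ can absorb a nonzero multiple of it (e.g. by integrating against the Green's function of $\mathbb{D}$, or by a maximum-principle/sub-mean-value argument comparing growth near $\partial\mathbb{D}$). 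Hence $\lambda_1=\lambda_2$.

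The main obstacle — and the step I would spend the most care on — is making the passage ``$X$ invertible $\Rightarrow$ $\mathcal{K}_{T_1}-\mathcal{K}_{T_2}=\Delta(\text{bounded})$'' fully rigorous, i.e. verifying that the comparison of metrics is genuinely two-sided and global on all of $\mathbb{D}$ (not just locally), which uses boundedness of both $X$ and $X^{-1}$ together with the fact that the frames can be taken with norms controlled by $X$. A clean alternative, which I would present if the above growth argument gets technical, is to invoke homogeneity directly: the M\"obius group acts transitively, and a similarity between two homogeneous operators can be averaged over the group to produce a \emph{unitary} intertwiner (a Schur-type argument: the positive operator $(X^*X)$ intertwines $T_{\lambda_1}$ with itself up to the group action, and irreducibility forces it to be scalar), giving unitary equivalence outright. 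Either route closes the proof; I expect the curvature-estimate route to dovetail most naturally with the geometric theme of the paper and with Theorem \ref{DKS}.
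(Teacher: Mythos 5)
The paper does not prove this statement; it is quoted from \cite{KM} as background, so there is no in-house argument to compare against. Judged on its own, your main route is correct and, pleasantly, runs on machinery this paper itself builds. Two small repairs: (i) you do not need the full classification of homogeneous operators, only the fact that homogeneity forces $\mathcal{K}_T(w)=\mathcal{K}_T(\phi(w))|\phi'(w)|^2$ for all M\"obius $\phi$, hence $\mathcal{K}_T(w)=\mathcal{K}_T(0)(1-|w|^2)^{-2}=-\lambda(1-|w|^2)^{-2}$; (ii) the sentence ``writing $\gamma_2=\varphi\gamma_1$'' conflates frames of two different bundles --- the clean statement is that $X\gamma_1$ \emph{is} a frame of $E_{T_2}$, so with $u=\log\bigl(\|X\gamma_1\|^2/\|\gamma_1\|^2\bigr)$, which is bounded above and below because both $X$ and $X^{-1}$ are bounded, one gets $\mathcal{K}_{T_1}-\mathcal{K}_{T_2}=\frac{\partial^2}{\partial w\partial\overline{w}}u$. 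This is exactly the necessity direction of Proposition \ref{sim}(2), where $\ln\Psi$ is bounded since $1\le\Psi\le 1+\|\Phi\|^2$. Your growth argument then does close: $v:=u+(\lambda_2-\lambda_1)\log(1-|w|^2)$ is harmonic, and the mean value property at the origin gives $v(0)=O(1)+(\lambda_2-\lambda_1)\log(1-r^2)$ for every $r<1$, forcing $\lambda_1=\lambda_2$. A still more elementary route is already latent in Corollary \ref{HMPH}: $T_\lambda$ is a weighted backward shift with $\prod_{k=0}^{n}w_k^{(\lambda)}\sim(n+1)^{(1-\lambda)/2}$, and Shields' similarity criterion for injective weighted shifts requires the ratio of partial products, here $\sim(n+1)^{(\lambda_1-\lambda_2)/2}$, to be bounded above and below; that happens only when $\lambda_1=\lambda_2$.

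Your fallback argument, however, is not a proof as stated. Averaging over the M\"obius group is obstructed by the noncompactness (indeed nonamenability) of $PSU(1,1)$, and the claim that $X^*X$ ``intertwines $T_{\lambda_1}$ with itself up to the group action'' so that irreducibility forces it to be scalar presupposes a precise relation between $X$ and the two associated projective unitary representations, plus a one-dimensionality statement for the intertwiner space, none of which you establish. Keep the curvature-estimate route as the proof and either delete the averaging remark or develop it separately.
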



The first and the second authors jointly with G. Misra extended the concepts of homogeneous operators to quasi-homogeneous operators as follows.

\begin{defn}[\cite{JJM}]Let $T\in \mathcal{FB}_n(\Omega)$ and $T$ has an $n\times n$ upper-triangular matrix in $(1.1)$.  Then the operator $T$ is called a quasi-homogeneous operator, i.e. $T\in \mathcal{QB}_n(\Omega)$, 
if  each $T_{i,i}$ is a  homogeneous operator in $B_1(\Omega)$  and $$T_{i,j}(t_j)\in \bigvee\{t^{(k)}_i, k\leq j-i-1\}.$$

\end{defn}

For the quasi-homogeneous operators,  the curvature and the second fundamental form completely describe similarity invariants.  

\begin{thm}[\cite{JJM}]
Let $T, S\in \mathcal{QB}_n(\Omega)$, then
\begin{equation}\nonumber 
  \left\{\begin{array}{lll}
\mathcal{K}_{T_{i,i}}=\mathcal{K}_{\widetilde{T}_{i,i}}\\
 \theta_{i,i+1}(T)=\theta_{i,i+1}(\widetilde{T})\\
   \end{array}
\right. \Longrightarrow T\sim_s\widetilde{T} ~~\mbox{if and only if}~~ T=\widetilde{T}.
 \end{equation}

\end{thm}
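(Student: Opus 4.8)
We dispose first of the elementary implication. If $T=\widetilde{T}$ then the identity operator is an invertible intertwiner, so $T\sim_s\widetilde{T}$; this is the ``if'' direction. The whole content lies in the reverse implication: under the standing hypotheses $\mathcal{K}_{T_{i,i}}=\mathcal{K}_{\widetilde{T}_{i,i}}$ and $\theta_{i,i+1}(T)=\theta_{i,i+1}(\widetilde{T})$ for all $i$, one must deduce from $T\sim_s\widetilde{T}$ that in fact $T=\widetilde{T}$. The plan is to match the two operators one super-diagonal at a time. For the diagonal itself, each pair $T_{i,i},\widetilde{T}_{i,i}$ consists of homogeneous operators in $B_1(\Omega)$, and by the index-one case of Theorem \ref{CDT1} the assumption $\mathcal{K}_{T_{i,i}}=\mathcal{K}_{\widetilde{T}_{i,i}}$ already gives $T_{i,i}\sim_u\widetilde{T}_{i,i}$. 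Since a homogeneous operator in $B_1(\Omega)$ is determined up to unitary equivalence by its curvature and carries a canonical reproducing-kernel frame, I would conjugate $T$ by a block-diagonal unitary and normalise frames so that $t_i=\widetilde{t}_i$ and $T_{i,i}=\widetilde{T}_{i,i}$ for every $i$; this preserves membership in $\mathcal{QB}_n(\Omega)$ and preserves $T\sim_s\widetilde{T}$, reducing everything to matching the strictly upper-triangular entries. The residual freedom left by this normalisation is exactly a scalar phase on each block, because a homogeneous $B_1(\Omega)$ operator is irreducible.

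Next I would treat the first super-diagonal. Since $\mathcal{K}_{T_{i,i}}=\mathcal{K}_{\widetilde{T}_{i,i}}$, formula \eqref{sf} shows, exactly as observed in the introduction, that $\theta_{i,i+1}(T)=\theta_{i,i+1}(\widetilde{T})$ is equivalent to the equality of norm ratios $\|T_{i,i+1}(t_{i+1})\|^{2}/\|t_{i+1}\|^{2}=\|\widetilde{T}_{i,i+1}(t_{i+1})\|^{2}/\|t_{i+1}\|^{2}$. The quasi-homogeneity condition with $j=i+1$ gives $T_{i,i+1}(t_{i+1})\in\bigvee\{t_i\}$, so $T_{i,i+1}(t_{i+1})=c_i\,t_i$ and $\widetilde{T}_{i,i+1}(t_{i+1})=\widetilde{c}_i\,t_i$, where homogeneity forces $c_i,\widetilde{c}_i$ to be constants. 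The norm-ratio equality yields $|c_i|=|\widetilde{c}_i|$, and spending the scalar phase freedom recorded above I would arrange $c_i=\widetilde{c}_i$, hence $T_{i,i+1}=\widetilde{T}_{i,i+1}$ for all $i$.

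It then remains to match the entries $T_{i,j}$ with $d:=j-i\ge 2$, and this is where the similarity hypothesis enters. By quasi-homogeneity each such entry is encoded by finitely many constants through $T_{i,j}(t_j)=\sum_{k\le j-i-1}c^{(i,j)}_k\,t_i^{(k)}$, and likewise for $\widetilde{T}$. Choosing an invertible $X$ with $XT=\widetilde{T}X$ (which induces a holomorphic bundle isomorphism $E_T\to E_{\widetilde{T}}$), I would argue by induction on $d$: assuming all entries with smaller $d$ already shown equal, I read off the $d$-th super-diagonal of the operator identity $XT=\widetilde{T}X$. The homogeneity constraints collapse this into a finite linear relation among the constants $c^{(i,j)}_k$ and $\widetilde{c}^{(i,j)}_k$, whose only solution compatible with the invertibility of $X$ is $c^{(i,j)}_k=\widetilde{c}^{(i,j)}_k$. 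Completing the induction gives $T_{i,j}=\widetilde{T}_{i,j}$ for all $i<j$, whence $T=\widetilde{T}$.

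The main obstacle is precisely this last step. The intertwiner $X$ need not be upper-triangular with respect to the flag $\mathcal{H}_1\subset\mathcal{H}_1\oplus\mathcal{H}_2\subset\cdots$, because when two diagonal blocks coincide as homogeneous operators the corresponding lower corner of $X$ may be a nonzero self-intertwiner of that block. One therefore cannot simply read triangular block entries off $XT=\widetilde{T}X$; instead the argument must control these off-triangular pieces, combining the rigidity of intertwiners between the (now identical) homogeneous diagonal bundles with the span restrictions imposed by quasi-homogeneity, and show that their contribution to each super-diagonal of $XT-\widetilde{T}X$ cancels only when the coupling constants agree. Organising this cancellation order by order in $d$, while keeping track of the phases already spent on the first super-diagonal, is the technical heart of the proof.
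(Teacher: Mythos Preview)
The paper does not prove this theorem; it is quoted in the introduction as a result from \cite{JJM} (Jiang--Ji--Misra, \emph{Classification of quasi-homogeneous holomorphic curves and operators in the Cowen-Douglas class}), with no proof supplied here. There is therefore no ``paper's own proof'' against which to compare your proposal. That said, two comments on your sketch are worth making.

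First, the obstacle you single out at the end --- that the invertible intertwiner $X$ need not be upper-triangular with respect to the flag --- is in fact removed by a result quoted in this very paper as Proposition~\ref{pjjkm} (from \cite{JJKM}): any invertible operator intertwining two operators in $\mathcal{FB}_n(\Omega)$ is automatically upper-triangular, and $\mathcal{QB}_n(\Omega)\subset\mathcal{FB}_n(\Omega)$ by definition. So the ``off-triangular pieces'' you worry about do not arise, and the block structure of $X$ aligns with that of $T$ and $\widetilde{T}$ from the outset. This simplifies your inductive scheme considerably.

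Second, even with $X$ upper-triangular, your inductive step for $d\ge 2$ is only a hope, not an argument. The claim that ``homogeneity constraints collapse this into a finite linear relation whose only solution compatible with the invertibility of $X$ is $c^{(i,j)}_k=\widetilde{c}^{(i,j)}_k$'' is exactly the substantive content of the theorem in \cite{JJM}, and carrying it out requires a detailed analysis of the intertwiners between the specific homogeneous $B_1(\Omega)$ blocks (which are concrete weighted shifts) together with the span restrictions $T_{i,j}(t_j)\in\bigvee\{t_i^{(k)}:k\le j-i-1\}$. Your proposal locates the difficulty correctly but does not resolve it; for the actual mechanism you would need to consult \cite{JJM}.
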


We point out that even if $T$ is a Cowen-Douglas
operator with index one, its spectral picture is also very complicated. The following theorem is due to D. A. Herreor shows its complexity. 

\begin{thm}[\cite{Her}]
Let $T\in \mathcal{L}(\mathcal{H})$ be a quasi-triangular operator with connected spectral picture. If there exists a point $w$ in the Fredholm domain of $T$ such that 
$\mbox{ind}\,(T-w)=1$. Then for any $\epsilon>0$, there exists a compact operator $K$ with $\|K\|<\epsilon$ such that $T+K$ is a Cowen-Douglas operator with index one.
\end{thm}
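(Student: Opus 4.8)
The plan is to reduce the statement to the classical theorem of Apostol--Foia\c{s}--Voiculescu (the ``quasitriangular'' characterization, $\mathcal{L}(\mathcal H)\setminus(\mathrm{QT})$ being an open set and the relevant index condition), together with a small perturbation argument tailored to the Cowen--Douglas class with index one. First I would recall that a bounded operator $T$ on $\mathcal H$ is quasitriangular if and only if there is no point $\mu$ in its Fredholm domain with $\mathrm{ind}\,(T-\mu)<0$; under our hypothesis $T$ is already quasitriangular and, additionally, there is a point $w$ with $\mathrm{ind}\,(T-w)=1>0$. The goal is to produce a compact $K$ of small norm with $T+K\in B_1(\mathbb D)$ (after a conformal relabeling, we may as well speak of an open disk rather than $\mathbb D$ itself), i.e.\ after perturbation the operator $T+K$ must become surjective on an open connected set $\Omega$, have one-dimensional kernel there, and satisfy the spanning condition $\bigvee_{w\in\Omega}\ker(T+K-w)=\mathcal H$.

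The key steps, in order: (1) Use the connectedness of the spectral picture and the existence of $w$ with $\mathrm{ind}\,(T-w)=1$ to locate an open disk $\Omega$ inside $\{\mu:\mathrm{ind}\,(T-\mu)=1\}$ contained in the semi-Fredholm domain; there $T-\mu$ is already onto with one-dimensional kernel. (2) The obstruction to $T$ lying in $B_1(\Omega)$ is then solely the failure of the spanning condition $\bigvee_{w\in\Omega}\ker(T-w)=\mathcal H$; write $\mathcal H_0$ for the (closure of the) span of these kernels, which is invariant and on which $T$ restricts to an operator in $B_1(\Omega)$, and write the matrix of $T$ with respect to $\mathcal H=\mathcal H_0\oplus\mathcal H_0^\perp$ as an upper-triangular $2\times2$ block $\left(\begin{smallmatrix}A&B\\0&C\end{smallmatrix}\right)$, where $A\in B_1(\Omega)$ and $C=T|_{\mathcal H_0^\perp}$ compressed. (3) Since $C$ inherits quasitriangularity and the index data that now excludes index $1$ on $\Omega$ (all the ``new'' kernel has been split off into $\mathcal H_0$), a standard application of the Apostol--Foia\c{s}--Voiculescu machinery (or the Berg-type ``staircase'' technique) yields, for any $\varepsilon>0$, a compact $K_0$ on $\mathcal H_0^\perp$ with $\|K_0\|<\varepsilon$ so that $C+K_0$ has no eigenvalue in $\Omega$ and, more strongly, so that $\mathcal H_0$ becomes cyclic-generating for the perturbed operator; pulling $B$ down and $K_0$ in, set $K=\left(\begin{smallmatrix}0&0\\0&K_0\end{smallmatrix}\right)$ plus if needed a small compact ``filling'' of the $B$-corner to force $\ker(T+K-w)$ to spill into $\mathcal H_0^\perp$ and span everything. (4) Verify that $T+K$ is still surjective with one-dimensional kernel on a possibly slightly shrunk subdisk (small-norm perturbation preserves semi-Fredholmness and index), and that the spanning condition now holds, so $T+K\in B_1$ of that disk.

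The main obstacle I anticipate is step (3)--(4): arranging the perturbation so that it simultaneously (a) is compact and small in norm, (b) destroys any spurious eigenvalues of $C$ in $\Omega$ without creating index-$(-1)$ points (which would destroy quasitriangularity and hence surjectivity), and (c) makes $\mathcal H_0$ actually generate the whole space under $T+K$, i.e.\ recovers the spanning axiom. Each of these is individually classical, but threading all three through a single compact perturbation of controlled norm is the delicate part; the cleanest route is probably to invoke the similarity-orbit / approximation results of Apostol--Fialkow--Herrero--Voiculescu already cited in the paper (the reference \cite{SO}) to get a model operator in the norm-closure of $B_1$-operators that is a small compact perturbation of $T$, and then to note that the Cowen--Douglas class with index one is relatively open inside its own norm-closure among quasitriangular operators with the right index, so a further arbitrarily small compact adjustment lands exactly in $B_1$ of an open disk. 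A conformal change of variable at the end identifies that disk with $\mathbb D$, completing the argument.
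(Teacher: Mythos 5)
First, a point of comparison: the paper does not prove this statement at all --- it is quoted verbatim from Herrero \cite{Her} as a known external result, so there is no internal proof to measure your attempt against. Judged on its own terms, your outline has two genuine gaps. The first is in steps (1)--(2): from $\mathrm{ind}\,(T-\mu)=1$ you conclude that $T-\mu$ ``is already onto with one-dimensional kernel'' on the index-one component, and hence that the only obstruction to membership in $B_1(\Omega)$ is the spanning axiom. This is false: the index only gives $\dim\ker(T-\mu)-\dim\mathrm{coker}\,(T-\mu)=1$. For example, $T=S^*\oplus S\oplus S^*$ (backward, forward, backward shifts) is quasitriangular with connected spectral picture and $\mathrm{ind}\,(T-\mu)=1$ on all of $\mathbb{D}$, yet $\dim\ker(T-\mu)=2$ and $T-\mu$ is not surjective for any $\mu\in\mathbb{D}$. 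Consequently your block decomposition does not produce $A\in B_1(\Omega)$ in the upper corner, and the compact perturbation has to do more than restore spanning: it must also kill the cokernel and cut the kernel down to dimension one everywhere on a disk, simultaneously and with small norm. This is a large part of what Herrero's argument actually accomplishes.

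The second gap is the fallback in steps (3)--(4). The similarity orbit theorem \cite{SO} produces conjugates $X_nTX_n^{-1}$ converging in norm to a model operator; these are similar to $T$ but are not compact perturbations of $T$, so it does not by itself deliver the required $K\in\mathcal{K}(\mathcal{H})$ with $\|K\|<\epsilon$ --- passing from norm approximation to a small \emph{compact} perturbation requires Voiculescu's theorem and the Apostol triangular representation, which is where the real work lies. Worse, your closing claim that ``the Cowen--Douglas class with index one is relatively open inside its own norm-closure'' is unjustified and essentially circular: the spanning condition $\bigvee_{w\in\Omega}\ker(T-w)=\mathcal{H}$ is not stable under small perturbations, and the assertion that an operator in the norm-closure of $B_1$ can be moved into $B_1$ by an arbitrarily small compact perturbation is precisely (the hard direction of) the theorem you are trying to prove. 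As it stands, the proposal identifies the right toolbox but defers the entire content of the theorem to unproved reductions.
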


It is due to the complexity of structure of Cowen-Douglas operators and the fact that invertible operator is not an isometric bundle map.  Therefore for any two Cowen-Douglas operators $T$ and $\tilde{T}$ with index one,  to understand similarity between $T$ and $\tilde{T}$, we have to further explore the relation between $\mathcal{K}_T$ and $\mathcal{K}_{\tilde{T}}$.

We now summarize the content of this paper. In section 2, we introduce a subclass of Cowen-Douglas operators denoted by ${\mathcal{CFB}_n(\Omega)}$. We prove this class of operators are norm dense in the set of all of the Cowen-Douglas operators.  In section 3, we study an important property named by Property $(H)$. In section 4,  we  show that the curvature and the second fundamental form completely characterize the similarity invariants for all Cowen-Douglas operators in $\mathcal{CFB}_n(\Omega)$. In section 5, we characterize a class of weakly homogeneous operator in $B_n(\mathbb{D})$. We also construct uncountably many   strongly irreducible operators (non similar) in $B_n(\mathbb{D})$ from a given operator in $B_1(\mathbb{D})$.

\newpage

\section{The operator class $\mathcal{CFB}_n(\Omega)$}

In this section, we introduce  a subclass of class of Cowen-Douglas operators which is denoted as $\mathcal{CFB}_n(\Omega)$.  We show that $\mathcal{CFB}_n(\Omega)$ is norm dense in $B_n(\Omega)$. We first recall the definition of the subclass $\mathcal{FB}_n(\Omega)$ of $B_n(\Omega)$. This class has been studied in detail in \cite{JJKM}.

\begin{defn} $\mathcal{FB}_n(\Omega)$ is  the set of all bounded linear operators $T$ defined on some complex separable Hilbert space $\mathcal H = \mathcal H_0 \oplus \cdots \oplus \mathcal H_{n-1},$ which are of the form
$$T=\left ( \begin{matrix}
T_{1,1} & T_{1,2} &\cdots&T_{1,n}\\
&T_{2,2}&\cdots&T_{2,n} \\
&&\ddots&\vdots\\
&&&T_{n,n}\\
\end{matrix} \right )$$
where the operator $T_{i,i}:\mathcal H_i \to \mathcal H_i,$ defined on a complex separable Hilbert space $\mathcal H_i,$ $1\leq i \leq n,$ is assumed to be in $B_1(\Omega)$ and  $T_{i,i+1}:\mathcal H_{i+1} \to \mathcal H_i,$ is assumed to be a non-zero intertwining bounded operator, namely, $T_{i,i}T_{i,i+1}=T_{i,i+1}T_{i+1,i+1},$  $1\leq i \leq n-1$.
\end{defn}

To define the class $\mathcal{CFB}_n(\Omega)$, we need following definitions.

\begin{defn} Let $T_1$ and $T_2$ be bounded linear operators on $\mathcal{H}$. Rosenblum operators  $\tau_{T_1,T_2}$ and $\delta_{T_1}$ are maps from $\mathcal{L}(\mathcal{H})$ to $\mathcal{L}(\mathcal{H})$ defined as follows:

$$\tau_{T_1,T_2}(X)=T_{1}X-XT_{2} $$ 

and

$$\delta_{T_1}(X)=T_{1}X-XT_{1}$$

where $X$ is a bounded linear operator on $\mathcal{H}$. 
\end{defn}

\begin{defn}\label{ph}({\bf{Property (H)}})
 Let $T_1$ and $T_2$ be bounded linear operators on $\mathcal{H}$. We say that $T_1,T_2$ satisfy the \textit{Property $(H)$} if  the following condition holds:
if $X$ is a bounded linear operator defined on $\mathcal{H}$ such that  
$T_{1}X=XT_{2}$ and 
$X=T_{1}Z-ZT_{2},\; \mbox{for some}\; Z$ in $\mathcal{L}(\mathcal{H})$, then $X=0$.


\end{defn}

\begin{rem}
Let $T$ be an operator in $B_1(\Omega)$. If we take $T_1=T_2=T$, then from \cite{JJKM} it follows that $T_1,T_2$ satisfy the Property (H). 
\end{rem}
 
 We recall that $\{T\}^{\prime}$ denotes the commutant, that is, $\{T\}^{\prime}$ is the set of all of bounded linear operators which commute with $T$.
 
\begin{defn} Let $T$ be a bounded linear operator on $\mathcal{H}$. We say that $T$ is strongly irreducible if there is no non trivial idempotent in $\{T\}^{\prime}.$

\end{defn}

\begin{lem}\label{SPH}Let $T_1$ and $T_2$ be bounded linear operators on $\mathcal{H}$. Suppose that $S_1$ and $S_2$ are similar to $T_1$ and $T_2$, respectively. If $T_1, T_2$ satisfy the Property $(H)$, then $S_1,S_2$ also satisfy the Property $(H)$.

\end{lem}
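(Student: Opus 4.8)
The plan is to unwind the similarity hypothesis and transport the Property $(H)$ condition along the invertible intertwiners. Write $S_1 = P T_1 P^{-1}$ and $S_2 = Q T_2 Q^{-1}$ for invertible operators $P, Q \in \mathcal{L}(\mathcal{H})$. We must show: if $Y$ is a bounded operator with $S_1 Y = Y S_2$ and $Y = S_1 W - W S_2$ for some $W \in \mathcal{L}(\mathcal{H})$, then $Y = 0$. The natural move is to pull $Y$ and $W$ back to operators adapted to $T_1, T_2$, namely set $X := P^{-1} Y Q$ and $Z := P^{-1} W Q$.

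First I would check that the intertwining relation is preserved: from $S_1 Y = Y S_2$, substituting $S_1 = P T_1 P^{-1}$ and $S_2 = Q T_2 Q^{-1}$ and multiplying on the left by $P^{-1}$ and on the right by $Q$, one gets $T_1 (P^{-1} Y Q) = (P^{-1} Y Q) T_2$, i.e. $T_1 X = X T_2$. Next, from $Y = S_1 W - W S_2 = P T_1 P^{-1} W - W Q T_2 Q^{-1}$, again conjugating by $P^{-1}(\cdot) Q$ gives
\[
P^{-1} Y Q = T_1 (P^{-1} W Q) - (P^{-1} W Q) T_2,
\]
that is, $X = T_1 Z - Z T_2$. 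Since $T_1, T_2$ satisfy Property $(H)$, these two facts force $X = 0$, hence $Y = P X Q^{-1} = 0$. This shows $S_1, S_2$ satisfy Property $(H)$.

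There is no serious obstacle here; the argument is a routine conjugation bookkeeping, and the only thing to be careful about is the direction of the conjugation (left by $P^{-1}$, right by $Q$) so that both the intertwining equation and the Rosenblum-coboundary equation transform correctly and simultaneously. It is worth remarking, in the same spirit as the preceding Remark, that $\tau_{S_1, S_2}$ and $\tau_{T_1, T_2}$ are conjugate as linear maps on $\mathcal{L}(\mathcal{H})$ via the invertible map $R \mapsto P R Q^{-1}$, and that this conjugation carries $\ker \tau_{S_1, S_2}$ onto $\ker \tau_{T_1, T_2}$ and the range of $\tau_{S_1, S_2}$ onto the range of $\tau_{T_1, T_2}$; Property $(H)$ is precisely the statement that this kernel and range intersect trivially, a property manifestly invariant under such a conjugation. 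I would likely phrase the proof using this one-line observation and then spell out the two displayed computations above for completeness.
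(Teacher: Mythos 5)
Your proof is correct and is essentially identical to the paper's: the paper also writes $S_i = X_i T_i X_i^{-1}$ and transports $Y$ to $X_1^{-1} Y X_2$ (your $P^{-1} Y Q$), checks that both the intertwining relation and the coboundary relation are preserved, and invokes Property $(H)$ for $T_1, T_2$. No gaps.
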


\begin{proof} Since $T_i$ is similar to $S_i$, so there exists an invertible operator $X_i$ such that $X_iT_i=S_iX_i$, $1\leq i\leq 2$.   Let $Y$ be a bounded linear operator such that  $S_1Y=YS_2$ and $Y=S_1Z-ZS_2\; \mbox{for some}\; Z$ in $\mathcal{L}(\mathcal{H})$.
It is easy see that  
$$T_1X^{-1}_1YX_2=X^{-1}_1YX_2T_2$$
and 
\begin{eqnarray*}
X^{-1}_1YX_2&=&X^{-1}_1S_1X_1 X^{-1}_1ZX_2-X^{-1}_1ZX_2X^{-1}_2S_2X_2\\
&=&T_1X^{-1}_1ZX_2-X^{-1}_1ZX_2 T_2.
\end{eqnarray*}
Since $T_1,T_2$ satisfy the Property $(H)$, so $X^{-1}_1YX_2=0$ and hence $Y=0$.  Thus $S_1,S_2$ also satisfy the Property $(H)$. This completes the proof.

\end{proof}

\begin{defn}\label{CFB1}
A Cowen-Douglas operator $T$ with index $n$ is said to be in $\mathcal{CFB}_n(\Omega)$, if $T$ satisfies the following properties: 
\begin{itemize}
\item[(1)] $T$ can be written as an $n\times n$ upper-triangular matrix form under a topological direct decomposition of $\mathcal{H}$ and $diag\{T\}:=T_{1,1}\dotplus T_{2,2}\dotplus \cdots \dotplus T_{n,n}\in \{T\}^{\prime}$. Furthermore, each entry $$T_{i,j}=\phi_{i,j}T_{i,i+1}T_{i+1,i+2} \cdots T_{j-1,j}$$ where $\phi_{i,j}\in\{T_{i,i}\}^{\prime}$;
\item[(2)] $T_{i,i}, T_{i+1,i+1}$ satisfy the Property $(H)$, that is,  $\ker\,\tau_{T_{i,i},T_{i+1,i+1}}\cap \mbox{ran}\,\tau_{T_{i,i},T_{i+1,i+1}}=\{0\}$, $1\leq i\leq n-1$. 

\end{itemize}
\end{defn}

In order to prove the class  $\mathcal{CFB}_n(\Omega)$ is norm dense in $B_n(\Omega)$, we need the following concepts and lemmas:

\begin{defn}[Similarity invariant set]
Let $\mathcal{F}=\{A_{\alpha}\in \mathcal{L}(\mathcal{H}), \alpha\in \Lambda\}.$ 
We say $\mathcal{F}$ is a similarity invariant set, if  for any invertible operator $X$, 
$$X\mathcal{F}X^{-1}=\{XA_{\alpha}X^{-1}:A_{\alpha}\in \mathcal{F}\}=\mathcal{F}.$$

\end{defn}

\begin{defn}\cite{JW} If ${\mathcal K}(\mathcal{H})$ denotes the set of all compact operators acting on ${\mathcal H}$ and 
$\pi: {\mathcal L}(\mathcal {H})\rightarrow  {\mathcal L}(\mathcal {H})/{\mathcal K}(\mathcal{H})$ is the projection of ${\mathcal L}(\mathcal {H})$
onto the Calkin algebra, then $\sigma_{e}(T)$, the essential spectrum of $T$, is the spectrum of $\pi(T)$ in $ {\mathcal L}(\mathcal {H})/{\mathcal K}(\mathcal{H})$
and $\mathbb{C}\backslash \sigma_{e}(T)$ is called the Fredholm domain of $T$ and is denoted by $\rho_{F}(T)$.  Thus, $\sigma_{e}(T)=\sigma_{le}(T)\cup \sigma_{re}(T)$, 
where $\sigma_{le}(T)=\sigma_{l}(\pi(T))$ (left essential spectrum of $T$) and $\sigma_{re}(T)=\sigma_{r}(\pi(T))$ (right essential spectrum of $T$). 

On the other hand, the intersection $\sigma_{lre}(T):=\sigma_{le}(T)\cap \sigma_{re}(T)$ is called Wolf spectrum and it include the boundary $\partial \sigma_{e}(T)$ of $\sigma_{e}(T)$. Therefore, it is a non-empty compact subset of $\mathbb{C}$. Its complement $\mathbb{C}\backslash \sigma_{lre}(T)$ coincides with $\rho_{s-F}(T):=\{w\in \mathbb{C}: T-w$ is semi-Fredholm $\}$.  $\rho_{s-F}(T)$ is the disjoint union of the (possibly empty) open sets $\{\rho^n_{s-F}(T): -\infty\leq n\leq +\infty\},$ where 
$$\rho^n_{s-F}(T)=\{w\in \mathbb{C}:~~ \mbox{T-w is semi-Fredholm with}~~ \mbox{ind}\,(T-w)=n\}.$$

The spectrum picture of $T$, denoted by $\Lambda(T)$, is defined as the compact set $\sigma_{lre}(T)$, plus the data corresponding to the indices of $T-w$ for $w$ in the bounded components of $\rho_{s-F}(T).$ 

\end{defn}

\begin{lem}[\cite{Her}]\label{Herlemma0} Let $T\in  B_n(\Omega)$. Then 
$\sigma_{p}(T^*)=\emptyset,$ and $\sigma(T)$ is connected, where $\sigma_p(T^*)$ denotes the point spectrum of 
$T^*$. 

\end{lem}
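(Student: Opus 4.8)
The statement to prove is that for $T \in B_n(\Omega)$ we have $\sigma_p(T^*) = \emptyset$ and $\sigma(T)$ is connected. I will treat the two assertions separately, since they rest on different features of the Cowen--Douglas hypothesis.

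For the first assertion, the plan is to argue by contradiction: suppose there is $\lambda \in \mathbb{C}$ with $\ker(T^* - \bar\lambda) \neq \{0\}$, so there is a nonzero $x \in \mathcal{H}$ with $T^* x = \bar\lambda x$. The key is to play this against the spanning condition $\bigvee_{w \in \Omega} \ker(T - w) = \mathcal{H}$ together with surjectivity of $T - w$ for $w \in \Omega$. First I would reduce to the case $\lambda \in \Omega$: for $w \in \Omega$, since $T - w$ is surjective, $\ker(T^* - \bar w) = \operatorname{ran}(T-w)^{\perp} = \{0\}$, so in fact no point of $\Omega$ can lie in $\sigma_p(T^*)$; the content is therefore to rule out $\lambda \notin \Omega$ as well. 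For such a $\lambda$, I would use the holomorphic frame: pick holomorphic sections $w \mapsto \gamma_i(w) \in \ker(T - w)$ spanning $\mathcal{H}$. Then $\langle x, \gamma_i(w)\rangle$ is anti-holomorphic in $w$ (or: $w \mapsto \langle \gamma_i(w), x\rangle$ is holomorphic), and from $T^*x = \bar\lambda x$ and $T\gamma_i(w) = w\gamma_i(w)$ one computes $\langle \gamma_i(w), x\rangle (w - \lambda) = \langle (T-\lambda)\gamma_i(w), x\rangle = \langle \gamma_i(w), (T^*-\bar\lambda)x\rangle = 0$, so $\langle \gamma_i(w), x\rangle = 0$ for all $w \neq \lambda$ in $\Omega$ and all $i$; since the $\gamma_i(w)$ span $\mathcal{H}$, this forces $x = 0$, a contradiction. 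Hence $\sigma_p(T^*) = \emptyset$.

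For the connectedness of $\sigma(T)$, the plan is: first observe $\overline{\Omega} \subseteq \sigma(T)$, because for $w \in \Omega$, $T - w$ has nontrivial kernel hence is not invertible, and $\sigma(T)$ is closed. Next, for any $w \notin \overline{\Omega}$ that lies in the unbounded component of $\mathbb{C}\setminus \overline{\Omega}$ nothing forces $w \in \sigma(T)$; the real point is to show $\sigma(T)$ contains no ``isolated'' extra pieces and more precisely that $\sigma(T)$ is connected. The standard route is via the Fredholm index: on $\Omega$, $T - w$ is surjective with $n$-dimensional kernel, so $w \in \rho_{s-F}(T)$ with $\operatorname{ind}(T - w) = n > 0$, hence $w \notin \rho_F(T)$ meaning $\Omega \subseteq \sigma_e(T)$ and in fact $\Omega$ lies in the part of the semi-Fredholm domain with positive index. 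Since $\operatorname{ind}$ is locally constant and $\sigma_p(T^*) = \emptyset$ kills the possibility of negative-index points adjacent to $\Omega$, one shows that any bounded component of $\mathbb{C} \setminus \sigma_e(T)$ adjacent to $\Omega$ would contribute to $\sigma(T)$ as well, and then invokes the general fact (valid for any bounded operator) that $\sigma(T)$ minus the unbounded component of $\rho(T)$ is connected together with an argument that $\rho(T)$ is connected, i.e. has no bounded components. Concretely: a bounded component $U$ of $\rho(T)$ would have $T - w$ invertible for $w \in U$ but, by continuity of the index and $U$ touching the boundary of $\sigma(T) \supseteq \overline{\Omega}$, we would get points where $\operatorname{ind}(T-w) = 0$ abutting points where $\operatorname{ind} = n$, impossible; so $\rho(T)$ is connected and therefore $\sigma(T)$ is connected.

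The main obstacle is the connectedness argument: the point-spectrum fact is a short direct computation with the holomorphic frame, but ``$\sigma(T)$ is connected'' requires care in combining the local index data ($\operatorname{ind}(T-w) = n$ on $\Omega$, constancy of the index on semi-Fredholm components, vanishing point spectrum of $T^*$) with the topology of the plane to exclude bounded components of the resolvent set. I would lean on the Cowen--Douglas paper \cite{CD} and Herrero's book \cite{Her} for the precise form of this index/connectedness argument rather than redo it from scratch, since it is essentially the statement that a Cowen--Douglas bundle forces the spectrum to be the closure of a connected domain together with possible ``filled in'' index regions, all forming a connected set.
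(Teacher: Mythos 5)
The paper does not prove this lemma; it simply cites Herrero, so your proposal has to stand on its own. Your argument for $\sigma_p(T^*)=\emptyset$ is correct and is the standard one: for $x$ with $T^*x=\mu x$ and any $y\in\ker(T-w)$ one has $(w-\bar\mu)\langle y,x\rangle=\langle (T-\bar\mu)y,x\rangle=\langle y,(T^*-\mu)x\rangle=0$, and the spanning hypothesis (together with continuity of a local holomorphic frame to handle the single exceptional point $w=\bar\mu$) forces $x=0$. That half is fine.

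The connectedness argument, however, has a genuine gap. Your strategy is to show that $\rho(T)$ has no bounded components and then conclude that $\sigma(T)$ is connected; but connectedness of the resolvent set does not imply connectedness of the spectrum (two disjoint closed disks have connected complement), and the ``general fact'' you invoke about $\sigma(T)$ minus the unbounded component of $\rho(T)$ being connected is false for the same reason. There are smaller slips as well: since $T-w$ is surjective with $n$-dimensional kernel, $w$ lies \emph{in} the Fredholm domain $\rho_F(T)$, not in $\sigma_e(T)$ as you wrote; and the claim that index-$0$ points cannot abut index-$n$ points is not true in general --- distinct index regions may abut across the Wolf spectrum (the unilateral shift already exhibits index $-1$ and index $0$ regions meeting along the unit circle). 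The correct and much shorter route avoids index theory entirely: if $\sigma(T)=\sigma_1\sqcup\sigma_2$ with both parts nonempty and closed, the Riesz functional calculus gives a topological direct sum $\mathcal H=\mathcal H_1\dotplus\mathcal H_2$ with $T=T_1\dotplus T_2$ and $\sigma(T_i)=\sigma_i$. Since $\Omega$ is connected and $\Omega\subseteq\sigma(T)$, it lies entirely in one part, say $\sigma_1$; then for every $w\in\Omega$ the operator $T_2-w$ is invertible, so $\ker(T-w)=\ker(T_1-w)\subseteq\mathcal H_1$, whence $\mathcal H=\bigvee_{w\in\Omega}\ker(T-w)\subseteq\mathcal H_1$ and $\mathcal H_2=\{0\}$, contradicting $\sigma_2\neq\emptyset$. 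You should replace your index/topology argument with this Riesz-decomposition argument.
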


\begin{lem}[\cite{Her}]\label{Herlemma} Let $T\in  \mathcal{L}(\mathcal{H})$, $\varepsilon>0$, and let $T$ be a quasi-triangular operator such that 
\begin{enumerate}
\item[(i)] $\sigma(T)$ is connected;
\item[(ii)] $\mbox{ind}\,(T-w)>0$, $w\in \rho_{F}(T)$;
\item[(iii)] there exist a  positive integer $n$ and $w_0\in \rho_{F}(T)$ such that $\mbox{ind}\,(T-w_0)=n,$
\end{enumerate}
then there exists a compact operator $K$ with $\|K\|<\varepsilon$ such that $T+K\in B_n(\Omega)$, where $w_0\in \Omega\subset \rho_{F}(T).$

\end{lem}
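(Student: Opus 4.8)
The plan is to fix a round disk $\Omega$ about $w_0$ with $\overline{\Omega}\subset\rho_F(T)$; since the Fredholm index is locally constant on $\rho_F(T)$, hypothesis (iii) forces $\operatorname{ind}(T-w)=n$ for every $w\in\Omega$. Thus the whole problem is to produce a compact $K$ with $\|K\|<\varepsilon$ for which $T+K$ is \emph{surjective} on $\Omega$, has $n$-dimensional kernels there, and satisfies $\bigvee_{w\in\Omega}\ker\big((T+K)-w\big)=\mathcal H$. I would build $K$ as the sum of two small compact perturbations, $K=K_1+K_2$, each responsible for one of these failures.

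\emph{First perturbation: normalizing the Fredholm behaviour.} Recall that compact perturbations of a quasi-triangular operator are again quasi-triangular, and that by the Apostol--Foias--Voiculescu characterization quasi-triangularity of $T$ together with hypothesis (ii) says that the semi-Fredholm index of $T$ is $\geq 1$ at every point of $\rho_F(T)$ inside $\sigma(T)$. Herrero's approximation theory then provides a compact $K_1$ with $\|K_1\|<\varepsilon/2$ such that $T_1:=T+K_1$ has, for every $w$ in the relevant component of $\rho_F(T)$ and in particular on $\Omega$, $\operatorname{ran}(T_1-w)=\mathcal H$ and $\dim\ker(T_1-w)=\operatorname{ind}(T-w)$; i.e. the cokernels are removed and the nullity is minimal. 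The underlying mechanism is local --- a positive-index Fredholm operator with nonzero cokernel can be compactly perturbed by an arbitrarily small amount so as to kill the cokernel without changing the index --- and quasi-triangularity together with connectedness of $\sigma(T)$ from hypothesis (i) lets the local adjustments be fitted together coherently and with total norm $<\varepsilon/2$.

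\emph{Second perturbation: making the kernels cyclic.} Put $\mathcal M=\bigvee_{w\in\Omega}\ker(T_1-w)$. This is invariant for $T_1$, and $T_1|_{\mathcal M}\in B_n(\Omega)$ by construction. Writing $T_1=\begin{pmatrix}T_1|_{\mathcal M}&\ast\\0&C\end{pmatrix}$ with respect to $\mathcal H=\mathcal M\oplus\mathcal M^{\perp}$, one checks that $T_1+K_2\in B_n(\Omega)$ as soon as a small compact $K_2$ pushes $\sigma(C)$ off $\overline{\Omega}$ while preserving quasi-triangularity and connectedness of the spectrum. Since $\sigma(T)$ --- hence, up to the small perturbation already made, $\sigma(T_1)$ --- is connected and $\Omega$ is an interior disk, there is no topological obstruction to this, and the absence of eigenvalues of the adjoint of an operator in $B_n(\Omega)$ (cf.\ Lemma~\ref{Herlemma0}) is exactly what guarantees that $\mathcal M$ fills up $\mathcal H$ after the perturbation. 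Taking $K=K_1+K_2$ then gives $T+K\in B_n(\Omega)$ with $\|K\|<\varepsilon$.

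\textbf{The main obstacle} is the first perturbation: globally and coherently repairing the Fredholm index data over $\rho_F(T)$ by a compact operator of arbitrarily small norm. This is precisely Herrero's quantitative approximation machinery (``nice''/smooth Fredholm models and the $\Gamma$-type invariants), which ultimately rests on the similarity orbit theorem \cite{SO}; rather than reproduce it I would invoke \cite{Her} directly. The role of hypotheses (i)--(iii) is only to certify that $T$ lies in the norm closure of $\{S:S\in B_n(\Omega')\text{ for some disk }\Omega'\ni w_0\}$, which is what those hypotheses are designed to ensure. The case $n=1$ is the theorem of Herrero quoted earlier in this section, and the general case is the same argument carried out with $n$-dimensional kernels throughout.
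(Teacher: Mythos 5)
The paper does not actually prove this lemma: it is quoted from Herrero \cite{Her} and used as a black box (in Lemma \ref{JWdslemma} and in Proposition \ref{ND}), so there is no in-paper argument to compare yours against. Judged on its own terms, your outline correctly isolates the two things that must be arranged on a disk $\Omega\ni w_0$ --- surjectivity of $T+K-w$ with $\dim\ker(T+K-w)=n$ for all $w\in\Omega$, and density of $\bigvee_{w\in\Omega}\ker(T+K-w)$ --- and the reduction of hypothesis (iii) to constancy of the index on $\Omega$ is fine. But as a proof it is circular: at both critical junctures you ``invoke \cite{Her} directly,'' and \cite{Her} is precisely the source of the statement being proved. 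The pointwise fact you cite (a positive-index Fredholm operator can be made surjective by an arbitrarily small compact perturbation) does not by itself assemble into a single compact $K_1$ with $\|K_1\|<\varepsilon/2$ accomplishing this simultaneously for every $w\in\Omega$; that globalization \emph{is} the theorem, and it rests on the Apostol--Fialkow--Herrero--Voiculescu spectral-picture and similarity-orbit machinery, none of which is reproduced or even precisely quoted in your sketch.

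The second perturbation contains an additional genuine gap. Writing $T_1$ as an upper-triangular $2\times 2$ operator matrix with diagonal entries $T_1|_{\mathcal M}$ and $C$ relative to $\mathcal H=\mathcal M\oplus\mathcal M^{\perp}$, you assert that a small compact $K_2$ can ``push $\sigma(C)$ off $\overline\Omega$.'' Compact perturbations cannot remove Wolf spectrum, and no compact perturbation (small or not) can delete points where the semi-Fredholm index is nonzero; so this step is admissible only after one verifies that $\overline\Omega\cap\sigma_{lre}(C)=\emptyset$ and $\operatorname{ind}(C-w)=0$ on $\Omega$, i.e.\ that all of the index of $T_1-w$ has already been absorbed into $\mathcal M$. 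You never check this, and it is not automatic from the construction of $K_1$. Even granting it, emptying an open region of spectrum with an $\varepsilon$-small compact is itself a nontrivial piece of Apostol--Herrero approximation theory. Finally, the appeal to Lemma \ref{Herlemma0} is backwards: $\sigma_p(S^*)=\emptyset$ is a \emph{consequence} of membership in $B_n(\Omega)$, not a tool for forcing $\mathcal M=\mathcal H$. In short, the skeleton is plausible, but every load-bearing step is delegated to the very reference from which the lemma is being quoted, so the proposal does not constitute an independent proof.
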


\begin{lem}[Voiculescu Theorem, \cite{Voi}]\label{vcthm} Let $T\in \mathcal{L}(\mathcal{H})$ and $\rho$ be a unital faithful $*$-representation of a separable 
$C^*$-subalgebra of the Calkin algebra containing the canonical image $\pi(T)$ and $\pi(I)$, on a separable space $\mathcal{H}_{\rho}.$ Let 
$A=\rho(\pi(T))$ and $k$ be a positive integer. Given $\varepsilon$, there exists $K \in \mathcal{K}(\mathcal{H})$, with $\|K\|<\varepsilon$, such that 
$$T-K\sim_u T\oplus A^{(\infty)}\sim_u T\oplus A^{(k)},$$
where $A^{(k)}$ denotes $\bigoplus\limits^{k}_{i=1}A$, and $A^{(\infty)}$ denotes $\bigoplus\limits^{\infty}_{i=1}A$. 

\end{lem}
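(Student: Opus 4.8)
The statement is Voiculescu's non-commutative Weyl--von Neumann theorem, so the plan is to run the standard argument organised around a quasicentral approximate unit. First I would replace $T$ by the unital separable $C^*$-subalgebra $\mathcal{B}\subset\mathcal{L}(\mathcal{H})$ generated by $T$ and $I$, and set $\sigma=\rho\circ\pi|_{\mathcal{B}}$. Then $\sigma$ is a unital $*$-representation of $\mathcal{B}$ on $\mathcal{H}_{\rho}$ with $\sigma(\mathcal{B}\cap\mathcal{K}(\mathcal{H}))=0$ and $\sigma(T)=A$, and the theorem reduces to the abstract \emph{absorption statement}: whenever $\mathcal{B}$ is a unital separable $C^*$-algebra of operators on $\mathcal{H}$ and $\sigma$ is a unital representation of $\mathcal{B}$ on a separable space that annihilates $\mathcal{B}\cap\mathcal{K}(\mathcal{H})$, there is, for every $\varepsilon>0$, a compact $K$ with $\|K\|<\varepsilon$ such that the identity representation of $\mathcal{B}$, after subtracting $K$, becomes unitarily equivalent to $\mathrm{id}_{\mathcal{B}}\oplus\sigma^{(\infty)}$. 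Evaluating at $T$ gives $T-K\sim_u T\oplus A^{(\infty)}$; a second application of the same statement, now to $T\oplus A^{(k)}$ (whose essential structure still carries $\rho$ faithfully), collapses the extra $A^{(\infty)}$ summand, and since ``unitary equivalence after an arbitrarily small compact perturbation'' is transitive one gets $T-K'\sim_u T\oplus A^{(k)}$ as well, which is the asserted chain.

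The heart of the matter is the construction of a \emph{quasicentral approximate unit}: an increasing sequence $(e_m)_{m\ge 1}$ of finite-rank positive contractions in $\mathcal{K}(\mathcal{H})$ with $e_m\to I$ strongly and $\|[e_m,b]\|\to 0$ for every $b\in\mathcal{B}$. I would obtain this by starting from an arbitrary countable approximate unit of $\mathcal{K}(\mathcal{H})$ and passing to suitable convex combinations of its terms, using that the norm closure and the weak closure of a convex set coincide, together with the separability of $\mathcal{B}$ (so that only countably many commutator conditions need to be arranged simultaneously).

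With $(e_m)$ in hand, set $f_m=(e_{m+1}-e_m)^{1/2}$, so that $\sum_m f_m^2=I$ strongly, each $f_m$ has finite rank, and the commutators $[f_m,b]$ are small in a summable sense. On the finite-dimensional range of each $f_m$ I would approximate the compression $b\mapsto f_m b f_m$ by a block unitarily equivalent to a finite compression of $\sigma(b)$; this is exactly where $\sigma(\mathcal{B}\cap\mathcal{K})=0$ is used, since it lets $\sigma(\mathcal{B})$ absorb arbitrary finite-dimensional corners. Summing these partial isometries over $m$ and controlling the mismatch between consecutive blocks by the quasicentrality estimates produces an isometry $V\colon \mathcal{H}_{\rho}^{(\infty)}\to\mathcal{H}$ with $V^*bV-\sigma^{(\infty)}(b)$ compact and of arbitrarily small norm; that is, $T$ is \emph{approximately} unitarily equivalent modulo compacts to $T\oplus A^{(\infty)}$. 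To upgrade ``approximately unitarily equivalent mod $\mathcal{K}$'' to genuine ``unitarily equivalent mod $\mathcal{K}$'' I would run a back-and-forth/Cauchy-sequence argument of Berg/BDF type: iterating the previous step between $\mathrm{id}_{\mathcal{B}}\oplus\sigma^{(\infty)}$ and $\mathrm{id}_{\mathcal{B}}\oplus\sigma^{(\infty)}\oplus\sigma^{(\infty)}$ and invoking the absorption $\sigma^{(\infty)}\oplus\sigma^{(\infty)}\cong\sigma^{(\infty)}$, one builds a norm-convergent sequence of unitaries whose limit $U$ satisfies $UbU^*-b\in\mathcal{K}(\mathcal{H})$ for all $b\in\mathcal{B}$ and exactly implements $\mathrm{id}_{\mathcal{B}}\sim_u\mathrm{id}_{\mathcal{B}}\oplus\sigma^{(\infty)}$ modulo compacts.

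I expect the main obstacle to be the patching in the middle step: one must arrange that the finite local models of $\sigma$ chosen on successive blocks $f_m$ are mutually compatible and that the quasicentral commutator bounds telescope, so that the infinitely many block errors sum to an operator that is both compact and of prescribed small norm. The initial reduction to $C^*(T,I)$ and the concluding Cauchy-sequence argument are comparatively routine once the quasicentral approximate unit and the local absorption are available.
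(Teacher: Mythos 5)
The paper does not prove this lemma at all: it is imported verbatim from Voiculescu's 1976 paper \cite{Voi} (the non-commutative Weyl--von Neumann theorem) and used as a black box in the proof of Lemma \ref{JWdslemma}. So there is no in-paper argument to compare against; what you have written is a reconstruction of the standard proof from the literature, and as a sketch it is correct. The reduction to $\sigma=\rho\circ\pi|_{C^*(T,I)}$ (which automatically annihilates $C^*(T,I)\cap\mathcal{K}(\mathcal{H})$ because $\pi$ does), the quasicentral approximate unit obtained from convex combinations, the partition of unity $f_m=(e_{m+1}-e_m)^{1/2}$ with summable commutator estimates, the local absorption step (which is where Glimm-type approximation of states vanishing on the compacts by vector states orthogonal to a prescribed finite-dimensional subspace enters), and the Berg/BDF back-and-forth upgrade from approximate to exact unitary equivalence modulo a small compact --- this is exactly Voiculescu's argument as it appears in \cite{Voi} and in the standard expositions. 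You also correctly handle the one point in the statement that requires care: the chain $T-K\sim_u T\oplus A^{(\infty)}\sim_u T\oplus A^{(k)}$ is obtained by two applications of the absorption theorem (once to $T$ and once to $T\oplus A^{(k)}$, using $A^{(k)}\oplus A^{(\infty)}\cong A^{(\infty)}$) together with transitivity of ``unitarily equivalent up to an arbitrarily small compact perturbation,'' rather than by a single unitary. The only caveat is that your middle step (patching the finite local models of $\sigma$ across the blocks $f_m$) is the genuinely technical part and is only described, not carried out; but identifying it as the main obstacle is accurate, and for the purposes of this paper the lemma is in any case a citation, not a claim requiring proof.
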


\begin{lem}[Special case of similarity orbit Theorem, Apostle, Fialkow, Herrero, and Voiculescu\cite{SO}]\label{SOT}
Let $T$ and $S$ be in $B_n(\Omega)$ satisfy the following conditions: 
\begin{enumerate}
\item[(i)] $\sigma_{lre}(T)=\sigma_{lre}(S)$ and $\sigma_{lre}(T)$ is a perfect set; 
\item[(ii)]$\Lambda(T)=\Lambda(S).$
\end{enumerate}
Then there exist two sequences of 
invertible operators $\{X_n\}^{\infty}_{n=1}$ and $\{Y_n\}^{\infty}_{n=1}$ such that 
$$\lim\limits_{n\rightarrow \infty}X_nTX^{-1}_n=S,\;\;\;\;\;\; \lim\limits_{n\rightarrow \infty}Y_nSY^{-1}_n=T.$$

\end{lem}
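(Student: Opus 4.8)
The assertion is exactly that $S$ belongs to the norm closure of the similarity orbit $\mathcal{S}(T) := \{XTX^{-1} : X\in\mathcal{L}(\mathcal{H})\ \mbox{invertible}\}$ and, by symmetry, that $T$ belongs to the closure of $\mathcal{S}(S)$. My plan is therefore to derive it from the full similarity orbit theorem of Apostol, Fialkow, Herrero and Voiculescu \cite{SO}, by checking that for operators in $B_n(\Omega)$ every hypothesis of that theorem is either one of the assumptions (i)--(ii) or an immediate consequence of the elementary facts recorded in Lemma \ref{Herlemma0}. Recall that the similarity orbit theorem characterises membership $A\in\overline{\mathcal{S}(B)}$, roughly, by the agreement for $A$ and $B$ of: (a) the semi-Fredholm data --- i.e.\ $\sigma_{lre}(A)=\sigma_{lre}(B)$, $\rho^m_{s-F}(A)=\rho^m_{s-F}(B)$ for all $m$, and the minimal indices $\min\{\dim\ker(A-w),\dim\ker(A-w)^*\}$ on $\rho_{s-F}$; (b) the full spectra, $\sigma(A)=\sigma(B)$; and (c) the set $\sigma_0$ of isolated points of the spectrum that are eigenvalues of finite multiplicity, together with the similarity class of the restriction of the operator to the associated finite-dimensional Riesz subspaces.

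I would verify these in turn. By Lemma \ref{Herlemma0}, $\sigma_p(T^*)=\sigma_p(S^*)=\emptyset$, so $\dim\ker(T-w)^*=\dim\ker(S-w)^*=0$ for every $w\in\C$; consequently each of $T-w$, $S-w$ is surjective as soon as it is semi-Fredholm, $\mbox{ind}(T-w)=\dim\ker(T-w)\ge 0$ on $\rho_{s-F}(T)$, and all minimal indices vanish --- so that half of (a) is automatic and the other half is precisely the hypotheses $\sigma_{lre}(T)=\sigma_{lre}(S)$ and $\Lambda(T)=\Lambda(S)$. The same computation shows $w\in\sigma(T)$ iff $w\in\sigma_{lre}(T)$ or $\mbox{ind}(T-w)\ge 1$, so $\sigma(T)$ is completely determined by $\Lambda(T)$; hence $\sigma(T)=\sigma(S)$, giving (b). Finally, again by Lemma \ref{Herlemma0}, $\sigma(T)$ is connected, and it contains the non-empty open set $\Omega$, hence has no isolated points; thus $\sigma_0(T)=\sigma_0(S)=\emptyset$ and (c) holds vacuously.

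All hypotheses of the similarity orbit theorem being met, \cite{SO} yields $S\in\overline{\mathcal{S}(T)}$ and $T\in\overline{\mathcal{S}(S)}$, and unwinding the definition of the norm closure produces the required sequences $\{X_n\}_{n=1}^\infty$ and $\{Y_n\}_{n=1}^\infty$ of invertible operators with $X_nTX_n^{-1}\to S$ and $Y_nSY_n^{-1}\to T$. I do not expect any genuine analytic difficulty here: there is no approximation argument to carry out by hand, since the two basic properties of the Cowen--Douglas class used above --- connectedness of the spectrum and the vanishing of all the defect numbers $\dim\ker(T-w)^*$ --- trivialise every delicate condition in the general theorem. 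The only point requiring care is to have the similarity orbit theorem stated in a sufficiently explicit form that this reduction is clearly legitimate; a self-contained argument would amount to re-proving the relevant portion of \cite{SO} and is not worth attempting here.
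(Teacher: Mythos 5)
The paper states this lemma without proof, simply citing the similarity orbit theorem of Apostol--Fialkow--Herrero--Voiculescu \cite{SO}, and your proposal is the natural and correct way to fill in that citation: the observation that $\sigma_p(T^*)=\emptyset$ for $T\in B_n(\Omega)$ forces all defect numbers and minimal indices to vanish, makes $\sigma(T)$ recoverable from $\Lambda(T)$, and (together with connectedness of $\sigma(T)\supseteq\Omega$) empties the set of isolated normal eigenvalues, so the general theorem applies directly. This is essentially the same (implicit) approach as the paper's, with the verification spelled out.
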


\begin{lem} \label{JWdslemma}
Let $T\in  B_n(\Omega)$, $\varepsilon>0$ and $\Phi$ be an analytic Cauchy domain satisfying 
$$\sigma_{lre}(T)\subset \Phi \subset [\sigma_{lre}(T)]_{\varepsilon}:=\{w\in \mathbb{C}:\; \mbox{dist}\,(w, \sigma_{lre}(T))< \varepsilon\}, $$
then there exists $T_{\varepsilon}\in B_n(\Omega_1)$ such that 
\begin{enumerate}
\item[(i)] $\sigma_{lre}(T_{\varepsilon})=\Phi$ and $\Omega_1$ is an open connected subset of $\Omega$; 
\item[(ii)] $\|T-T_{\varepsilon}\|<\varepsilon.$
\end{enumerate}

\end{lem}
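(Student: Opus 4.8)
The plan is to build $T_\varepsilon$ from $T$ by a compact perturbation that "resizes" the Wolf spectrum to the prescribed analytic Cauchy domain $\Phi$, keeping the operator inside the Cowen–Douglas class all along. First I would fix a model operator: choose a point $\lambda_0 \in \Phi \setminus \sigma_{lre}(T)$ (or simply work with the restriction $\pi(T)$) and let $A$ be a concrete Cowen–Douglas operator of index $n$ on a separable space $\mathcal{H}_\rho$ whose spectrum and essential spectrum are exactly $\overline{\Phi}$ and $\partial \Phi$, respectively — for instance a suitable adjoint of a multiplication operator on a space of holomorphic functions over $\Phi$, or a direct integral of backward shifts. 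The key is that $\sigma_{lre}(A) = \Phi$ (a perfect set, since $\Phi$ is an analytic Cauchy domain) and $\mathrm{ind}(A-w) = n$ on $\Phi$. Since $\sigma_{lre}(T) \subset \Phi$, the canonical images $\pi(T)$ and $\pi(A)$ generate $C^*$-subalgebras of the Calkin algebra, and one can arrange a unital faithful $*$-representation $\rho$ of a separable $C^*$-subalgebra containing $\pi(T), \pi(I)$ with $\rho(\pi(T))$ unitarily equivalent (in the Calkin algebra) to a summand built from $A$.

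The second step is to apply Voiculescu's theorem (Lemma \ref{vcthm}) with this $\rho$: given $\varepsilon > 0$ there is a compact $K_1$, $\|K_1\| < \varepsilon/2$, with $T - K_1 \sim_u T \oplus A^{(k)}$ for any chosen $k$ — in fact we only need $T \oplus A$. Now $T \oplus A$ is a compact perturbation of $T$ of norm $<\varepsilon/2$, it is still quasitriangular with connected spectrum (because $\sigma(T) \cup \overline{\Phi}$ is connected — $\Phi$ contains $\sigma_{lre}(T)$, hence meets $\sigma(T)$, and $\overline{\Phi}$ is connected as $\Phi$ is a Cauchy domain inside the connected $\Omega$-related picture), its index is $n > 0$ on its Fredholm domain, and there is a point where the index equals $n$. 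Its Wolf spectrum is $\sigma_{lre}(T) \cup \partial\Phi$, whose complement's unbounded component still sees the whole picture. Applying Lemma \ref{Herlemma} (Herrero) to $T \oplus A$ — whose hypotheses (i), (ii), (iii) we have just verified — produces a compact $K_2$, $\|K_2\| < \varepsilon/2$, with $(T \oplus A) + K_2 \in B_n(\Omega_1)$ for an open connected $\Omega_1$. Setting $T_\varepsilon := (T-K_1) \oplus \text{(correction)} + K_2$, appropriately bookkept, gives $\|T - T_\varepsilon\| < \varepsilon$. The remaining point is to check that the construction can be tuned so that $\sigma_{lre}(T_\varepsilon)$ is exactly $\Phi$ rather than $\sigma_{lre}(T) \cup \partial\Phi$: here one uses that by choosing $A$ (and possibly another round of Voiculescu to absorb multiplicity) one can make the summand $A$ "dominate" so that adding it fills in the interior — i.e. the essential spectrum of $T \oplus A$ is $\sigma_e(T) \cup \overline\Phi$, whose Wolf part, after Herrero's smoothing within $\Omega_1$, becomes the boundary-free perfect set $\Phi$; shrinking $\Omega_1$ if necessary keeps $\Omega_1 \subset \Omega$ and $\sigma_{lre}(T_\varepsilon) = \Phi$.

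The main obstacle I anticipate is the precise spectral bookkeeping: ensuring simultaneously that (a) $\sigma_{lre}(T_\varepsilon)$ equals $\Phi$ on the nose (not a proper sub- or superset), (b) the index stays uniformly $n$ over a genuine open connected neighborhood $\Omega_1 \subseteq \Omega$, and (c) the total compact perturbation has norm $<\varepsilon$. Items (b) and (c) are handled cleanly by Lemma \ref{Herlemma} and Lemma \ref{vcthm}; the delicate part is (a), which requires the model $A$ to have been chosen with $\sigma_e(A) = \partial\Phi$ and $\sigma(A) = \overline\Phi$ and index exactly $n$, so that the union $\sigma_e(T)\cup\sigma_e(A)$ has Wolf spectrum with empty interior equal to $\partial\Phi \cup \sigma_{lre}(T) \subseteq \partial\Phi \cup \Phi = \overline\Phi$, and then Herrero's perturbation inside $\Omega_1$ absorbs the extra boundary piece $\partial\Phi$ into the index-$n$ region, leaving exactly $\Phi$. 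If one wishes to avoid this subtlety entirely, an alternative is to cite directly the similarity-orbit/approximation machinery (Lemma \ref{SOT} combined with the normalization results of \cite{JW}) that already packages "perturb a Cowen–Douglas operator so its Wolf spectrum becomes a prescribed analytic Cauchy domain"; in the reference \cite{JW} this is essentially a known lemma and the proof there can be quoted with the norm estimate tracked through.
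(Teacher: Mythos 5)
Your construction never leaves the compact-perturbation orbit of $T$, and that is fatal to conclusion (i). Both tools you invoke produce compact perturbations: Voiculescu's theorem (Lemma \ref{vcthm}) gives $T-K_1\sim_u T\oplus A$ with $K_1$ compact, and Herrero's lemma (Lemma \ref{Herlemma}) adds another compact operator. But $\sigma_{lre}$ is defined through the Calkin algebra and is therefore invariant under compact perturbations, so any $T_\varepsilon$ obtained this way satisfies $\sigma_{lre}(T_\varepsilon)=\sigma_{lre}(T)$, a compact set properly contained in the open set $\Phi$, which can never equal $\Phi$. Relatedly, you cannot choose the Voiculescu summand $A$ to be an arbitrary model operator with $\sigma(A)=\overline{\Phi}$ and $\sigma_e(A)=\partial\Phi$: the theorem only permits $A=\rho(\pi(T))$ for a unital faithful $*$-representation $\rho$, so $\sigma(A)\subseteq\sigma_e(T)$, and $\overline{\Phi}$ is in general not contained in $\sigma_e(T)$ (it is an $\varepsilon$-neighbourhood of $\sigma_{lre}(T)$ and typically protrudes outside $\sigma(T)$). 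Your closing suggestion that one can ``make the summand $A$ dominate so that adding it fills in the interior'' is precisely the step that cannot be accomplished by adding a compact operator.

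The missing idea, which is how the paper proceeds, is a small-norm but \emph{non-compact} perturbation. After Voiculescu one uses the structure theory of \cite{JW} to write $T+K_1+K_2$, up to unitary equivalence, as $\bigl(\begin{smallmatrix} N & A_{1,2}\\ 0 & A_\infty\end{smallmatrix}\bigr)\oplus T$ where $N$ is a diagonal normal operator of uniform infinite multiplicity with $\sigma(N)=\sigma_{lre}(T)$. One then replaces $N$ by a diagonal normal operator $M$ of uniform infinite multiplicity with $\sigma(M)=\sigma_e(M)=\overline{\Phi}$; because $\sigma_{lre}(T)\subset\Phi\subset[\sigma_{lre}(T)]_{\varepsilon}$, the diagonal entries of $N$ and $M$ can be paired so that $\|N-M\|$ is controlled by the Hausdorff distance between the two spectra, hence small, while $M-N$ is certainly not compact and genuinely enlarges the Wolf spectrum to $\overline{\Phi}$. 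Only after this replacement does one apply Herrero's lemma to land in $B_n(\Omega_1)$. Without a step of this kind your argument cannot produce (i), and deferring to ``a known lemma in \cite{JW}'' at the end is deferring exactly this point.
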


\begin{proof} Let $\delta=\mbox{dist}\,(\Phi, \partial  [\sigma_{lre}(T)]_{\varepsilon})$. By Lemma \ref{vcthm}, there exist an operator $A$ and an operator $K_1$ with $\|K_1\|<\frac{\delta}{3}$,
and a unitary operator $U$ such that 
$$U(T+K_1)U^*=A\oplus T,~\mbox{where}~\sigma_{lre}(A)=\sigma_{lre}(T).$$
By Theorem 1.25 (\cite{JW}) and Proposition 1.22 (\cite{JW}), there exists  a compact operator $K_2$ with $\|K_2\|<\frac{\delta}{3}$ such that 
$$U(T+K_1+K_2)U^*=\left ( \begin{matrix} N&A_{1,2}\\0&A_{\infty}\end{matrix}\right)\oplus T=B\oplus T,$$
and $\sigma_{e}(B)=\sigma(B)=\sigma_{e}(T)$, $\sigma_{lre}(B)=\sigma_{lre}(T)$, $\sigma(A_{\infty})=\sigma_e(A_{\infty})=\sigma_e(T)$, where 
$B=\left ( \begin{matrix} N&A_{1,2}\\0&A_{\infty} \end{matrix}\right)$ and 
$N$ is a diagonal normal operator of uniform infinite multiplicity and $\sigma(N)=\sigma_{lre}(T).$

Let $L=\left ( \begin{matrix} M&A_{1,2}\\0&A_{\infty}\end{matrix}\right)\oplus T,$ where $M$ is a diagonal normal operator of uniform infinite multiplication operator with $\sigma(M)=\sigma_{e}(M)=\Phi.$  By a direct calculation, we can see that 
$$\|L-U(T+K_1+K_2)U^*\|<\varepsilon-\delta.$$
Let  $T^{\prime }_{\varepsilon}$ denote $U^*LU$. By Lemma \ref{Herlemma0}, we have that $\sigma_p(T^{*})=\emptyset$, then it follows that $\sigma(T_{\varepsilon}^{\prime})$ is connected and $\sigma_{p}(T^{\prime *}_{\varepsilon})=\emptyset.$  When $\varepsilon$ is small enough, we can find $w\in \Omega$ and $\delta_1>0$ such that 
$\Omega_1=O_{w, \delta_1}$, the neighbourhood of $w$ such that $\Omega_1\subset \Omega$. 

Applying Lemma \ref{Herlemma} to $T^{\prime}_{\varepsilon}$, there is a compact operator $K_3$ with $\|K_3\|<\varepsilon$ such that $T_{\varepsilon}=T^{\prime}_{\varepsilon}+K_3\in B_n(\Omega_1)$, then $T_{\varepsilon}$ satisfies all the requirements of the lemma.

\end{proof}

\begin{lem}\cite{CD} \label{cdl1}Let $T\in B_n(\Omega)$ and $\Omega_1\subseteq \Omega$ be an open connected set.  Then $B_n(\Omega)\subseteq B_n(\Omega_1).$

\end{lem}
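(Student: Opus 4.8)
We seek to show that any $T\in B_n(\Omega)$ also lies in $B_n(\Omega_1)$. Two of the three defining requirements are inherited for free: since $\Omega_1\subseteq\Omega$, the operator $T-w$ is surjective and $\dim\ker(T-w)=n$ for every $w\in\Omega_1$ simply because these hold for every $w\in\Omega$. So the whole content of the lemma is the completeness condition $\bigvee_{w\in\Omega_1}\ker(T-w)=\mathcal H$, and the plan is to prove exactly this.

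To that end I would fix $x\in\mathcal H$ with $x\perp\ker(T-w)$ for all $w\in\Omega_1$ and aim to conclude $x\perp\ker(T-w)$ for \emph{every} $w\in\Omega$; the original completeness condition $\bigvee_{w\in\Omega}\ker(T-w)=\mathcal H$ would then force $x=0$, proving the claim. Put
$$S=\Big\{w\in\Omega:\ x\perp\ker(T-v)\ \text{for all }v\text{ in some neighbourhood of }w\Big\}.$$
Then $S$ is open by construction, and $\Omega_1\subseteq S$, so $S\neq\emptyset$. The key step is to show $S$ is also closed in $\Omega$. Given $w_0\in\overline S\cap\Omega$, pick a holomorphic frame $\gamma_1,\dots,\gamma_n$ of the bundle $E_T$ on a connected neighbourhood $U$ of $w_0$ (such a local holomorphic frame exists by the foundational results of \cite{CD}), so each $f_i(w)=\langle\gamma_i(w),x\rangle$ is a scalar holomorphic function on $U$. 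Since $w_0\in\overline S$ and $U$ is a neighbourhood of $w_0$, the open set $S\cap U$ is nonempty; on a neighbourhood of any of its points all the $f_i$ vanish identically, so by the identity theorem on the connected set $U$ we get $f_i\equiv 0$ on $U$ for each $i$. Hence $x\perp\mathrm{span}\{\gamma_1(w),\dots,\gamma_n(w)\}=\ker(T-w)$ for all $w\in U$, which shows $U\subseteq S$ and in particular $w_0\in S$. Thus $S$ is closed, and since $\Omega$ is connected, $S=\Omega$.

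Putting the three conditions together yields $T\in B_n(\Omega_1)$, and as $T\in B_n(\Omega)$ was arbitrary, $B_n(\Omega)\subseteq B_n(\Omega_1)$. The only genuine work is the closedness of $S$, and the one ingredient that actually uses the holomorphic structure is the passage to a local holomorphic frame together with the identity theorem; everything else is routine, so I would expect that closedness step to be the sole point requiring care.
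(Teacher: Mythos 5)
Your proof is correct and is essentially the standard argument from \cite{CD} that the paper is citing: the first two defining conditions restrict trivially, and the density condition over $\Omega_1$ follows from density over $\Omega$ by the open-closed (identity theorem plus local holomorphic frame) argument on the connected set $\Omega$. The paper gives no proof of its own here, and your treatment of the only nontrivial step is sound.
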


\begin{prop}\label{ND} $\mathcal{CFB}_n(\Omega)$ is norm dense in 
$B_n(\Omega)$. 
\end{prop}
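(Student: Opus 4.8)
The plan is to start with an arbitrary $T \in B_n(\Omega)$ and to perturb it, in norm and by an arbitrarily small amount, into an operator of $\mathcal{CFB}_n(\Omega)$. The natural tools are already laid out in the excerpt: the Upper triangular representation theorem gives us a decomposition into a strictly upper-triangular operator with $B_1$-diagonal; Lemma~\ref{JWdslemma} lets us shrink the Wolf spectrum $\sigma_{lre}$ to a small analytic Cauchy domain $\Phi$ inside a small neighbourhood of $\sigma_{lre}(T)$; Lemma~\ref{cdl1} lets us pass to a smaller connected $\Omega_1$; and the special similarity orbit theorem (Lemma~\ref{SOT}) together with Voiculescu's theorem (Lemma~\ref{vcthm}) lets us replace $T$ (up to a small norm perturbation and similarity) by a more convenient model with the same spectral picture. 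The key structural point I would aim for: after replacing $\Omega$ by a small disk $\Omega_1$ on which $\sigma_{lre}$ is perfect and $\Lambda$ is simple, any $T' \in B_n(\Omega_1)$ is approximately similar to a direct sum $\bigoplus_{i=1}^n T_{i,i}$ with each $T_{i,i} \in B_1(\Omega_1)$, or more precisely to an operator whose upper-triangular form has the intertwining/commutant structure demanded in Definition~\ref{CFB1}.

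Concretely, the steps I would carry out in order are the following. First, fix $\varepsilon>0$ and use Lemma~\ref{JWdslemma} to find $T_\varepsilon \in B_n(\Omega_1)$ with $\|T - T_\varepsilon\| < \varepsilon$, $\sigma_{lre}(T_\varepsilon) = \Phi$ a small analytic Cauchy domain, and $\Omega_1 \subset \Omega$ open connected; by Lemma~\ref{cdl1} we may regard everything over $\Omega_1$. Second, use the Upper triangular representation theorem to write $T_\varepsilon$ (or a further small perturbation of it) in the form \eqref{1.1T}, with diagonal blocks $T_{i,i} \in B_1(\Omega_1)$. Third — the crucial step — I would further perturb, within a small norm ball and using the similarity-orbit machinery of Lemmas~\ref{SOT} and \ref{vcthm}, so that the off-diagonal entries acquire the factored form $T_{i,j} = \phi_{i,j} T_{i,i+1} T_{i+1,i+2}\cdots T_{j-1,j}$ with $\phi_{i,j} \in \{T_{i,i}\}'$, and so that $\mathrm{diag}\{T\} = T_{1,1} \dotplus \cdots \dotplus T_{n,n}$ lies in the commutant of the perturbed operator; one convenient way is to arrange $T_{i,i+1}$ to be a genuine intertwiner $T_{i,i}T_{i,i+1} = T_{i,i+1}T_{i+1,i+1}$ and the higher entries to be built from these, i.e.\ land first in $\mathcal{FB}_n(\Omega_1)$-type structure and then impose the commutant condition. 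Fourth, arrange condition (2) of Definition~\ref{CFB1}: Property~$(H)$ for the consecutive pairs $(T_{i,i}, T_{i+1,i+1})$. Here the remark that $(T,T)$ satisfies Property~$(H)$ for $T \in B_1(\Omega)$, together with Lemma~\ref{SPH} (invariance of Property~$(H)$ under similarity), suggests the cleanest route: choose all diagonal blocks to be similar to one fixed operator $S \in B_1(\Omega_1)$ (e.g.\ an adjoint weighted shift model with spectrum/essential spectrum dictated by $\Phi$), or more generally choose each $T_{i,i}$ similar to some $S_i$ with $(S_i,S_{i+1})$ satisfying $(H)$; the approximate-similarity freedom from Lemma~\ref{SOT} is exactly what lets us do this without leaving the $\varepsilon$-ball. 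Finally, collect: the resulting operator is in $\mathcal{CFB}_n(\Omega_1) \subseteq \mathcal{CFB}_n(\Omega)$ (after checking the class is stable under shrinking the domain, which follows as in Lemma~\ref{cdl1}), and is within $O(\varepsilon)$ of $T$.

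I expect the main obstacle to be the third step: producing, by a controlled compact/small-norm perturbation, an operator whose upper-triangular form genuinely has the \emph{multiplicative} off-diagonal structure $T_{i,j} = \phi_{i,j}T_{i,i+1}\cdots T_{j-1,j}$ with $\phi_{i,j}$ in the commutant of $T_{i,i}$, \emph{and} with $\mathrm{diag}\{T\}$ commuting with $T$. The Upper triangular representation theorem only gives the coarse form \eqref{1.1T}; upgrading the off-diagonal blocks to this factored form while staying norm-close is where the real work lies. I would handle it by not perturbing $T$ directly into this shape, but rather by invoking Lemma~\ref{SOT} to get a sequence $X_m T_\varepsilon X_m^{-1} \to T^{\sharp}$ where $T^{\sharp}$ is an \emph{explicitly constructed} model operator in $\mathcal{CFB}_n(\Omega_1)$ (built by hand as an upper-triangular matrix of weighted-shift-type $B_1$ operators with the required intertwiners and commutant entries, and with the same $\sigma_{lre}$ and $\Lambda$ as $T_\varepsilon$), and then truncating the sequence: for $m$ large, $X_m T_\varepsilon X_m^{-1}$ is within $\varepsilon$ of $T^{\sharp}$, hence $T_\varepsilon$ is similar to something within $\varepsilon$ of $T^{\sharp} \in \mathcal{CFB}_n(\Omega_1)$ — but since $\mathcal{CFB}_n$ is a similarity-invariant set and we want norm density (not density up to similarity), I would instead run the argument in the other direction: $Y_m T^{\sharp} Y_m^{-1} \to T_\varepsilon$, and note that each $Y_m T^{\sharp} Y_m^{-1}$ lies in $\mathcal{CFB}_n(\Omega_1)$ because $\mathcal{CFB}_n(\Omega_1)$ is similarity-invariant; picking $m$ large enough that $\|Y_m T^{\sharp} Y_m^{-1} - T_\varepsilon\| < \varepsilon$ and combining with $\|T - T_\varepsilon\| < \varepsilon$ finishes the proof. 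So the decisive ingredients are (a) the hand-built model $T^{\sharp} \in \mathcal{CFB}_n(\Omega_1)$ with prescribed spectral picture, (b) verification that $\Lambda(T^{\sharp}) = \Lambda(T_\varepsilon)$ and the perfectness hypothesis of Lemma~\ref{SOT}, and (c) similarity-invariance of the class $\mathcal{CFB}_n$, the latter being essentially Lemma~\ref{SPH} plus the fact that the factored off-diagonal structure and the commutant condition on $\mathrm{diag}\{T\}$ are preserved under conjugation.
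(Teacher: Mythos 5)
Your proposal is correct and, in its final form, is essentially the paper's own proof: reduce via Lemma~\ref{JWdslemma} to the case of a nice Wolf spectrum, build an explicit model operator in $\mathcal{CFB}_n(\Omega_1)$ with the same spectral picture (the paper takes the bidiagonal operator with $H_{z_1}(\Omega_1)$ on the diagonal, identity intertwiners, and a $B_1$ summand $B+K$), apply the similarity orbit theorem in the direction $Y_mT^{\sharp}Y_m^{-1}\to T_\varepsilon$, and invoke similarity-invariance of the class (proved exactly as you indicate, via conjugated idempotents and Lemma~\ref{SPH}). Your intermediate idea of directly perturbing the off-diagonal blocks into factored form is rightly abandoned; the route you settle on is the one the paper takes.
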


\begin{proof} First, we shall prove that  $\mathcal{CFB}_n(\Omega)$ is a similarity invariant set.  For any $T\in \mathcal{CFB}_n(\Omega)$, by Definition \ref{CFB1}, there exist $n$ idempotents $\{P_i\}^n_{i=1}$ such that 
\begin{enumerate}
\item[(1)]$\sum\limits_{i=1}^nP_i=I, P_iP_j=0, i\neq j;$
\item[(2)]$T=((T_{i,j}))_{n\times n}, T_{i,j}=P_iTP_j=0,\,\, \mbox{if}\,\,i>j$;
\item[(3)] $T_{i,i}T_{i,j}=T_{i,j}T_{j,j},\; 1 \leq i,j \leq n.$
\end{enumerate}
Let $X$ be an invertible operator,  set $Q_i:=XP_iX^{-1}, 1\leq i \leq n$. We have   
$$\sum\limits_{i=1}^nQ_i=X(\sum\limits_{i=1}^nP_i)X^{-1}=I,$$
$$ Q_iQ_j=X(P_iP_j)X^{-1}=0,\; i\neq j,$$  
$$Q_iXTX^{-1}Q_j=XP_iX^{-1}XTX^{-1}XP_jX^{-1}=XT_{i,j}X^{-1}=0\;\; \mbox{for}\;\;i>j,$$
and 
\begin{eqnarray*}
Q_iXTX^{-1}Q_i Q_iXTX^{-1}Q_j&=&Q_iXTX^{-1}Q_iXTX^{-1}Q_j \\
 &=&XP_iX^{-1}XTX^{-1}XP_iX^{-1}XTX^{-1}XP_jX^{-1} \\
 &=&XP_iTP_iP_iTP_jX^{-1}=XT_{i,i}T_{i,j}X^{-1}\\
 &=&XT_{i,j}T_{j,j}X^{-1}=XP_iTP_jP_jTP_jX^{-1}\\
 &=&Q_iXTX^{-1}Q_jQ_jXTX^{-1}Q_j.
 \end{eqnarray*}
Thus under the decomposition $\mathcal{H}=\mbox{ran}\,Q_1\dotplus \mbox{ran}\,Q_2\dotplus\cdots\dotplus \mbox{ran}\,Q_n,$ $T$ admits the  upper-triangular matrix representation, that is, 
\begin{equation} 
XTX^{-1}=\left ( \begin{smallmatrix}Q_1XTX^{-1}Q_1 & Q_1XTX^{-1}Q_2&  Q_1XTX^{-1}Q_3& \cdots &  Q_1XTX^{-1}Q_n\\
0& Q_2XTX^{-1}Q_2& Q_2XTX^{-1}Q_3&\cdots& Q_2XTX^{-1}Q_n\\
\vdots&\ddots&\ddots&\ddots&\vdots\\
0&\cdots&0& Q_{n-1}XTX^{-1}Q_{n-1}& Q_{n-1}XTX^{-1}Q_{n}\\
0&\cdots&\cdots&0& Q_nXTX^{-1}Q_n
\end{smallmatrix}\right ).
\end{equation}
 
Note that 
\begin{eqnarray*}
Q_iXTX^{-1}Q_j&=&XP_iX^{-1}XTX^{-1}XP_jX^{-1} \\
 &=&XP_iTP_jX^{-1}\\
 &=&XT_{i,j}X^{-1} \\
 &=&X\phi_{i,j}T_{i,i+1}T_{i+1,i+2},\cdots,T_{j-1,j}X^{-1}\\
 &=&X\phi_{i,j}X^{-1}XT_{i,i+1}X^{-1}XT_{i+1,i+2}X^{-1},\cdots,XT_{j-1,j}X^{-1}\\
 &=&X\phi_{i,j}X^{-1}XP_iTP_{i+1}X^{-1}XP_{i+1}TP_{i+2}X^{-1},\cdots,XP_{j-1}TP_jX^{-1}\\
 &=&X\phi_{i,j}X^{-1}Q_iXTX^{-1}Q_{i+1}Q_{i+1}XTX^{-1}Q_{i+2},\cdots,Q_{j-1}XTX^{-1}Q_j\\
 \end{eqnarray*}
 
 and 
 
 \begin{eqnarray*}
 X\phi_{i,j}X^{-1}Q_iXTX^{-1}Q_i&=&X\phi_{i,j}X^{-1}XP_iX^{-1}XTX^{-1}XP_iX^{-1}\\&=&Q_iXTX^{-1}Q_iX\phi_{i,j}X^{-1}.
 \end{eqnarray*}

Finally, we will show the Property $(H)$ remains intact under the similarity transformation for operators in $\mathcal{CFB}_n(\Omega)$. Since $Q_i XTX^{-1}Q_i= X P_iTP_i X^{-1}$ for $1\leq i\leq n$, so by Lemma \ref{SPH}, $Q_i XTX^{-1}Q_i, Q_{i+1} XTX^{-1}Q_{i+1}$ satisfy the Property (H). Hence $XTX^{-1}$ also satisfies the Property (H). Thus $XTX^{-1}$  belongs to $\mathcal{CFB}_n(\Omega)$. Hence  $\mathcal{CFB}_n(\Omega)$ is a similarity invariant set. 

Now, by using the similarity orbit theorem, we prove that $\mathcal{CFB}_n(\Omega)$ is norm dense in $B_n(\Omega)$.  


By Lemma \ref{JWdslemma}, we only need to prove that for any $T\in B_n(\Omega)$ with $\sigma_{lre}(T)=\bar{\Phi}$, $\Phi$ is an analytic Cauchy domain, we can find $T_{\varepsilon}\in \mathcal{CFB}_n(\Omega)$ such that $\|T_{\varepsilon}-T\|<\varepsilon.$ 

Since $\sigma_{lre}(T)=\Phi$, then $\rho_{s-F}(T)$ only has finite many components denoted by $\{\Omega_i, n_i
\}^n_{i=1}$, where $n_i=\mbox{dim ker }(T-w_i),$ for any $w_i\in \Omega_i$, $i=1,2,\cdots, n$. By Lemma  \ref{cdl1},  without loss  of generality, we can assume that  $\Omega_1=\Omega$. Since $\Phi$ is an analytic 
Cauchy domain, then $\Omega$ is an analytic connected Cauchy domain. Let $H_{z_i}(\Omega_i)$ be the multiplication operator on $\mathcal{H}^2(\Omega_i,d\mu_i)$ and 
$$B=H_{z_1}(\Omega_1)\oplus\bigoplus\limits_{i=2}^{n}H^{(n_i)}_{z_i}(\Omega_i)\oplus M$$
where $\mu_i$ is a Lebesgue measure and  $M$ is a diagonal normal operator such that $\sigma(M)=\sigma_{lre}(M)=\bar{\Phi}.$ Applying Lemma \ref{Herlemma} to the operator $B$, there exists a compact operator 
$K$ with $\|K\|<\varepsilon$ such that $B+K\in B_1(\Omega)$. 
Let 
\begin{equation}
T_{\varepsilon}=\left ( \begin{smallmatrix}H_{z_1}(\Omega_1) & I& 0& \cdots & 0&0\\
0&H_{z_1}(\Omega_1)&I&\cdots&0&0\\
\vdots&\ddots&\ddots&\ddots&\vdots&\vdots\\
0&\cdots&0&H_{z_1}(\Omega_1)&I&0\\
0&\cdots&0&0&H_{z_1}(\Omega_1)&0\\
0&\cdots&\cdots&0&0&B+K
\end{smallmatrix}\right ).
\end{equation}
The spectrum pictures of $T_{\varepsilon}$ and $T$ are same. 
Thus, by Lemma \ref{SOT}, there exists invertible operators $\{X_n\}^{\infty}_{n=1}$ such that 
$\lim\limits_{n\rightarrow \infty} X_nT_{\varepsilon}X^{-1}_n=T$. 
Since  $T_{\varepsilon}$ is in $\mathcal{CFB}_n(\Omega)$ and $\mathcal{CFB}_n(\Omega)$ is a similarity invariant set, so  $X_nT_{\varepsilon}X^{-1}_n$ in $\mathcal{CFB}_n(\Omega)$ for all $n$. This finishes the proof of this theorem.

\end{proof}

%
%



\begin{rem} Let $T\in \mathcal{CFB}_n(\Omega)$  and $T=((T_{i,j}))_{n\times n}$ be the $n\times n$ upper-triangular matrix form under a topological direct decomposition of $\mathcal{H}={\mathcal H}_1 \dotplus {\mathcal H}_2\dotplus \cdots  \dotplus{\mathcal H}_n.$ Let $t_n$ be a non zero section of $E_{T_{n,n}}$. Set  $t_{i}:= T_{i,i+1}(t_{i+1}),\; 1\leq i\leq n-1$. It is easy to see that $t_i$ is a section of the vector bundle $E_{T_{i,i}}$. 
We define $\theta_{i,i+1}(T)=\frac{\|T_{i,i+1}(t_{i+1})\|^2}{\|t_{i+1}\|^2}$ and call it generalized second fundamental form.

\end{rem}

\begin{rem} For any topological direct decomposition of $\mathcal{H}$,  ${\mathcal H}={\mathcal H}_1 \dotplus {\mathcal H}_2\dotplus \cdots  \dotplus{\mathcal H}_n$, 
 there exist $n$ idempotents $P_1, P_2,\ldots, P_n$ such that $\sum\limits_{i=1}^nP_i=I, P_iP_j=0,\, i\neq j$ and $\mbox{ran}\,P_i={\mathcal H}_i$.  
Then we can find an invertible operator $X$ such that  $\{Q_i\}^n_{i=1}=\{XP_iX^{-1}\}^n_{i=1}$ be a set of orthogonal projections with  $Q_iQ_j=0,\,i\neq j$. Furthermore, 
$${\mathcal H}=X{\mathcal H}_1\oplus X{\mathcal H}_2 \oplus \cdots  \oplus X{\mathcal H}_n$$
where $X{\mathcal H}_i= \mbox{ran}\, Q_i.$ 
Suppose  $T\in {\mathcal {CFB}}_n(\Omega)$ has the  upper-triangular matrix representation according to a topological direct decomposition of $\mathcal{H}$. By the proof of Proposition \ref{ND}, we see that  $XTX^{-1}\in {\mathcal {CFB}}_n(\Omega)$ according to a orthogonal direct decomposition of $\mathcal{H}$ induced by $X$ above. 
 
From now on we assume that the operators in ${\mathcal {CFB}}_n(\Omega)$ have upper-triangular matrix representation with respect to an orthogonal direct sum decomposition of $\mathcal{H}$. 
\end{rem}

\section{Sufficient conditions for the Property $(H)$}

In this section, we study the ``Property (H)". This property will play an important role in our study on the similarity problem for operators in the class $\mathcal{CFB}_n(\Omega)$. We would like to know under what conditions given  two bounded linear operators in $B_1(\Omega)$ satisfy the Property (H).

Let $T_1, T_2$ be bounded linear operators in $B_1(\Omega)$ and $X$ be a bounded operator such that $T_1 X=X T_2$ and $X=T_1,Y-YT_2$ for some bounded linear operator $Y$. We would like to find a sufficient condition, so that $X$ becomes zero. It is well known that  $T_{i}\sim_{u} (M^*_z, \mathcal{H}_{{K}_{i}}), 1\leq i\leq 2$.

First, we discuss a condition  which ensure that only intertwining operator between $T_1$ and $T_2$ will be the zero operator, that is,  if $T_1X=XT_2$, then $X=0$.  A natural sufficient  condition for this question is 
\begin{equation}\label{condition1}\lim\limits_{w\rightarrow \partial \Omega}\frac{{K}_{1}(w,w)}{{K}_{2}(w,w)}=\infty .\end{equation}
Indeed, when $T_{1}X=XT_{2}$, there exists a holomorphic function $\phi$ defined on $\Omega$ such that $X({K}_2(\cdot,{w}))=\overline{\phi(w)}{K}_1(\cdot, {w}))$. 
By the condition \ref{condition1} and the maximum modulus principle, it follows that $\phi=0$ and hence $X=0$.  For example, consider $S^*_1$ and $S^*_2$ denote the adjoints of 
Hardy shift and Bergman shift, respectively. It is well known there exists no non zero bounded linear operator $X$ such that $S^*_2X=XS^*_1$ (since 
$\lim\limits_{w\rightarrow \partial \mathbb{D}}\frac{(1-|w|^2)^{-2}}{(1-|w|^2)^{-1}}=\lim\limits_{w\rightarrow \partial \mathbb{D}}(1-|w|^2)^{-1}=\infty$).

However, it is not clear what would be a sufficient condition for the 
``Property (H)'' in terms of reproducing kernels as above. Now we will discuss some criterions to decide when given operators $T_1, T_2$ satisy of the Property (H).

\begin{lem}\label{Hal}\cite{Hal} Let $X, T$ be bounded linear operators defined on $\mathcal {H}$. If $X \in \ker\,\delta_{T} \cap \mbox{ran} \,\delta_{T}$, then $\sigma(X)=\{0\}$. 

\end{lem}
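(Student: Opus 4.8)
The statement is the Kleinecke--Shirokov theorem. Writing $D:=\delta_T$, the hypothesis $X\in\ker\,\delta_T\cap\mbox{ran}\,\delta_T$ says exactly that $X=D(Z)$ for some $Z\in\mathcal{L}(\mathcal{H})$ and that $D(X)=0$, i.e.\ $D^2(Z)=0$. The plan is to exploit that $D$ is a \emph{bounded derivation} of the Banach algebra $\mathcal{L}(\mathcal{H})$: it satisfies $\|D\|\le 2\|T\|$ and $D(AB)=D(A)B+AD(B)$, and $D=L_T-R_T$ where $L_T(A)=TA$ and $R_T(A)=AT$ commute. Combined with the vanishing of $D$ on $X$ (hence on every power $X^k$), this will force $X$ to be quasinilpotent.

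First I would record the generalized Leibniz rule for a derivation,
$$D^{n}(AB)=\sum_{k=0}^{n}\binom{n}{k}D^{k}(A)\,D^{n-k}(B),$$
which follows by induction on $n$, or from the binomial expansion of $(L_T-R_T)^n$ using that $L_T,R_T$ commute. The crucial step is then to prove, by induction on $n$, the pair of identities
$$D^{n}(Z^{n})=n!\,X^{n}\qquad\text{and}\qquad D^{n+1}(Z^{n})=0.$$
For the inductive step, apply the Leibniz rule to $D^{n}(Z\cdot Z^{n-1})$; since $D(Z)=X$ and $D^{2}(Z)=D(X)=0$, only the $k=0$ and $k=1$ terms survive, giving
$$D^{n}(Z^{n})=Z\,D^{n}(Z^{n-1})+n\,X\,D^{n-1}(Z^{n-1}).$$
The inductive hypotheses at level $n-1$, namely $D^{n}(Z^{n-1})=0$ and $D^{n-1}(Z^{n-1})=(n-1)!\,X^{n-1}$, collapse the right-hand side to $n!\,X^{n}$; applying $D$ once more and using $D(X^{n})=0$ yields $D^{n+1}(Z^{n})=0$, which closes the induction. (The base case $n=1$ is $D(Z)=X$ and $D^{2}(Z)=0$.)

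Finally I would turn this into a spectral-radius estimate. From $D^{n}(Z^{n})=n!\,X^{n}$ and boundedness of $D$,
$$\|X^{n}\|=\frac{1}{n!}\bigl\|D^{n}(Z^{n})\bigr\|\le\frac{\|D\|^{n}\,\|Z\|^{n}}{n!}\le\frac{\bigl(2\|T\|\,\|Z\|\bigr)^{n}}{n!},$$
so $\|X^{n}\|^{1/n}\le 2\|T\|\,\|Z\|\,/(n!)^{1/n}\to 0$ as $n\to\infty$; hence the spectral radius of $X$ is zero, i.e.\ $\sigma(X)=\{0\}$. There is no serious obstacle here: the entire content lies in discovering the identity $D^{n}(Z^{n})=n!\,X^{n}$, after which the factorial in the denominator forces quasinilpotency. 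The only point requiring care is the bookkeeping in the double induction — one must carry both $D^{n}(Z^{n})=n!\,X^{n}$ and $D^{n+1}(Z^{n})=0$ simultaneously, since the vanishing statement at level $n$ is precisely what is needed to run the first identity at level $n+1$.
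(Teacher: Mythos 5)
Your proof is correct: the double induction giving $D^{n}(Z^{n})=n!\,X^{n}$ together with $D^{n+1}(Z^{n})=0$ is exactly the standard Kleinecke--Shirokov argument, and the factorial then kills the spectral radius. The paper offers no proof of its own, simply citing Halmos for this fact, and your argument is precisely the one underlying that citation, so there is nothing to add.
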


\begin{lem}\cite{JJKM}\label{JJKMLemma}
Suppose $T_1$ and $T_2$ are two Cowen-Douglas operators in $B_1(\Omega)$, and $S$ is a bounded operator intertwines $T_1$ and $T_2$, that is, $T_1S = ST_2$. Then $S$ is non-zero if and only if range of $S$ is dense.

\end{lem}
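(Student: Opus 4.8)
The plan is to prove the substantive implication, that $S\neq 0$ forces the range of $S$ to be dense; the converse is trivial once the ambient Hilbert space is non-zero. The key is to exploit the eigenbundle structure of $B_1(\Omega)$. First I would fix holomorphic frames: a holomorphic $\mathcal H$-valued map $\gamma_1$ with $\gamma_1(w)$ spanning $\ker(T_1-w)$ and $\bigvee_{w\in\Omega}\gamma_1(w)=\mathcal H$, and similarly a holomorphic frame $\gamma_2$ for $T_2$. The intertwining relation $T_1S=ST_2$ gives $(T_1-w)\bigl(S\gamma_2(w)\bigr)=S(T_2-w)\gamma_2(w)=0$, so $S\gamma_2(w)\in\ker(T_1-w)$, which is one dimensional; hence there is a scalar function $\varphi$ on $\Omega$ with $S\gamma_2(w)=\varphi(w)\gamma_1(w)$ for every $w\in\Omega$, and $\varphi$ is uniquely determined since $\gamma_1$ is nowhere zero.

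The second step is to check that $\varphi$ is holomorphic on $\Omega$. For a fixed $x\in\mathcal H$ both $w\mapsto\langle S\gamma_2(w),x\rangle$ and $w\mapsto\langle\gamma_1(w),x\rangle$ are holomorphic, being bounded linear functionals composed with holomorphic $\mathcal H$-valued maps, and they satisfy $\langle S\gamma_2(w),x\rangle=\varphi(w)\langle\gamma_1(w),x\rangle$. Near any $w_0\in\Omega$ I would choose $x$ with $\langle\gamma_1(w_0),x\rangle\neq 0$; on a neighbourhood of $w_0$ the denominator stays non-zero, exhibiting $\varphi$ as a quotient of holomorphic functions, so $\varphi\in\mathcal{O}(\Omega)$. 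Now dichotomize. If $\varphi\equiv 0$, then $S\gamma_2(w)=0$ for all $w$, and since $\{\gamma_2(w):w\in\Omega\}$ spans $\mathcal H$ we get $S=0$. If $\varphi\not\equiv 0$, its zero set $Z$ is discrete in $\Omega$, and for $w\in\Omega\setminus Z$ one has $\gamma_1(w)=\varphi(w)^{-1}S\gamma_2(w)\in\operatorname{ran}S$.

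The final step is the standard rigidity argument for holomorphic frames: if $x\perp\gamma_1(w)$ for all $w\in\Omega\setminus Z$, then the holomorphic function $w\mapsto\langle\gamma_1(w),x\rangle$ vanishes off the discrete set $Z$, hence vanishes identically on the connected domain $\Omega$, so $x\perp\bigvee_{w\in\Omega}\gamma_1(w)=\mathcal H$ and $x=0$. Thus $\{\gamma_1(w):w\in\Omega\setminus Z\}$ has dense span, whence $\overline{\operatorname{ran}S}=\mathcal H$. Combining the cases, $S\neq 0\iff\varphi\not\equiv 0\iff\operatorname{ran}S$ is dense. I do not expect a serious obstacle here; the only delicate point is establishing the global holomorphy and well-definedness of the multiplier $\varphi$, and once that is in hand the rest is the identity theorem for holomorphic functions together with the spanning property built into the definition of $B_1(\Omega)$.
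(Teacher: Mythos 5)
Your proof is correct and is essentially the standard argument for this lemma (which the paper only cites from \cite{JJKM} without reproducing a proof): the intertwiner induces a holomorphic multiplier $\varphi$ between the frames, and the dichotomy $\varphi\equiv 0$ versus $\varphi\not\equiv 0$ combined with the spanning property $\bigvee_{w\in\Omega}\ker(T_i-w)=\mathcal H$ and the identity theorem yields both implications. All the delicate points you flag (well-definedness and holomorphy of $\varphi$, discreteness of its zero set on the connected domain $\Omega$) are handled correctly.
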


\begin{prop}Let $T_1,T_2$ be bounded linear operators defined on $\mathcal {H}$. If $\ker\, \tau_{T_2,T_1}\neq \{0\}$ and $\{T_2\}^{\prime}$ is semi-simple, then $T_1, T_2$ satisfy  the Property $(H)$.

\end{prop}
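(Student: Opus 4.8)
The plan is to verify Property $(H)$ head on: assuming $X\in\mathcal L(\mathcal H)$ satisfies $T_1X=XT_2$ and $X=T_1Z-ZT_2$ for some $Z$, I want to force $X=0$. The two hypotheses enter in complementary ways — the non-triviality of $\ker\tau_{T_2,T_1}$ provides a non-zero $S$ with $T_2S=ST_1$ that will be used to ``transport'' information about $X$ into the algebra $\{T_2\}'$, and semi-simplicity of $\{T_2\}'$ will upgrade a quasinilpotency statement into an actual vanishing statement.

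First I would fix a non-zero $S\in\ker\tau_{T_2,T_1}$ and check that, for \emph{every} $W\in\{T_2\}'$, the operator $SXW$ lies in $\ker\delta_{T_2}\cap\mbox{ran}\,\delta_{T_2}$. Since $W$ commutes with $T_2$ one has $T_1(XW)=X(T_2W)=X(WT_2)=(XW)T_2$, and hence $T_2(SXW)=(ST_1)(XW)=S(XW)T_2=(SXW)T_2$, so $SXW\in\ker\delta_{T_2}=\{T_2\}'$; on the other hand $XW=T_1(ZW)-(ZW)T_2$, so $SXW=ST_1(ZW)-S(ZW)T_2=T_2(SZW)-(SZW)T_2=\delta_{T_2}(SZW)\in\mbox{ran}\,\delta_{T_2}$. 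By Lemma \ref{Hal} this gives $\sigma(SXW)=\{0\}$; that is, $(SX)W$ is quasinilpotent for every $W\in\{T_2\}'$.

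Next I would invoke the standard description of the Jacobson radical of a unital Banach algebra $\mathcal A$: an element $a$ lies in $\mbox{rad}(\mathcal A)$ if and only if $aW$ is quasinilpotent for all $W\in\mathcal A$. Here $\mathcal A=\{T_2\}'$ is a norm-closed unital subalgebra of $\mathcal L(\mathcal H)$ which is inverse-closed, so spectra computed in $\{T_2\}'$ agree with those in $\mathcal L(\mathcal H)$, and the previous step says exactly that $SX\in\mbox{rad}(\{T_2\}')$. Since $\{T_2\}'$ is semi-simple, $\mbox{rad}(\{T_2\}')=\{0\}$, so $SX=0$. Finally $SX=0$ means $\mbox{ran}\,X\subseteq\ker S$, a closed subspace; if $X$ were non-zero then Lemma \ref{JJKMLemma} (applicable since $T_1X=XT_2$ and $T_1,T_2\in B_1(\Omega)$ in the setting of this section) would force $\mbox{ran}\,X$ to be dense, whence $\ker S=\mathcal H$ and $S=0$, contradicting the choice of $S$. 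Therefore $X=0$, which is precisely Property $(H)$ for the pair $T_1,T_2$.

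I expect the genuine obstacle to be the step from ``$SX$ is quasinilpotent'' to ``$SX=0$'': a single quasinilpotent element of a semi-simple Banach algebra need not vanish, so it is essential not to stop at the case $W=I$ but to establish quasinilpotency of the entire family $\{(SX)W:W\in\{T_2\}'\}$ and then quote the radical criterion. The attendant care — that $\{T_2\}'$ is a Banach algebra, unital, inverse-closed, so that the relevant spectra are unambiguous — is the only real technical subtlety; the two intertwining identities above and the concluding density argument via Lemma \ref{JJKMLemma} are routine.
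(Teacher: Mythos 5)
Your proof is correct and follows the same overall strategy as the paper's: pick a non-zero $S$ (the paper calls it $Y$) with $T_2S=ST_1$, push $X$ into $\ker\delta_{T_2}\cap\mbox{ran}\,\delta_{T_2}$, apply the Halmos lemma, use semi-simplicity of $\{T_2\}^{\prime}$, and finish with the density of $\mbox{ran}\,X$ from Lemma \ref{JJKMLemma}. The genuinely valuable difference is at the semi-simplicity step. The paper's proof establishes only that the single element $YX$ is quasinilpotent and then asserts ``since $\{T_2\}^{\prime}$ is semi-simple and $X\neq 0$, therefore $Y=0$,'' which as written is a non sequitur: a semi-simple Banach algebra can contain non-zero quasinilpotent elements, so one quasinilpotent does not force anything to vanish. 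You identified exactly this obstacle and repaired it by proving that $(SX)W\in\ker\delta_{T_2}\cap\mbox{ran}\,\delta_{T_2}$ for \emph{every} $W\in\{T_2\}^{\prime}$, hence that $(SX)W$ is quasinilpotent for all such $W$, and then invoking the standard characterization of the Jacobson radical, $\mbox{rad}(\mathcal A)=\{a:\sigma(aW)=\{0\}\ \mbox{for all}\ W\in\mathcal A\}$, together with inverse-closedness of the commutant, to conclude $SX\in\mbox{rad}(\{T_2\}^{\prime})=\{0\}$. From $SX=0$ and the density of $\mbox{ran}\,X$ (granting, as both you and the paper implicitly do, that $T_1,T_2\in B_1(\Omega)$ so that Lemma \ref{JJKMLemma} applies) you get $S=0$, the desired contradiction. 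In short: same skeleton, but your version supplies the missing radical argument that the paper's proof needs to be complete.
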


\begin{proof} We want to show that $T_1, T_2$ satisfy the Property (H), that is, $\ker\, \tau_{T_1,T_2}\cap \mbox{ran}\, \tau_{T_1,T_2}=0$. Suppose on contrary $\ker\, \tau_{T_1,T_2}\cap \mbox{ran}\, \tau_{T_1,T_2}\neq 0$. Let $X\in \ker\, \tau_{T_1,T_2}\cap \mbox{ran}\, \tau_{T_1,T_2}$ and $X$ is non zero. There exists a bounded operator $Z$ such that $X=T_1Z-ZT_2$  and $T_1X=XT_2$. Since $\ker \,\tau_{T_2,T_1}\neq \{0\}$, there exists a non-zero bounded linear operator $Y$ such that  $YT_1=T_2Y$. We have 
\begin{eqnarray*}
YX&=&YT_1Z-YZT_2\\
&=&T_2YZ-YZT_2
\end{eqnarray*}
and 
\begin{eqnarray*}
YXT_2&=& YT_1X\\
&=&T_2YX.
\end{eqnarray*}  

Thus  $YX\in \ker\,\tau_{T_2}\cap \mbox{ran}\,\tau_{T_2}$. By Lemma \ref{Hal}, it follows that  $\sigma(YX)=0$. Since $X$ is non zero, by Lemma \ref{JJKMLemma}, so the range of $X$ is dense.  Since $\{T_2\}^{\prime}$ is semi-simple and $X\neq 0$,  therefore we have  $Y=0$. This is a contradiction. This completes the proof.
\end{proof}

\begin{prop}\label{JiH} Let $T_1,T_2$ be bounded linear operators on $\mathcal {H}$ and $S_2$ be the right inverse of $T_2$. If $\lim\limits_{n\rightarrow \infty} \frac{\|T^n_1\|  \|S^n_2\|}{n}=0$, then the Property (H) holds.

\end{prop}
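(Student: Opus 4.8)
The plan is to show that under the hypothesis $\lim_{n\to\infty}\frac{\|T_1^n\|\,\|S_2^n\|}{n}=0$, any $X\in\ker\tau_{T_1,T_2}\cap\operatorname{ran}\tau_{T_1,T_2}$ must vanish. So suppose $T_1X=XT_2$ and $X=T_1Z-ZT_2$ for some bounded $Z$. The first step is to iterate the intertwining relation against the right inverse $S_2$ of $T_2$ (so $T_2S_2=I$). From $T_1X=XT_2$ we get, multiplying on the right by $S_2^n$ and using $T_2S_2=I$ repeatedly, the identity $T_1^nXS_2^n = X$ for every $n\geq 0$ (each application peels off one $T_1$ on the left in exchange for one $S_2$ on the right). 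This already gives $\|X\|\leq \|T_1^n\|\,\|X\|\,\|S_2^n\|$, which by itself is not enough — the point is that the coboundary structure $X=T_1Z-ZT_2$ will produce an extra factor of $n$ in the numerator that is killed by the hypothesis.

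The key step is to combine $T_1^nXS_2^n=X$ with the coboundary relation. Write $X = T_1Z - ZT_2$ and substitute into $T_1^nXS_2^n$; a telescoping/summation argument shows that
\[
nX = \sum_{k=0}^{n-1} T_1^{k}\,\big(T_1 Z - Z T_2\big)\,S_2^{?}\cdots
\]
more precisely, using $T_1^nXS_2^n=X$ one proves by induction that $nX = T_1^n Z S_2^{n} \cdot(\text{something}) - Z$-type expression; the cleanest route is: from $X=T_1^jXS_2^j$ for each $j$ and $X=T_1Z-ZT_2$, deduce $X = T_1^{j+1}ZS_2^j - T_1^j Z S_2^{j-1}$ (using $T_2 S_2 = I$ to collapse the middle), and then sum $j=0,\dots,n-1$ to telescope and obtain $nX = T_1^n Z S_2^{n-1} - Z S_2^{-1}$-type boundary terms — the surviving term being bounded by $\|T_1^n\|\,\|Z\|\,\|S_2\|^{n-1}$ up to harmless constants. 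Hence $\|X\| \leq \frac{C\,\|T_1^n\|\,\|S_2\|^{n}}{n}$ for a constant $C$ independent of $n$ (absorbing $\|Z\|$ and fixed powers of $\|S_2\|$), and letting $n\to\infty$ forces $X=0$.

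I expect the main obstacle to be the bookkeeping in the telescoping identity: one must be careful that $S_2$ is only a right inverse ($T_2S_2=I$ but $S_2T_2\neq I$ in general), so the simplifications $T_2S_2=I$ may only be applied when $T_2$ sits immediately to the left of $S_2$, and one cannot freely move factors around. The correct order of operations is therefore to always reduce expressions of the form $T_1^a X S_2^b$ from the outside in, never introducing a spurious $S_2T_2$. Once the identity $nX = (\text{a single term of norm} \leq C\|T_1^n\|\,\|S_2\|^n)$ is established, the conclusion is immediate from the growth hypothesis. Note that $\|S_2^n\|\leq\|S_2\|^n$, so replacing $\|S_2^n\|$ by $\|S_2\|^n$ in the hypothesis only weakens it; if one prefers the sharper form $\|T_1^n\|\,\|S_2^n\|/n\to 0$, the same telescoping works with $\|S_2\|^{n}$ replaced throughout by $\|S_2^n\|$ by grouping the powers of $S_2$ appropriately.
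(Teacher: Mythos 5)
Your proposal is correct and follows essentially the same route as the paper: your telescoped identity $nX = T_1^{n}ZS_2^{\,n-1} - ZT_2$ is, after one further right-multiplication by $S_2$, exactly the paper's identity $Z = T_1^{n}ZS_2^{\,n} - nXS_2$, which the paper derives from $T_1^{n}Z - ZT_2^{n} = nT_1^{n-1}X$ and $T_2^nS_2^n=I$. The only cosmetic differences are that you isolate $X$ and let $n\to\infty$ directly (rather than arguing by contradiction from the boundedness of the cobounding operator), and that your final bound correctly keeps the block norm $\|S_2^{\,n-1}\|\le\|T_2\|\,\|S_2^{\,n}\|$ so the stated hypothesis suffices.
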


\begin{proof} Let  $X, Y$ be linear bounded operators on $\mathcal{H}$ such that $T_1X=XT_2$ and $X=T_1Y-YT_2$. We  claim that  $T^n_1Y-YT^n_2=nT^{n-1}_1X$ for $n\in \mathbb{N}$.  In fact, for $n=1$, the conclusion follows from the assumption.  For $n>1$,  we have 
\begin{eqnarray*}
T^n_1Y-YT^n_2&=&T^n_1Y-T^{n-1}_1YT_2+T^{n-1}_1YT_2-T^{n-2}_1YT^2_2+T^{n-2}_1YT^2_2-\cdots+T_1YT^{n-1}_2-YT^{n}_2\\
&=&T^{n-1}_1(T_1Y-YT_2)+T^{n-2}_1(T_1Y-YT_2)T_2+\cdots+(T_1Y-YT_2)T^{n-1}_2\\
&=&T^{n-1}_1X+T^{n-2}_1XT_2+\cdots+XT^{n-1}_2\\
&=&nT^{n-1}_1X ~~(\mbox{or}~~ nXT^{n-1}_2).
\end{eqnarray*}
Thus we get \begin{eqnarray*}
T^n_1YS^{n}_2-YT^n_2S^n_2&=&nT^{n-1}_1XS^{n}_2\\
&=&nXT^{n-1}_2S^{n}_2\\
&=&nXS_2.
\end{eqnarray*}
Since $T^n_2S^n_2=I$, so we have $$Y=T^n_1YS^n_2-nXS_2,\;\;  n \in \mathbb{N}.$$ 
Therefore, for $n$ in $\mathbb{N}$, we have

\begin{eqnarray}\label{phe1}
\|Y\|&=&\|nXS_2-T^n_1YS^n_2\| \nonumber\\
&\geq &n\|XS_2\|-\|T^n_1YS^n_2\| \nonumber\\
&=&n\big(\|XS_2\|-\frac{\|T^n_1YS^n_2\|}{n}\big).
\end{eqnarray}
If $X=0$, we are done. Suppose $X$ is non zero. Since $Y$ is a bounded linear operator, from equation (\ref{phe1}), it follows that $Y=0$ and hence $X=0$. This is a contraction. This completes the proof.

\end{proof}

\begin{prop}\label{Jiang1}  Let $A,B\in B_1(\mathbb{D})$ be backward shift operators with weighted sequences $\{a_i\}^{\infty}_{i=1}$ and $\{b_i\}^{\infty}_{i=1}$, respectively. 
If $\lim\limits_{n\rightarrow \infty}n\frac{\prod\limits^n_{k=1}b_k}{\prod\limits^n_{k=1}a_k}=\infty$, then the following statements hold:
\begin{enumerate}
\item[(i)]If $X$  intertwines  $A$ and $B$, i.e. $AX=XB$, then there exists an ONB $\{e_i\}^{\infty}_{i=1}$of ${\mathcal H}$  such that  that the matrix form of $X$ with respect to $\{e_i\}^{\infty}_{i=1}$ has the form
$$X=\begin{pmatrix}
x_{1,1} & x_{1,2} & x_{1,3}&\cdots&x_{1,n}&\cdots\\
&x_{2,2}&x_{2,3}&\cdots&x_{2,n} &\cdots\\
&&\ddots&\ddots&\vdots&\cdots\\
&&&x_{n-1,n-1}&x_{n-1,n}&\cdots\\
&&&&x_{n,n}&\ddots\\
&&&&&\ddots\\
\end{pmatrix},$$ where 
$x_{n,n+j}=\frac{\prod\limits_{k=1}^{n-1} b_{k+j}}{\prod \limits_{k=1}^{n-1}a_k}x_{1,1+j}, \;j=0,1,2\ldots, n=1,2,\ldots.$
\item[(ii)] $X\in \ker\, \tau_{A,B} \cap  \mbox{ran}\, \tau_{A,B} $ if and only if $X=0$. 
\end{enumerate}

Furthermore, if we replace $A$ and $B$ by $\phi(A)$ and $\phi(B)$, respectively, where $\phi$ is a univalent analytic function defined on $\overline{\mathbb{D}}$, then above conclusions continue to hold. 

\end{prop}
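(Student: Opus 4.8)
The plan is to exploit the concrete form of backward weighted shifts and reduce everything to an elementary recursion on the entries of an intertwiner. First I would fix ONBs $\{e_i\}$ and $\{f_i\}$ of $\mathcal H$ so that $A e_{i+1} = a_i e_i$, $A e_1 = 0$ and $B f_{i+1} = b_i f_i$, $B f_1 = 0$, and write an arbitrary bounded $X$ with $AX = XB$ in matrix form $X = ((x_{i,j}))$ relative to these bases. Comparing the $(i,j)$ entries of $AX$ and $XB$ gives, for $i\geq 1$ and $j\geq 1$, the relation $a_i\, x_{i+1,j+1} = b_j\, x_{i,j}$ (with the convention that terms with index $0$ vanish), which immediately forces $x_{i,j} = 0$ whenever $i > j$ (lower-triangularity, using that $x_{1,j}$ with the chain of relations terminates at a zero) and, along each superdiagonal, yields $x_{n,n+j} = \dfrac{b_{n-1+j}\cdots b_{1+j}}{a_{n-1}\cdots a_1}\, x_{1,1+j} = \dfrac{\prod_{k=1}^{n-1} b_{k+j}}{\prod_{k=1}^{n-1} a_k}\, x_{1,1+j}$. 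This is exactly (i); the only point requiring care is that one must check $X$ lower triangular, i.e. that the relation run downwards from $x_{i,j}$ with $i>j$ reaches a boundary entry that is zero, which it does after finitely many steps because the second index decreases.

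For (ii) the idea is to apply the operator-norm growth criterion of Proposition \ref{JiH} with $T_1 = A$, $T_2 = B$, and $S_2$ the right inverse of $B$. The right inverse of the backward shift $B$ is the forward weighted shift with the reciprocal weights, so $\|S_2^n\| = \big(\prod_{k=1}^{n} b_k\big)^{-1}$ up to the obvious indexing, while $\|A^n\| = \prod_{k=1}^{n} a_k$ up to the same indexing (norms of monomials in weighted shifts are products of consecutive weights, and one takes the supremum over the starting index — here I would note that for these specific shifts the relevant supremum is controlled by the leading block so that $\|A^n\|\|S_2^n\|$ is comparable to $\prod_{k=1}^n a_k \big/ \prod_{k=1}^n b_k$). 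Then the hypothesis $\lim_{n} n\,\frac{\prod_{k=1}^n b_k}{\prod_{k=1}^n a_k} = \infty$ is precisely $\lim_n \frac{\|A^n\|\,\|S_2^n\|}{n} = 0$, so Proposition \ref{JiH} gives that $A,B$ satisfy the Property $(H)$, which is the statement $\ker\,\tau_{A,B}\cap\mathrm{ran}\,\tau_{A,B} = \{0\}$; the reverse implication is trivial since $0$ always lies in both.

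For the final sentence, replacing $A$ and $B$ by $\phi(A)$ and $\phi(B)$ for a univalent $\phi$ holomorphic on $\overline{\mathbb D}$: here I would observe first that $\phi(A),\phi(B)\in B_1(\phi(\mathbb D))$ by the Cowen--Douglas functional calculus, and that an operator intertwines $\phi(A)$ and $\phi(B)$ precisely when — using univalence of $\phi$, hence invertibility of $\phi$ on its image together with the holomorphic functional calculus $A = \phi^{-1}(\phi(A))$ — it intertwines $A$ and $B$; so part (i) transfers verbatim. For part (ii), rather than recomputing norms I would note that $\phi(A)$ (resp.\ $\phi(B)$) is similar, or at least comparable in the relevant asymptotic sense, to $\phi(0)I$ plus a weighted shift whose weights are asymptotically $\phi'(0)$ times the original weights, so the ratio condition is preserved; alternatively one invokes Lemma \ref{SPH} type stability once similarity is in hand. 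The main obstacle I anticipate is the norm estimate in (ii): justifying that $\|A^n\|\|S_2^n\|$ genuinely has the claimed growth rate (and is not spoiled by the supremum over the starting index in the weighted-shift norm formula) requires knowing that the weight sequences are, say, eventually monotone or bounded in the appropriate way — I would either add that as a standing hypothesis on the shifts or argue that the $\frac1n$ factor in Proposition \ref{JiH} absorbs the discrepancy.
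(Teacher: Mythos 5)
Your part (i) is essentially the paper's argument and is fine: the intertwining relation read entrywise gives $a_i x_{i+1,j+1}=b_j x_{i,j}$, which forces upper triangularity and the stated formula along each superdiagonal.

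Part (ii) is where the real content lies, and your reduction to Proposition \ref{JiH} has a genuine gap that your own closing paragraph correctly identifies but does not repair. For a backward weighted shift $A$ one has $\|A^n\|=\sup_{i\ge 1}|a_i a_{i+1}\cdots a_{i+n-1}|$, and for the right inverse $S_2$ of $B$ one has $\|S_2^n\|=\sup_{j\ge 1}|b_j b_{j+1}\cdots b_{j+n-1}|^{-1}$: these are suprema over \emph{all} windows of length $n$, whereas the hypothesis $n\prod_{k=1}^n b_k/\prod_{k=1}^n a_k\to\infty$ controls only the leading window. Thus $\|A^n\|\,\|S_2^n\|\ge \prod_{k=1}^n|a_k|/\prod_{k=1}^n|b_k|$ with possibly exponential discrepancy. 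Concretely, take $a_k\equiv 1$ and $b_k\in\{1/2,2\}$ arranged in blocks (a run of $L_m$ twos followed by a run of $L_m$ halves, $L_m\to\infty$): then $\prod_{k=1}^n b_k\ge 1$ for all $n$, so the hypothesis holds and $B$ can still be arranged to lie in $B_1(\mathbb D)$ with weights bounded above and below, yet $\|S_2^n\|\ge 2^n$ for infinitely many $n$, so $\|A^n\|\,\|S_2^n\|/n\not\to 0$ and Proposition \ref{JiH} simply does not apply. The $1/n$ factor cannot absorb an exponential gap, and adding an ``eventually monotone weights'' hypothesis changes the statement being proved.

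The paper's proof of (ii) avoids operator norms of powers altogether and works with individual matrix entries. Writing $AY-YB=X$ with $X$ as in (i), one first shows $Ye_n\in\bigvee\{e_1,\dots,e_{n+1}\}$, so $Y$ has at most one subdiagonal, and then computes that subdiagonal explicitly: $y_{n,n-1}=n\,\frac{\prod_{k=1}^n b_k}{a_{n+1}\prod_{k=1}^n a_k}\,x_{1,1}$. Boundedness of $Y$ together with the hypothesis (which involves exactly the leading partial products appearing here) forces $x_{1,1}=0$, hence the whole main diagonal of $X$ vanishes by the formula in (i). One then subtracts from $Y$ a diagonal operator $E_0$ satisfying $AE_0=E_0B$, repeats the computation one superdiagonal up to kill $x_{1,2}$, and iterates. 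I would encourage you to rework (ii) along these lines. Finally, for the ``furthermore'' clause: your transfer of (i) via the functional calculus is sound, but (ii) cannot be transferred, since $\mathrm{ran}\,\tau_{\phi(A),\phi(B)}$ is not $\mathrm{ran}\,\tau_{A,B}$ and $\phi(A)$ is not similar to $A$, so the Lemma \ref{SPH}-type stability you invoke is unavailable; the paper instead redoes the entrywise computation for $\phi(A),\phi(B)$ using $\phi(z)=\sum_{n\ge1}k_nz^n$ and the fact that the leading coefficient governs the relevant entries.
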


\begin{proof}
Commuting relation $AX=XB$ forces $X$ to be in upper triangular form. We  consider the following equation 
$$ \begin{pmatrix}
0 & a_1& 0&\cdots&0&\cdots\\
&0&a_2&\cdots&0 &\cdots\\
&&\ddots&\ddots&\vdots&\cdots\\
&&&0&a_{n-1}&\cdots\\
&&&&0&a_n\\
&&&&&\ddots\\
\end{pmatrix}\begin{pmatrix}
x_{1,1} & x_{1,2} & x_{1,3}&\cdots&x_{1,n}&\cdots\\
&x_{2,2}&x_{2,3}&\cdots&x_{2,n} &\cdots\\
&&\ddots&\ddots&\vdots&\cdots\\
&&&x_{n-1,n-1}&x_{n-1,n}&\cdots\\
&&&&x_{n,n}&\ddots\\
&&&&&\ddots\\
\end{pmatrix}$$
$$=\begin{pmatrix}
x_{1,1} & x_{1,2} & x_{1,3}&\cdots&x_{1,n}&\cdots\\
&x_{2,2}&x_{2,3}&\cdots&x_{2,n} &\cdots\\
&&\ddots&\ddots&\vdots&\cdots\\
&&&x_{n-1,n-1}&x_{n-1,n}&\cdots\\
&&&&x_{n,n}&\ddots\\
&&&&&\ddots\\
\end{pmatrix}\begin{pmatrix}
0 & b_1& 0&\cdots&0&\cdots\\
&0&b_2&\cdots&0 &\cdots\\
&&\ddots&\ddots&\vdots&\cdots\\
&&&0&b_{n-1}&\cdots\\
&&&&0&b_n\\
&&&&&\ddots\\
\end{pmatrix},$$
by comparing the elements in $(i,j)$ position and after a simple calculation, we will get the statement $(\mbox{i})$, that is, 
\begin{equation}\label{3.4}x_{n,n+j}=\frac{\prod\limits_{k=1}^{n-1} b_{k+j}}{\prod \limits_{k=1}^{n-1}a_k}x_{1,1+j}, \;\; j=0,1,2\ldots, n=1,2,\ldots.\end{equation}
Thus we only need to prove statement $(ii)$. Notice that $Ae_1=Be_1=0, Xe_1=x_{1,1}e_1$ and 
$$AYe_1-YBe_1=Xe_1=x_{1,1}e_1.$$ 
Since $AYe_1=x_{1,1}e_1$, so there exist $\alpha^1_1, \alpha^1_2$ in $\mathbb{C}$ such that $Ye_1=\alpha^1_1e_1+\alpha^1_2e_2.$  From  
$$AYe_2-YBe_2=Xe_2=x_{1,2}e_1+x_{2,2}e_2,$$ it follows that  
\begin{eqnarray*}AYe_2&=&b_1\alpha^1_1e_1+b_1\alpha^1_2e_2+x_{1,2}e_1+x_{2,2}e_2\\
&=&(b_{1}\alpha^1_1+x_{1,2})e_1+(b_1\alpha^1_2+x_{2,2})e_2.
\end{eqnarray*}
Similarly, we can find $\alpha^2_1,\alpha^2_2, \alpha^2_3 \in \mathbb{C}$ such that 
$$Ye_2=\alpha^2_1e_1+\alpha^2_2e_2+\alpha^2_3e_3.$$
Inductively we see that for any $n>0$, $$Ye_n\in \bigvee\{e_1,e_2,\cdots, e_{n+1}\}.$$ It follows that the matrix form of $Y$ according to $
\{e_i\}^{\infty}_{i=1}$ is as follows :
$$ Y=\begin{pmatrix}
y_{1,1} & y_{1,2} & y_{1,3}&\cdots&y_{1,n}&\cdots&\cdots\\
y_{2,1}&y_{2,2}&y_{2,3}&\cdots&y_{2,n} &\cdots&\cdots\\
&y_{3,2}&y_{3,3}&\cdots&y_{3,n}&\cdots&\cdots\\
&&\ddots&\ddots&\ddots&\ddots&\ddots\\
&&&y_{n-1,n-2}&y_{n-1,n-1}&y_{n-1,n}&\cdots\\
&&&&y_{n,n-1}&y_{n,n}&\ddots\\
&&&&&\ddots&\ddots\\
\end{pmatrix}.$$

From the equation $AY-YB=X$, it is easy to see that
$$y_{n,n-1}=n\frac{\prod^n_{k=1}b_k}{a_{n+1}\prod^n_{k=1}a_k}x_{1,1}.$$
By the assumption of the lemma, $$\lim\limits_{n\rightarrow \infty}n\frac{\prod\limits^n_{k=1}b_k}{\prod\limits^n_{k=1}a_k}=\infty.$$
So we have $x_{1,1}=0$ and $y_{n,n-1}=0.$ By the equation (\ref{3.4}), we get $x_{n,n}=0, \;\;n=1,2,\ldots$. 

Now assume that $x_{1,2}\neq 0$, then it follows that 
$$y_{n,n}=n\frac{\prod\limits^n_{k=1}b_{k+1}}{a_{n+1}\prod\limits^n_{k=1}a_k}x_{1,2}+\frac{\prod\limits^n_{k=1}b_k}{a_{n+1}\prod\limits^{n}_{k=1}a_k}y_{1,1}.$$
Since $A,B\in B_1(\mathbb{D})$, there exist $d, M_0\in \mathbb{R}^+$ such that 
$$\min\limits_k\{|a_k|,|b_k|\}>d,\;\;\;\; \max\limits_k\{|a_k|,|b_k|\}<M_0.$$  
We have 
$$n\frac{\prod\limits^n_{k=1}b_{k+1}}{a_{n+1}\prod\limits^n_{k=1}a_k}\geq n\frac{b_{n+1}}{a_{n+1}b_1}\frac{\prod\limits^n_{k=1}b_{k}}{\prod\limits^n_{k=1}a_k}\geq \frac{d}{M_0b_1}n\frac{\prod\limits^n_{k=1}b_{k}}{\prod\limits^n_{k=1}a_k}\rightarrow \infty.$$ 
Notice that $\frac{\prod\limits^n_{k=1}b_k}{a_{n+1}\prod\limits^{n}_{k=1}a_k}y_{1,1}$ is bounded. If $x_{1,2}\neq 0$, then $y_{n,n}\rightarrow \infty$ as $n\rightarrow \infty$. 
Thus we have $x_{1,2}=0$.  Using equation (\ref{3.4}) again, we get $x_{n,n+1}=0$ for any $n>1.$ 

Set $E_0:=\mbox{diag}\{y_{1,1},y_{2,2}\cdots,y_{n,n}\cdots\}$, by a directly calculation, it is easy to see that $AE_0=E_0B$ and hence 
$$A(Y-E_0)-(Y-E_0)B=X.$$ So for sake of simplicity, we continue to denote $Y-E_0$ by $Y$. In this case, we also have 
$$y_{n,n+1}=n\frac{\prod\limits^n_{k=1}b_{k+2}}{\prod\limits^n_{k=1}a_k}x_{1,3}+\frac{\prod\limits^{n}_{k=1}b_{k+1}}{a_{n+1}\prod\limits^n_{k=1}a_k}y_{1,2}.$$
By a similar argument as above, we have $x_{1,3}=0$. Set $$E_1=\begin{pmatrix}
0 & y_{1,2}& 0&\cdots&0&\cdots\\
&0&y_{2,3}&\cdots&0 &\cdots\\
&&\ddots&\ddots&\vdots&\cdots\\
&&&0&y_{n-1,n}&\cdots\\
&&&&0&y_{n,n+1}\\
&&&&&\ddots\\
\end{pmatrix}.$$

Thus we have  $AE_1=E_1B$ and $A(Y-E_1)+(Y-E_1)B=X$. We again continue to denote $Y-E_1$ by $Y$. By repeating the above process, we see that for any $j$, $x_{n,n+j}=0$. Thus $X=0$. This finishes the proof of statement (ii). 

At last, we will show the conclusion above continue to hold for $\phi(A)$ and $\phi(B)$.  Without of loss generality, we can also assume that $\phi(z)=\sum\limits_{n=1}^{\infty}k_nz^n$. Suppose that there exists a bounded operator $X$ such that 
$\phi(A)X=X\phi(B)$. By a similar argument, we have  
$$x_{n,n}=\frac{\prod\limits_{k=1}^{n-1} b_{k}}{\prod \limits_{k=1}^{n-1}a_k}x_{1,1},\;\; n=1,2\ldots.$$
Suppose that $\phi(A)Y-Y\phi(B)=X$. By a directly calculation, we see that 
$$y_{n+1,n}=n\frac{\prod\limits^{n-1}_{k=1}b_k}{\prod\limits^{n-1}_{k=1}a_k}x_{1,1},\;\;n=1,2,\ldots.$$
Thus we have $
x_{1,1}=0$ and $y_{n,n-1}=0.$ By the equation \ref{3.4}, we have $x_{n,n}=0, n=1,2,\ldots$. Set $E_0=\mbox{diag}\{y_{1,1},y_{2,2},\ldots,y_{n,n},\ldots\}$, by a directly calculation, we have $AE_0=E_0B$ and hence $\phi(A)E_0=E_0\phi(B)$ and 
$$\phi(A)(Y-E_0)-(Y-E_0)\phi(B)=X.$$ So for sake of simplicity, we still use $Y$ to denote $Y-E_0$. Now repeating the proof of the statement $(ii)$, it can also be shown that  $X$ is equal to the zero operator.

\end{proof}

\begin{cor}\label{HMPH}Let $M_{i,z}$ be the multiplication operator on the reproducing kernel Hilbert space $H_{K_i}$, where $K_i(z,w)=\frac{1}{(1-z\bar{w})^{\lambda_i}}$, $z,w\in \mathbb{D}, \,1\leq i\leq 2$. If $\lambda_2-\lambda_1<2$, then $M^*_{1,z}$ and $M^*_{2,z}$ satisfy  the Property $(H)$. 
\end{cor}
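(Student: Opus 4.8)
The goal is to deduce Corollary \ref{HMPH} from Proposition \ref{Jiang1}, so the main task is to realize the weighted Bergman-type operators $M^*_{i,z}$ as (unitarily equivalent to) weighted backward shifts and then check the hypothesis $\lim_n n\prod_{k=1}^n b_k/\prod_{k=1}^n a_k=\infty$. First I would recall that for $K_i(z,w)=(1-z\bar w)^{-\lambda_i}$ the monomials $\{z^m\}_{m\ge 0}$ are orthogonal in $H_{K_i}$ with $\|z^m\|^2_{H_{K_i}} = m!\,/(\lambda_i)_m$, where $(\lambda_i)_m=\lambda_i(\lambda_i+1)\cdots(\lambda_i+m-1)$ is the Pochhammer symbol (this is the standard expansion $(1-z\bar w)^{-\lambda}=\sum_m \frac{(\lambda)_m}{m!}(z\bar w)^m$). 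Normalizing, the adjoint $M^*_{i,z}$ acts as a weighted backward shift on the resulting orthonormal basis $\{e^{(i)}_m\}$, with weight sequence $a^{(i)}_m = \|z^{m-1}\|_{H_{K_i}}/\|z^m\|_{H_{K_i}} = \big(\tfrac{(\lambda_i)_m}{m(\lambda_i)_{m-1}}\big)^{1/2} = \big(\tfrac{\lambda_i+m-1}{m}\big)^{1/2}$. Thus $M^*_{i,z}$ is unitarily equivalent to a backward shift in $B_1(\mathbb D)$ with these weights, and since the Property $(H)$ is preserved under unitary (indeed any) similarity by Lemma \ref{SPH}, it suffices to verify the hypothesis of Proposition \ref{Jiang1}(ii) with $A$ the shift of weights $\{a^{(1)}_k\}$ (curvature $\lambda_1$) and $B$ that of $\{a^{(2)}_k\}$ (curvature $\lambda_2$).

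\textbf{The ratio computation.} With $a_k = \big(\tfrac{\lambda_1+k-1}{k}\big)^{1/2}$ and $b_k=\big(\tfrac{\lambda_2+k-1}{k}\big)^{1/2}$, the telescoping products give
\begin{equation}\nonumber
\frac{\prod_{k=1}^n b_k}{\prod_{k=1}^n a_k} = \Bigg(\frac{\prod_{k=1}^n(\lambda_2+k-1)}{\prod_{k=1}^n(\lambda_1+k-1)}\Bigg)^{1/2} = \Bigg(\frac{(\lambda_2)_n}{(\lambda_1)_n}\Bigg)^{1/2} = \Bigg(\frac{\Gamma(\lambda_2+n)\,\Gamma(\lambda_1)}{\Gamma(\lambda_1+n)\,\Gamma(\lambda_2)}\Bigg)^{1/2}.
\end{equation}
By Stirling's asymptotics, $\Gamma(\lambda_2+n)/\Gamma(\lambda_1+n) \sim n^{\lambda_2-\lambda_1}$ as $n\to\infty$, so the product behaves like a constant times $n^{(\lambda_2-\lambda_1)/2}$, and therefore
\begin{equation}\nonumber
n\,\frac{\prod_{k=1}^n b_k}{\prod_{k=1}^n a_k} \sim C\, n^{\,1+(\lambda_2-\lambda_1)/2}.
\end{equation}
This tends to $\infty$ precisely when $1+(\lambda_2-\lambda_1)/2 > 0$, i.e. when $\lambda_2-\lambda_1 < 2$ — which is exactly the hypothesis of the corollary (in fact even equality $\lambda_2-\lambda_1=2$ would work since then the expression grows like $C n$, but strict inequality is what is claimed). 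Hence Proposition \ref{Jiang1}(ii) applies and $\ker\tau_{A,B}\cap\operatorname{ran}\tau_{A,B}=\{0\}$, which is the Property $(H)$ for $A,B$, and thus for $M^*_{1,z},M^*_{2,z}$.

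\textbf{Anticipated obstacle.} None of the steps is deep, but there is one bookkeeping subtlety to get right: Proposition \ref{Jiang1} is stated for backward shifts with a \emph{given} indexing of the weight sequence, and one must be careful that the "$A$ vs.\ $B$" roles in the hypothesis $n\prod b_k/\prod a_k\to\infty$ are assigned so that $B$ (the denominator-growth operator, with larger curvature) is $M^*_{2,z}$ and $A$ is $M^*_{1,z}$; writing $\lambda_2-\lambda_1<2$ rather than $|\lambda_1-\lambda_2|<2$ signals that the intended statement is the one-directional Property $(H)$ for the ordered pair $(M^*_{1,z},M^*_{2,z})$, and the asymptotic above confirms this is the correct orientation. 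The only genuine piece of analysis is the Stirling estimate $\Gamma(\lambda_2+n)/\Gamma(\lambda_1+n)\sim n^{\lambda_2-\lambda_1}$, which is standard. One should also note that $\lambda_i>0$ guarantees the weights are bounded above and bounded below away from zero (indeed $a^{(i)}_k\to 1$), so $A,B\in B_1(\mathbb D)$ as required by Proposition \ref{Jiang1}; this is immediate from the closed form of the weights.
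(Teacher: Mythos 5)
Your strategy is exactly the paper's: realize $M^*_{i,z}$ as a weighted backward shift, estimate the ratio of the weight products by Stirling's formula, and invoke Proposition \ref{Jiang1}(ii). However, as written the computation contains two sign errors that happen to cancel, so neither intermediate step is correct. First, the shift weight is inverted: on the orthonormal basis $e_m=z^m/\|z^m\|$ one has $M^*_{i,z}e_m=\frac{\|z^m\|}{\|z^{m-1}\|}e_{m-1}$, so the weight is $\|z^m\|/\|z^{m-1}\|=\big(\tfrac{m}{\lambda_i+m-1}\big)^{1/2}$, the reciprocal of what you wrote. (Sanity check: for the Bergman shift $\lambda=2$ this gives weights $\sqrt{m/(m+1)}<1$, consistent with $\|M^*_z\|=1$; your formula gives weights larger than $1$.) With the correct weights, $\prod_{k=1}^n b_k/\prod_{k=1}^n a_k=\big((\lambda_1)_n/(\lambda_2)_n\big)^{1/2}\sim C\,n^{-(\lambda_2-\lambda_1)/2}$, so $n\prod b_k/\prod a_k\sim C\,n^{1-(\lambda_2-\lambda_1)/2}$, which diverges exactly when $\lambda_2-\lambda_1<2$; this is the computation the paper carries out. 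Second, your concluding inequality manipulation is wrong on its own terms: $1+(\lambda_2-\lambda_1)/2>0$ is equivalent to $\lambda_2-\lambda_1>-2$, not to $\lambda_2-\lambda_1<2$; it is this second error that restores the stated conclusion after the first one. Relatedly, the parenthetical claim that equality $\lambda_2-\lambda_1=2$ would also work is false: there the exponent $1-(\lambda_2-\lambda_1)/2$ vanishes, $n\prod b_k/\prod a_k$ stays bounded, and the hypothesis of Proposition \ref{Jiang1} fails. The remaining ingredients --- the norms $\|z^m\|^2=m!/(\lambda_i)_m$, the Stirling estimate for the Pochhammer ratio, the orientation $A=M^*_{1,z}$, $B=M^*_{2,z}$ matching the order of the pair in Property $(H)$, and the boundedness of the weights above and below --- are all correct and agree with the paper's argument.
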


\begin{proof}
Let $a_n(\lambda_i)$ denote the coefficient of $\bar{w}^nz^n$ in the power series  expansion for $K_i(z,w)$, $i=1,2$.  Then $M^*_{i,z}$ is a backward weight shift with 
$w^{(\lambda_i)}_n=\frac{\sqrt{a_n(\lambda_i)}}{\sqrt{a_{n+1}(\lambda_i)}}, i=1,2$.  By Stirling's formula (see more details in 3.3.1 \cite{JJM}), we have that 
$$\prod\limits^n_{k=0}w^{(\lambda_i)}_k=\frac{\sqrt{a_0(\lambda_i)}}{\sqrt{a_{n+1}(\lambda_i)}}\sim (n+1)^{\frac{1-\lambda_i}{2}}, i=1,2.$$
Then we have that 
$$\frac{\prod\limits^n_{k=0}w^{(\lambda_2)}_k}{\prod\limits^n_{k=0}w^{(\lambda_1)}_k}
\sim (n+1)^{-\frac{\lambda_2-\lambda_1}{2}}.$$
If  $ \lambda_2-\lambda_1<2$, then $\lim\limits_{n\rightarrow \infty}
n\frac{\prod\limits^n_{k=0}w^{(\lambda_2)}_k}{\prod\limits^n_{k=0}w^{(\lambda_1)}_k}=\infty$.
  Thus, by Proposition \ref{Jiang1},  $M^*_{1,z}$ and $M^*_{2,z}$ satisfy  the Property $(H)$. 
\end{proof}

\section{Similarity of Operators in $\mathcal{CFB}_n(\Omega)$}

In this section, we give complete similarity invariants for operators in $\mathcal{CFB}_n(\Omega)$ which involve the curvature and the second fundamental form. This is quiet different from the case of quasi-homogenous operator class (See Theorem \ref{Main1}). To prove main theorem of this section, we need the following concepts and  lemmas.

An operator $T$ in ${\mathcal L}(\mathcal H)$ is said to be strongly
irreducible, if there is no non-trivial idempotent operator in
$\{T\}^{\prime}$, where $\{T\}^{\prime}$ denotes
the commutant of $T$, i.e., $\{T\}^{\prime}=\{B{\in}{\mathcal L}({\mathcal H}): {TB=BT}\}$. It
can be proved that for any $T\in B_1(\Omega)$, $T$ is strongly
irreducible. We denote the set of all the strongly
irreducible operators by the symbol ``($SI$)''.

An operator $T$ in ${\mathcal L}({\mathcal H})$ is said to have
finite strongly irreducible decomposition, if there exist idempotents $P_1,
P_2, \ldots, P_n$  in $\{T\}^{\prime}$ such that

$1.\,\, P_iP_j={\delta}_{ij}P_i$  for $1{\leq}i,j{\leq}n<+{\infty},$
where ${\delta}_{ij}=\left\{
\begin{array}{cc}
0,&i{\not=}j\\
1,&i=j
\end{array}
\right. $;

$2. \,\, \sum\limits_{i=1}^{n}P_i=I_{\mathcal H},$ where
$I_{\mathcal H}$ denotes the identity operator on $\mathcal H$;

$3. \,\, T|_{P_i{\mathcal H}}$ is strongly irreducible for $i=1, 2, \ldots, n.$

Every Cowen-Douglas operator can be written as the direct sum of finitely
many strongly irreducible Cowen-Douglas operators (see \cite{JW}, chapter 3).
We call $P=(P_1, P_2, \ldots, P_n)$ a unit finite strongly
irreducible decomposition of $T$. 
Let $T$ has a finite $(SI) $ decomposition and $P=\{P_i \}^n_{i=1}$ and $Q=\{Q_i\}^{m}_{i=1}$ be two unit finite $(SI)$ decompositions of T. 
We say that $T$ has unique
strongly irreducible decomposition up to similarity if the following
conditions are satisfied:

1. $m=n;$  and

2. There exists an invertible operator $X$ in $\{T\}^{\prime}$ and a permutation ${\Pi}$ of the set $\{1,2,\cdots,n\}$ such that
$XQ_{{\Pi}(i)}X^{-1}=P_i$ for $1{\leq}i{\leq}n.$

\begin{lem}[Theorem 5.5.12, \cite{JW1}]\label{l1} Let $T$ be a Cowen-Douglas operator in $B_n(\Omega)$. The operator $T$ has a unique $(SI)$ decomposition. 

\end{lem}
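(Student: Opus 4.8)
The plan is to prove that every Cowen--Douglas operator $T\in B_n(\Omega)$ admits a unit finite strongly irreducible decomposition and that this decomposition is unique up to similarity. The existence half is already available: as recalled from \cite{JW}, chapter~3, every Cowen--Douglas operator decomposes as a finite direct sum of strongly irreducible operators, so there exist idempotents $P_1,\dots,P_n$ in $\{T\}^{\prime}$ with $P_iP_j=\delta_{ij}P_i$, $\sum_i P_i=I$, and each $T|_{P_i\mathcal{H}}$ strongly irreducible. The content of the lemma is therefore the \emph{uniqueness} statement, and for that I would invoke the structure theory of Cowen--Douglas operators developed in \cite{JW1}; the proof is essentially a citation of Theorem~5.5.12 there, but let me sketch the mechanism.

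First I would reduce the problem to a $K_0$-group computation. Given two unit $(SI)$ decompositions $P=\{P_i\}_{i=1}^n$ and $Q=\{Q_j\}_{j=1}^m$ of $T$, the idempotents $P_i$ and $Q_j$ all lie in the commutant algebra $\mathcal{A}(T):=\{T\}^{\prime}$. The key analytic input is that for a Cowen--Douglas operator $T$, the strongly irreducible summands are precisely the ``minimal'' pieces, and two strongly irreducible Cowen--Douglas operators $S_1$, $S_2$ are similar if and only if the corresponding idempotents in $\mathcal{A}(T)$ are (Murray--von Neumann) equivalent in $\mathcal{A}(T)$ via partial isometry-type arguments carried out with invertibles; this is where one uses that similarity of the restrictions is detected inside the commutant. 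Then the classes $[T|_{P_i\mathcal{H}}]$ generate the ordered $K_0$-group $K_0(\mathcal{A}(T))$, and the fact (from \cite{JW1}) that $K_0(\mathcal{A}(T))$ of such a commutant is a free abelian group whose positive cone has a unique basis of ``atoms'' forces $m=n$ and forces the multiset of summands $\{[T|_{Q_j\mathcal{H}}]\}$ to coincide with $\{[T|_{P_i\mathcal{H}}]\}$ after a permutation $\Pi$.

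Next I would upgrade this algebraic matching to an honest invertible intertwiner. Having $[P_i]=[Q_{\Pi(i)}]$ in $K_0(\mathcal{A}(T))$ gives, for each $i$, invertible operators $X_i\colon Q_{\Pi(i)}\mathcal{H}\to P_i\mathcal{H}$ implementing the similarity of the restrictions, and one must patch these into a single invertible $X\in\{T\}^{\prime}$ with $XQ_{\Pi(i)}X^{-1}=P_i$. The patching uses that $\sum P_i=\sum Q_j=I$ together with the Rosenblum/Sylvester-operator machinery: the off-diagonal pieces $P_i X Q_{\Pi(j)}$ for $i\neq j$ can be absorbed because $T|_{P_i\mathcal{H}}$ and $T|_{Q_{\Pi(j)}\mathcal{H}}$ are non-similar strongly irreducible Cowen--Douglas operators, so the relevant intertwining spaces behave well enough that the block operator assembled from the $X_i$ on the diagonal is invertible in $\{T\}^{\prime}$.

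The main obstacle is the step identifying the ordered $K_0$-group of the commutant $\{T\}^{\prime}$ and establishing that its positive cone has a unique set of minimal generators --- equivalently, that strongly irreducible Cowen--Douglas summands cannot be ``refined'' or ``recombined'' in two genuinely different ways up to similarity. This is precisely the deep content of the cited theorem in \cite{JW1}, relying on the approximation and lifting techniques for commutants of Cowen--Douglas operators; since that theorem is available to us, the proof here can be short, but a self-contained argument would have to reproduce that $K_0$-theoretic analysis. I would therefore present the proof as: existence from \cite{JW}, uniqueness by direct appeal to Theorem~5.5.12 of \cite{JW1}, with the remark that the uniqueness is equivalent to the uniqueness of the atomic basis of the positive cone of $K_0(\{T\}^{\prime})$.
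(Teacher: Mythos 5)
The paper offers no proof of this lemma at all: it is stated as a direct quotation of Theorem 5.5.12 of \cite{JW1}, and your proposal likewise ultimately defers to that citation, so the two approaches coincide. Your added sketch of the mechanism (uniqueness of the atomic generators of the positive cone of $K_0(\{T\}^{\prime})$, then patching the blockwise similarities into a single invertible in the commutant) is a fair description of how the cited result is actually proved, but it is supplementary to, not a replacement for, the appeal to \cite{JW1}.
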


\begin{lem}[Theorem 5.5.13, \cite{JW1}]\label{l2} Let $T=\bigoplus\limits_{i=1}^kT^{(n_i)}, \tilde{T}=\bigoplus\limits_{j=1}^s\tilde{T}^{(m_j)}$ be two Cowen-Douglas operators, where 
$T_i, \tilde{T}_j\in (SI) $ for  any $i, j$ and $T_i\not\sim_s T_{i^{\prime}}, \tilde{T}_j\not\sim_s \tilde{T}_{j^{\prime}}.$  Then $T\sim_s\tilde{T}$ if and only if 
$k=s$ and there exists a permutation ${\Pi}$ such that $T_i\sim_s \tilde{T}_{\Pi(i)}$ and $n_i=m_{\Pi(i)}, i=1,2,\cdots, k$. 

\end{lem}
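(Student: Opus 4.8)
The plan is to deduce the statement from the uniqueness of the strongly irreducible decomposition, Lemma~\ref{l1}. The ``if'' direction is immediate: if $k=s$ and $\Pi$ is a permutation of $\{1,\dots,k\}$ with $T_i\sim_s\tilde T_{\Pi(i)}$, say via invertible operators $X_i$, and $n_i=m_{\Pi(i)}$, then reindexing $\tilde T=\bigoplus_{i=1}^{k}\tilde T_{\Pi(i)}^{(n_i)}$ and setting $X=\bigoplus_{i=1}^{k}X_i^{(n_i)}$ produces an invertible operator with $XTX^{-1}=\tilde T$. So all the content is in the ``only if'' direction.

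For ``only if'', the first step is to turn the two coarse decompositions into genuine unit finite $(SI)$ decompositions. Writing each block out, decompose $\mathcal{H}=\bigoplus_{i=1}^{k}\bigoplus_{\ell=1}^{n_i}\mathcal{H}_{i,\ell}$ with $T|_{\mathcal{H}_{i,\ell}}\cong T_i$, and let $P=\{P_a\}_{a=1}^{N}$ (with $N=\sum_i n_i$) be the resulting family of idempotents in $\{T\}'$; this is a unit finite $(SI)$ decomposition of $T$ precisely because each $T_i\in(SI)$ by hypothesis. Do the same for $\tilde T$, obtaining idempotents $\{\tilde P_b\}_{b=1}^{M}$ in $\{\tilde T\}'$ with $M=\sum_j m_j$. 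Fix an invertible $X$ with $XTX^{-1}=\tilde T$ and put $\hat P_b:=X^{-1}\tilde P_b X$; since conjugation preserves idempotence and the commutant, and since $T|_{\hat P_b\mathcal{H}}\sim_s\tilde T|_{\tilde P_b\mathcal{H}}$ is again strongly irreducible (similarity preserves strong irreducibility), $Q=\{\hat P_b\}_{b=1}^{M}$ is a second unit finite $(SI)$ decomposition of the \emph{same} operator $T$. Now invoke Lemma~\ref{l1}: $N=M$, and there are an invertible $Y\in\{T\}'$ and a permutation $\sigma$ of $\{1,\dots,N\}$ with $Y\hat P_{\sigma(a)}Y^{-1}=P_a$ for all $a$.

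The next step reads off the matching of the irreducible summands. Setting $Z:=YX^{-1}$, the relations $Y\in\{T\}'$ and $XTX^{-1}=\tilde T$ give $Z\tilde T=TZ$, while $Y\hat P_{\sigma(a)}Y^{-1}=P_a$ gives $Z\tilde P_{\sigma(a)}=P_aZ$; hence $Z$ restricts to an invertible intertwiner from $\tilde P_{\sigma(a)}\mathcal{H}$ onto $P_a\mathcal{H}$, so $T|_{P_a\mathcal{H}}\sim_s\tilde T|_{\tilde P_{\sigma(a)}\mathcal{H}}$ for every $a$. Translating back to the original data, if $P_a$ carries the summand $T_{i(a)}$ and $\tilde P_{\sigma(a)}$ carries $\tilde T_{j(\sigma(a))}$, this says $T_{i(a)}\sim_s\tilde T_{j(\sigma(a))}$. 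Because the $T_i$ are pairwise non-similar, the fibre of $a\mapsto i(a)$ over $i$ has exactly $n_i$ elements; likewise the fibre of $b\mapsto j(b)$ over $j$ has $m_j$ elements; and since the $\tilde T_j$ are also pairwise non-similar, $\sigma$ carries the whole $i$-fibre into a single $j$-fibre, which defines $\Pi(i):=j$ with $T_i\sim_s\tilde T_{\Pi(i)}$ and $n_i\le m_{\Pi(i)}$. Running the same argument with $T$ and $\tilde T$ interchanged (using $X^{-1}$) yields the reverse inequalities, so $\Pi$ is a bijection, $k=s$, and $n_i=m_{\Pi(i)}$.

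The one genuinely hard ingredient is Lemma~\ref{l1} itself — the uniqueness, up to similarity, of the finite strongly irreducible decomposition of a Cowen-Douglas operator — which is the deep structural input (it ultimately rests on computing the ordered $K_0$-group of the commutant of such an operator). Granting Lemma~\ref{l1}, everything above is bookkeeping; the only points requiring care are that the hypotheses $T_i\in(SI)$ and $\tilde T_j\in(SI)$ are exactly what legitimize unfolding the multiplicity blocks into honest $(SI)$ decompositions, and that conjugating by $X^{-1}$ yields a decomposition of $T$ itself, so that Lemma~\ref{l1} applies to the two decompositions $P$ and $Q$ of one operator.
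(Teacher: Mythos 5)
The paper gives no proof of this lemma at all: it is imported verbatim as Theorem 5.5.13 of \cite{JW1}, just as Lemma \ref{l1} is imported as Theorem 5.5.12 of the same reference. So there is nothing in the text to compare against; what you have written is a self-contained derivation of the quoted classification theorem from the quoted uniqueness theorem, and it is correct. Your route is the natural one: unfold the multiplicity blocks of $T$ and of $\tilde T$ into unit finite $(SI)$ decompositions (legitimate exactly because each $T_i,\tilde T_j$ is assumed strongly irreducible), transport the decomposition of $\tilde T$ back to $T$ via $X^{-1}$ (conjugation preserves idempotence, membership in the commutant, and strong irreducibility of the restrictions), apply Lemma \ref{l1} to the two resulting decompositions of the single operator $T$, and then read off the matching. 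The intertwining computations for $Z=YX^{-1}$ are right, and the final bookkeeping is sound --- though the cleanest way to finish is to note that $\sigma$ maps each $i$-fibre injectively into the $\Pi(i)$-fibre, so $n_i\le m_{\Pi(i)}$ with $\Pi$ injective, and then $\sum_i n_i=N=M=\sum_j m_j$ forces $\Pi$ to be a bijection with $n_i=m_{\Pi(i)}$, sparing you the second pass with $T$ and $\tilde T$ interchanged. The only step you state without checking is that the $\hat P_b$ are mutually annihilating idempotents summing to $I$, but that is immediate from conjugation. All the depth is indeed concentrated in Lemma \ref{l1}, which you correctly treat as a black box.
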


By lemma \ref{l1}, and lemma \ref{l2}, we only need to consider when two strongly irreducible operators in $\mathcal{CFB}_n(\Omega)$ are similar equivalent. 
The  similarity classification for general case will follows by  lemma \ref{l2}.  Thus, in the following, we will assume $T\in \mathcal{CFB}_n(\Omega)$ is strongly irreducible operator.

\begin{lem} \label{SI}Let $T\in \mathcal{CFB}_n(\Omega)$. Then $T$ is strongly irreducible  if and only if  $T_{i,i+1}\neq 0$ for any $i=1,2\cdots, n-1$.

\end{lem}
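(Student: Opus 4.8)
The plan is to prove the two implications separately, using the structure of the upper-triangular representation together with the intertwining relations built into the definition of $\mathcal{CFB}_n(\Omega)$.

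First I would prove the contrapositive of the forward direction: if some $T_{i,i+1}=0$, then $T$ is not strongly irreducible. The key observation is that in Definition \ref{CFB1} every entry $T_{i,j}$ (for $i<j$) factors as $T_{i,j}=\phi_{i,j}T_{i,i+1}T_{i+1,i+2}\cdots T_{j-1,j}$; hence if $T_{i_0,i_0+1}=0$ for a fixed $i_0$, then $T_{i,j}=0$ for all $i\leq i_0<j$. This means that $T$ decomposes as a block-diagonal operator $T=\bigl(\begin{smallmatrix}A&0\\0&B\end{smallmatrix}\bigr)$ with respect to the orthogonal splitting $\mathcal{H}=(\mathcal{H}_1\oplus\cdots\oplus\mathcal{H}_{i_0})\oplus(\mathcal{H}_{i_0+1}\oplus\cdots\oplus\mathcal{H}_n)$, where $A\in B_{i_0}(\Omega)$ and $B\in B_{n-i_0}(\Omega)$. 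The projection onto the first summand along the second is then a non-trivial idempotent, but in fact it lies in $\{T\}'$ since $T$ is block diagonal; therefore $T$ is not strongly irreducible.

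For the converse, I would assume all $T_{i,i+1}\neq 0$ and show $T$ has no non-trivial idempotent in its commutant. Suppose $P\in\{T\}'$ is an idempotent. Because each $T_{i,i}\in B_1(\Omega)$ is strongly irreducible and the blocks $T_{i,i}, T_{i+1,i+1}$ satisfy the Property $(H)$ (and in particular, by Lemma \ref{JJKMLemma}, a non-zero intertwiner $T_{i,i+1}$ between $T_{i+1,i+1}$ and $T_{i,i}$ has dense range), I would argue inductively that $P$ must itself be upper-triangular in the block decomposition and that its diagonal blocks $P_{i,i}\in\{T_{i,i}\}'$ are idempotents in the commutant of an operator in $B_1(\Omega)$, hence $P_{i,i}=0$ or $P_{i,i}=I$. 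The heart of the matter is to show that the diagonal blocks are all equal: if $P_{i,i}=I$ but $P_{i+1,i+1}=0$ (or vice versa), one compares the $(i,i+1)$ entries in $PT=TP$ and uses $T_{i,i}P_{i,i+1}-P_{i,i+1}T_{i+1,i+1}= P_{i,i}T_{i,i+1}-T_{i,i+1}P_{i+1,i+1} = T_{i,i+1}\neq 0$, which exhibits $T_{i,i+1}$ as lying simultaneously in $\ker\tau_{T_{i,i},T_{i+1,i+1}}$ and $\operatorname{ran}\tau_{T_{i,i},T_{i+1,i+1}}$, contradicting Property $(H)$. Hence all diagonal blocks coincide, so $P-0$ or $P-I$ is nilpotent (strictly upper-triangular) while being idempotent, forcing $P=0$ or $P=I$.

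The main obstacle I anticipate is the inductive bookkeeping in the converse direction: one must first establish that an idempotent $P$ commuting with $T$ is forced into upper-triangular form (this uses that the only intertwiners from a ``later'' block to an ``earlier'' block in the relevant direction vanish, which in turn rests on the $B_1(\Omega)$ rigidity and the density statement of Lemma \ref{JJKMLemma}), and only then can one run the diagonal-block comparison. Getting the order of these reductions right, and correctly invoking Property $(H)$ exactly at the step where two adjacent diagonal blocks would disagree, is the delicate part; the rest is routine matrix manipulation of the relation $PT=TP$ entry by entry.
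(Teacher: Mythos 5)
Your proposal is correct and follows essentially the same route as the paper: the necessity direction is proved by exactly the same contrapositive argument (the factorization $T_{i,j}=\phi_{i,j}T_{i,i+1}\cdots T_{j-1,j}$ forces a block-diagonal splitting when some $T_{i_0,i_0+1}=0$), and for sufficiency the paper simply observes that Property $(H)$ gives $T_{i,i+1}\notin\mbox{ran}\,\tau_{T_{i,i},T_{i+1,i+1}}$ and then cites the proof of Proposition 2.22 of \cite{JJKM}, which is precisely the upper-triangularization-plus-adjacent-diagonal-blocks argument you sketch. The only caveat is that the delicate step you flag (forcing the idempotent $P$ into upper-triangular form) is carried out in the cited reference via the dense range of $T_{i,i+1}$ and the self-Property-$(H)$ of $B_1(\Omega)$ operators, which are exactly the ingredients you name.
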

\begin{proof} Let $T$ be a strongly irreducible operator in $\mathcal{CFB}_n(\Omega)$.  Suppose on contrary that $T_{k-1,k}=0$ for some $k$.  For i, j with $i+1\leq k\leq j$, we have  
\begin{eqnarray*}T_{i,j}&=&\phi_{i,j}T_{i,i+1}T_{i+1,i+2}\cdots T_{k-1,k}\cdots T_{j-1,j}\\
&=& 0.
\end{eqnarray*}
Thus $T$ has the following matrix form:
\begin{equation}
T=\left ( \begin{smallmatrix}T_{1,1} & T_{1,2}& \cdots&T_{1,k-1}&0&0& \cdots & 0\\
0&T_{2,2}&\cdots&T_{2,k-1}&0&0&\cdots&0\\
\vdots&\ddots&\ddots&\vdots&\vdots&\vdots&\vdots&\vdots\\
0&0&\cdots&T_{k-1,k-1}&0&\cdots&0&0\\
0&\cdots&0&0&T_{k,k}&T_{k,k+1}&\cdots&T_{k,n}\\
0&\cdots&\cdots&0&0&T_{k+1,k+1}&\cdots&T_{k+1,n}\\
0&0&\cdots&\cdots&\vdots&\ddots&\ddots&\vdots\\
0&0&\cdots&\cdots&0&0&\cdots&T_{n,n}\\
\end{smallmatrix}\right ).
\end{equation}
So $T$ is strongly reducible. This is a contradiction to the fact that $T$ is strongly irreducible. This finishes the proof of necessary part.   

For the sufficient part,  suppose that  each $T_{i,i+1}$ is non-zero operator. By the Definition  \ref{CFB1},  $T_{i,i}$ and $T_{i+1,i+1}$ satisfy the 
Property $(H)$.  Since $T_{i,i}T_{i,i+1}=T_{i,i+1}T_{i+1,i+1}$, it follows that $T_{i,i+1}\not\in ran\tau_{T_{i,i}T_{i+1,i+1}}.$ By a same proof of Proposition 2.22 in \cite{JJKM}, 
it is easy to see that $T$ is strongly irreducible. 
\end{proof}

\begin{lem}\label{J21}
Let $T=((T_{i,j}))_{n\times n}$, $\tilde{T}=((\tilde{T}_{i,j}))_{n\times n}$ be operators in $\mathcal{CFB}_n(\Omega)$. If $T_{i,i}=\tilde{T}_{i,i},$ $T_{i,i+1}=\tilde{T}_{i,i+1}$, then there exists 
a bounded operator $K$ such that $X=I+K$ is invertible and $XT=\tilde{T}X.$ 
\end{lem}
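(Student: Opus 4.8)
The plan is to build the intertwiner $X$ one ``diagonal'' at a time, peeling off the super-diagonals of $T$ and $\tilde T$ successively and invoking the Property $(H)$ at each stage to force the correction terms to vanish. Concretely, write $X = I + K$ where $K = \sum_{j\ge 2} K^{(j)}$ and $K^{(j)}$ has nonzero blocks only in positions $(i,i+j)$ (i.e. $K^{(j)}$ lives on the $j$-th super-diagonal); since $T,\tilde T$ are $n\times n$ upper-triangular with equal diagonal and first super-diagonal, the equation $XT = \tilde T X$ decomposes diagonal-by-diagonal into a hierarchy of operator equations. I would first record that the block equations coming from the main diagonal and the first super-diagonal are automatically satisfied (using $T_{i,i}=\tilde T_{i,i}$, $T_{i,i+1}=\tilde T_{i,i+1}$), and then set up the recursion: assuming $K^{(2)},\dots,K^{(j-1)}$ have been chosen so that all block-equations on super-diagonals $<j$ hold, the $(i,i+j)$ block equation takes the schematic form
\[
T_{i,i}\,K^{(j)}_{i,i+j} - K^{(j)}_{i,i+j}\,T_{i+j,i+j} = R^{(j)}_{i,i+j},
\]
where $R^{(j)}_{i,i+j}$ is a known operator built from $\tilde T$, $T$, and the previously constructed $K^{(\ell)}$ with $\ell < j$.

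The second step is to solve this Sylvester-type equation for $K^{(j)}_{i,i+j}$. The key structural input is that every off-diagonal block is, up to a factor commuting with the relevant diagonal entry, a product of the intertwiners $T_{i,i+1}\cdots T_{j-1,j}$; combined with $T_{k,k}T_{k,k+1}=T_{k,k+1}T_{k+1,k+1}$, this should show that $T_{i,j}$ and $\tilde T_{i,j}$ both lie in $\mathrm{ran}\,\tau_{T_{i,i},T_{j,j}}$ modulo the intertwining obstruction, so that the right-hand side $R^{(j)}_{i,i+j}$ lies in $\mathrm{ran}\,\tau_{T_{i,i},T_{i+j,i+j}}$ and in particular one can find $K^{(j)}_{i,i+j}$ with $\tau_{T_{i,i},T_{i+j,i+j}}(K^{(j)}_{i,i+j}) = R^{(j)}_{i,i+j}$. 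Here I would proceed through intermediate diagonal entries: $\tau_{T_{i,i},T_{i+j,i+j}}$ factors through the chain $T_{i,i},T_{i+1,i+1},\dots,T_{i+j,i+j}$, and each consecutive pair satisfies Property $(H)$, so a telescoping argument lets one transfer solvability down the chain. The freedom in $K^{(j)}_{i,i+j}$ (an element of $\ker\tau_{T_{i,i},T_{i+j,i+j}}$) must then be pinned down — or shown to be irrelevant — so that the construction is consistent across all blocks on the $j$-th diagonal; I expect one chooses the canonical solution and checks compatibility, or absorbs the kernel ambiguity into a later stage exactly as the elimination argument in Proposition~\ref{Jiang1} does (the $E_0, E_1,\dots$ there).

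The third step is bookkeeping: since the matrix is $n\times n$, the recursion terminates after finitely many stages ($j$ runs from $2$ to $n-1$), so $K = \sum_{j=2}^{n-1}K^{(j)}$ is a genuine bounded operator and $X = I+K$ is upper-triangular with identity on the diagonal, hence invertible (its inverse is again $I$ plus a finite strictly-upper-triangular operator). By construction $XT = \tilde T X$, which is the assertion.

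The main obstacle, I expect, is precisely the solvability of the Sylvester equation $\tau_{T_{i,i},T_{i+j,i+j}}(K) = R^{(j)}_{i,i+j}$: Property $(H)$ as stated is a uniqueness/rigidity statement ($\ker\tau\cap\mathrm{ran}\,\tau=\{0\}$) rather than an existence statement, so the real work is to argue that the accumulated right-hand side $R^{(j)}$ genuinely lands in the range of $\tau$. This is where the hypothesis $T_{i,j}=\phi_{i,j}T_{i,i+1}\cdots T_{j-1,j}$ with $\phi_{i,j}\in\{T_{i,i}\}'$ and the identity $\mathrm{diag}\{T\}\in\{T\}'$ must be used in an essential way — these are exactly what guarantee the off-diagonal blocks ``factor through'' the intertwiners so that the commutator structure closes up. A secondary subtlety is ensuring that the choices of $K^{(j)}_{i,i+j}$ for different $i$ on the same diagonal do not clash when they feed into the $R^{(j+1)}$ of the next stage; handling this cleanly may require organizing the induction over anti-diagonal position rather than naively block-by-block.
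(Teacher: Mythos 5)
Your overall framework --- strictly upper-triangular $K$, block equations handled super-diagonal by super-diagonal --- matches the paper's, but the way you orient the recursion contains a genuine error, and it is fatal rather than a fixable detail. You set the first super-diagonal of $K$ to zero and then try to determine $K_{i,i+j}$ from the $(i,i+j)$ block equation by solving the Sylvester equation $\tau_{T_{i,i},T_{i+j,i+j}}(K_{i,i+j})=R^{(j)}_{i,i+j}$. Already at $j=2$ this cannot work: with $K_{i,i+1}=0$ the right-hand side is $R^{(2)}_{i,i+2}=T_{i,i+2}-\tilde T_{i,i+2}=(\phi_{i,i+2}-\tilde\phi_{i,i+2})T_{i,i+1}T_{i+1,i+2}$, which lies in $\ker\tau_{T_{i,i},T_{i+2,i+2}}$ (the $\phi$'s commute with $T_{i,i}$ and the $T_{k,k+1}$ intertwine consecutive diagonal entries), and an element of the kernel is in general \emph{not} in the range. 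Concretely, take $n=3$, all $T_{i,i}$ equal to the backward shift $S^*$, $T_{1,2}=T_{2,3}=\tilde T_{1,2}=\tilde T_{2,3}=I$, $T_{1,3}=0$, $\tilde T_{1,3}=I$; your equation becomes $S^*K_{1,3}-K_{1,3}S^*=-I$, which has no bounded solution (the identity is never a commutator), yet the lemma holds for this pair, e.g.\ with $K_{1,2}=I$ and every other block of $K$ equal to zero. The nonzero intertwiners $K_{i,i+1}\in\ker\tau_{T_{i,i},T_{i+1,i+1}}$ that you discarded are precisely the degrees of freedom that absorb the kernel part of $\tilde T_{i,i+2}-T_{i,i+2}$; the point you flag as ``the main obstacle'' --- whether $R^{(j)}$ lands in the range of $\tau$ --- is exactly where the argument breaks, and the answer is that it does not.

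The paper runs the recursion the other way around. One posits that every block $K_{i,i+j}$ \emph{intertwines} $T_{i,i}$ and $T_{i+j,i+j}$, so that its Sylvester contribution to the $(i,i+j)$ equation vanishes; that equation then involves only blocks of $K$ on the $(j-1)$-st super-diagonal and reads, for $j=2$, $K_{i,i+1}T_{i+1,i+2}=T_{i,i+1}K_{i+1,i+2}+\tilde T_{i,i+2}-T_{i,i+2}$. One chooses $K_{n-j+1,n}$ freely among intertwiners and solves backwards along the diagonal for the $K_{i,i+j-1}$, the factored form $T_{i,j}=\phi_{i,j}T_{i,i+1}\cdots T_{j-1,j}$ exhibiting solutions of the shape (commutant element) times (product of intertwiners), which are automatically intertwiners as required at the previous stage. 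No solvability of a Sylvester equation is ever invoked, and Property $(H)$ plays no role in this lemma (it enters in Lemma \ref{J3} and in the necessity direction of Theorem \ref{Main1}). So the correct repair is not to ``transfer solvability down the chain'' of consecutive pairs, but to invert the roles of the equations: the $(i,i+j)$ block equation determines the $(j-1)$-st super-diagonal of $K$, while the $j$-th super-diagonal is constrained only to consist of intertwiners.
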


\begin{proof} To find $K$, we need to solve the equation 
\begin{equation}\label{mle1}
(I+K)T=\tilde{T}(I+K)
\end{equation} Set $X:=I+K$, where 
$$K=\left ( \begin{smallmatrix}0 & K_{1,2}& K_{1,3}& \cdots & K_{1,n}\\
0&0&K_{2,3}&\cdots&K_{2,n}\\
\vdots&\ddots&\ddots&\ddots&\vdots\\
0&\cdots&0&0&K_{n-1,n}\\
0&\cdots&\cdots&0&0
\end{smallmatrix}\right ).$$
From equation (\ref{mle1}), we have
\begin{eqnarray}\label{4.1}
&&\left ( \begin{smallmatrix}1 & K_{1,2}& K_{1,3}& \cdots & K_{1,n}\\
0&1&K_{2,3}&\cdots&K_{2,n}\\
\vdots&\ddots&\ddots&\ddots&\vdots\\
0&\cdots&0&1&K_{n-1,n}\\
0&\cdots&\cdots&0&1
\end{smallmatrix}\right )\left ( \begin{smallmatrix}T_{1,1} & T_{1,2}& {T}_{1,3}& \cdots & {T}_{1,n}\\
0&T_{2,2}&T_{2,3}&\cdots& {T}_{2,n}\\
\vdots&\ddots&\ddots&\ddots&\vdots\\
0&\cdots&0&T_{n-1,n-1}&T_{n-1,n}\\
0&\cdots&\cdots&0&T_{n,n}
\end{smallmatrix}\right ) \nonumber\\
&=&\left ( \begin{smallmatrix}T_{1,1} & T_{1,2}& \tilde{T}_{1,3}& \cdots & \tilde{T}_{1,n}\\
0&T_{2,2}&T_{2,3}&\cdots&\tilde{T}_{2,n}\\
\vdots&\ddots&\ddots&\ddots&\vdots\\
0&\cdots&0&T_{n-1,n-1}&T_{n-1,n}\\
0&\cdots&\cdots&0&T_{n,n}
\end{smallmatrix}\right )\left ( \begin{smallmatrix}1 & K_{1,2}& K_{1,3}& \cdots & K_{1,n}\\
0&1&K_{2,3}&\cdots&K_{2,n}\\
\vdots&\ddots&\ddots&\ddots&\vdots\\
0&\cdots&0&1&K_{n-1,n}\\
0&\cdots&\cdots&0&1
\end{smallmatrix}\right ).
\end{eqnarray}
To find $K_{i,j}$, we follow following steps.

\noindent {\bf Step 1}: 
For $1\leq i\leq n-1$, by equating the $(i,i+1)$th entry of equation (\ref{4.1}), we have $T_{i,i+1}+ K_{i,i+1}T_{i+1,i+1}= T_{i,i} K_{i,i+1}+ T_{i,i+1}$, that is, $ K_{i,i+1}T_{i+1,i+1}= T_{i,i} K_{i,i+1}$.
For $1\leq i \leq n-2$, by comparing $(i,i+2)$th entry of equation (\ref{4.1}), we have
\begin{eqnarray}\label{l4e1}T_{i,i+2}+K_{i,i+1}T_{i+1,i+2}+K_{i,i+2}T_{i+2,i+2}=T_{i,i}K_{i,i+2}+T_{i,i+1}K_{i+1,i+2}+\tilde{T}_{i,i+2}.\end{eqnarray}
If $T_{i,i}K_{i,i+2}= K_{i,i+2}T_{i+2,i+2}, \;1\leq i\leq n-2$, then from equation  (\ref{l4e1}) we get
\begin{eqnarray}\label{l4e2}T_{i,i+2}+K_{i,i+1}T_{i+1,i+2}=T_{i,i+1}K_{i+1,i+2}+\tilde{T}_{i,i+2}.\end{eqnarray}
Choose $K_{n-1,n}$ such that $ K_{n-1,n}T_{n,n}= T_{n-1,n-1} K_{n-1,n}$. For $1\leq i\leq n-2$, from equation (\ref{l4e2}), we get $K_{i,i+1}$ which satisfies $ K_{i,i+1}T_{i+1,i+1}= T_{i,i} K_{i,i+1}$.

\noindent {\bf Step 2}:
We compare $(i,i+3)$th entry of equation (\ref{4.1}), we get
 \begin{eqnarray}\label{l4e3}
 &&T_{i,i+3}+ K_{i,i+1}T_{i+1,i+3}+K_{i,i+2}T_{i+2,i+3}+K_{i,i+3}T_{i+3,i+3}\nonumber\\&&=T_{i,i}K_{i,i+3}+T_{i,i+1}K_{i+1,i+3}+\tilde{T}_{i,i+2}K_{i+2,i+3}+\tilde{T}_{i,i+3}.
 \end{eqnarray}
 If $T_{i,i}K_{i,i+3}=K_{i,i+3}T_{i+3,i+3}, 1\leq i\leq n-3$, then from equation (\ref{l4e3}) we have  
 \begin{eqnarray}\label{l4e4}
 &&T_{i,i+3}+ K_{i,i+1}T_{i+1,i+3}+K_{i,i+2}T_{i+2,i+3}\nonumber\\&&=T_{i,i+1}K_{i+1,i+3}+\tilde{T}_{i,i+2}K_{i+2,i+3}+\tilde{T}_{i,i+3}.
 \end{eqnarray}
Choose $K_{n-2,n}$ such that $K_{n-2,n} T_{n,n}=T_{n-2,n-2}K_{n-2,n}$. For $1\leq i\leq n-3$, from equation (\ref{l4e4}), we get $K_{i,i+2}$ which satisfies $T_{i,i}K_{i,i+2}= K_{i,i+2}T_{i+2,i+2}$.

\noindent {\bf Step 3}: 
By following previous steps, suppose we have solved $K_{i,i+l}$ for $ 1\leq i\leq n-l, 1\leq l\leq j-2$.


By comparing the $(i,i+j)$th entry of the equation (\ref{4.1}), we have 
\begin{eqnarray}\label{l4e5}
&&T_{i,i+j}+K_{i,i+1}T_{i+1,i+j}+K_{i,i+2}T_{i+2,i+j}+\cdots+K_{i,i+j}T_{i+j,i+j}\nonumber\\
&=&T_{i,i}K_{i,i+j}+T_{i,i+1}K_{i+1,i+j}+\tilde{T}_{i,i+2} K_{i+2,i+j}+\cdots+\tilde{T}_{i,i+j-1}K_{i+j-1,i+j}+\tilde{T}_{i,i+j}.
\end{eqnarray}
If $T_{i,i+j}K_{i,i+j}=K_{i,i+j}T_{i+j,i+j}, 1\leq i\leq n-j$, then from equation (\ref{l4e5}) we get

\begin{eqnarray}\label{l4e6}
&&T_{i,i+j}+K_{i,i+1}T_{i+1,i+j}+K_{i,i+2}T_{i+2,i+j}+\cdots+K_{i,i+j-1}T_{i+j-1,i+j}\nonumber\\
&=&T_{i,i+1}K_{i+1,i+j}+\tilde{T}_{i,i+2} K_{i+2,i+j}+\cdots+\tilde{T}_{i,i+j-1}K_{i+j-1,i+j}+\tilde{T}_{i,i+j}.
\end{eqnarray}

Choose $K_{n-j+1,n}$ such that $K_{n-j+1,n} T_{n,n}=T_{n-j+1,n-j+1}K_{n-j+1,n}$. For $1\leq i\leq n-j$, from equation (\ref{l4e6}), we get $K_{i,i+j-1}$ which satisfies $T_{i,i}K_{i,i+j-1}= K_{i,i+j-1}T_{i+j-1,i+j-1}$.

\end{proof}

We recall a result from \cite{JJKM} which gives a description of an invertible operator intertwining any two operators in $\mathcal{FB}_n(\Omega)$.

 \begin{prop}\label{pjjkm}
 If $X$ is an invertible operator intertwining two operators $T$ and $\tilde{T}$ from $\mathcal{FB}_n(\Omega)$, then $X$ and $X^{-1}$ are upper triangular.
 \end{prop}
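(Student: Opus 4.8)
The plan is to recall the structure theory of $\mathcal{FB}_n(\Omega)$ developed in \cite{JJKM} and exploit the fact that each diagonal block $T_{i,i}\in B_1(\Omega)$ is irreducible with a well-understood commutant and intertwiner structure. First I would fix holomorphic frames: let $\{\gamma_1,\dots,\gamma_n\}$ be the frame of $E_T$ coming from the decomposition $\mathcal H = \mathcal H_1\oplus\cdots\oplus\mathcal H_n$ and $t_i$ the frame of $E_{T_{i,i}}$, related as in (1.2), and similarly $\{\tilde\gamma_i\}$, $\{\tilde t_i\}$ for $\tilde T$. Since $X$ is invertible with $XT=\tilde T X$, $X$ carries $\ker(T-w)$ onto $\ker(\tilde T-w)$ for each $w\in\Omega$, hence induces an invertible holomorphic bundle map $E_T\to E_{\tilde T}$; in particular $X$ is determined by a holomorphic, pointwise-invertible matrix-valued function acting between the frames.

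The key step is to show that, with respect to the frames adapted to the flag $\mathcal H_1\subset \mathcal H_1\oplus\mathcal H_2\subset\cdots$, this bundle map is upper triangular, i.e. $X\mathcal H_1\oplus\cdots\oplus\mathcal H_i \subseteq \tilde{\mathcal H}_1\oplus\cdots\oplus\tilde{\mathcal H}_i$. I would argue by induction on $i$. For $i=1$: composing $X$ with the projection $\tilde P_j$ ($j\ge 2$) onto $\tilde{\mathcal H}_j$ and restricting to $\mathcal H_1$ gives an operator $\tilde P_j X|_{\mathcal H_1}$ intertwining $T_{1,1}\in B_1(\Omega)$ with the compression $\tilde T_{j,j}$; the structure of $\mathcal{FB}_n(\Omega)$ (the chain of intertwining relations $T_{i,i}T_{i,i+1}=T_{i,i+1}T_{i+1,i+1}$ together with Lemma \ref{JJKMLemma}, which says a nonzero intertwiner of two $B_1(\Omega)$ operators has dense range) forces such a map either to vanish or to be compatible with the flag — here one uses that the only way $X$ can be invertible overall is if these "below-diagonal" pieces vanish. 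Propagating this up the flag (using that once the lower-left corner is killed, the induced map on the quotient bundles is again an invertible bundle map between $\mathcal{FB}_{n-1}(\Omega)$-type objects) yields upper-triangularity of $X$. Then upper-triangularity of $X^{-1}$ is automatic: the inverse of an invertible upper-triangular operator with respect to a fixed (finite) flag decomposition is again upper triangular, since the diagonal blocks $X_{i,i}$ must themselves be invertible (as $X_{i,i}$ intertwines $T_{i,i}$ and $\tilde T_{i,i}$ and the whole operator is invertible) and one solves for $(X^{-1})_{i,j}$ recursively exactly as in the proof of Lemma \ref{J21}.

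The main obstacle I anticipate is the inductive step showing the strictly-lower-triangular blocks of $X$ vanish: a priori $\tilde P_j X P_i$ for $i>j$ is just some intertwiner of compressions of $T$ and $\tilde T$, and these compressions are themselves upper-triangular operators in an $\mathcal{FB}$-type class, not single $B_1(\Omega)$ operators, so Lemma \ref{JJKMLemma} does not apply directly. The right way to handle this, following \cite{JJKM}, is to induct downward on $i-j$: look first at the "most lower-left" corner $\tilde P_n X P_1$, which \emph{does} intertwine $T_{1,1}$ and $\tilde T_{n,n}$ (both in $B_1(\Omega)$), show it is zero, and then peel off successive sub-diagonals, at each stage obtaining a genuine $B_1(\Omega)$-to-$B_1(\Omega)$ intertwiner at the extreme corner. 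Invertibility of $X$ enters because if some lower-triangular block survived, the image of the corresponding flag-piece would be too large and could not fit inside a proper subbundle — equivalently, the leading (diagonal) part of $X$ would fail to be an invertible bundle map, contradicting $X$ being invertible.
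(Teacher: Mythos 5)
The paper offers no proof of Proposition \ref{pjjkm}: it is quoted from \cite{JJKM} ("We recall a result from \cite{JJKM}\dots"), so there is no in-text argument to compare yours against, and your proposal must stand on its own. Its bookkeeping is right: writing $X=((X_{i,j}))$ with $X_{i,j}=\tilde P_iXP_j$, the $(j,1)$ entry of $XT=\tilde TX$ reads $X_{j,1}T_{1,1}=\sum_{k\ge j}\tilde T_{j,k}X_{k,1}$, so only the extreme corner $X_{n,1}$ satisfies a clean intertwining relation at the outset, and your plan to peel off sub-diagonals starting from $(n,1)$ is the correct order of operations (and correctly repairs the false claim in your second paragraph that $\tilde P_jX|_{\mathcal H_1}$ intertwines $T_{1,1}$ with $\tilde T_{j,j}$ for every $j$).

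The genuine gap is that you never prove the corner block vanishes. Your justification --- that a surviving lower-triangular block would make ``the image of the flag-piece too large'' or the diagonal part of $X$ fail to be invertible --- is a heuristic, not an argument: nonzero intertwiners between two operators in $B_1(\Omega)$ exist in abundance (the entries $T_{i,i+1}$ of any member of $\mathcal{FB}_n(\Omega)$ are exactly such intertwiners), a block operator with nonzero strictly-lower-triangular entries can perfectly well be invertible, and Lemma \ref{JJKMLemma} if anything cuts against you, since it says a nonzero $X_{n,1}$ has \emph{dense} range. The real mechanism must combine several entries of $XT=\tilde TX$ with the relations $T_{i,i}T_{i,i+1}=T_{i,i+1}T_{i+1,i+1}$: already for $n=2$ one finds that $X_{2,1}T_{1,2}$ lies in $\ker\tau_{\tilde T_{2,2},T_{2,2}}\cap\mbox{ran}\,\tau_{\tilde T_{2,2},T_{2,2}}$, and one then needs a vanishing argument for such elements. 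That this is not free is the entire point of this paper's Property $(H)$, which is precisely the hypothesis $\ker\tau\cap\mbox{ran}\,\tau=\{0\}$; supplying the vanishing for general $\mathcal{FB}_n(\Omega)$ is the substantive content of the cited result in \cite{JJKM}, and it is exactly the part your proposal replaces with hand-waving. A secondary flaw: upper-triangularity of $X^{-1}$ is not ``automatic'' from that of $X$ --- an invertible block-upper-triangular operator need not have invertible diagonal blocks, and ``$X_{i,i}$ intertwines $T_{i,i}$ and $\tilde T_{i,i}$ and $X$ is invertible'' does not yield invertibility of $X_{i,i}$ without circularity. The clean fix is to observe that $X^{-1}$ intertwines $\tilde T$ and $T$, both in $\mathcal{FB}_n(\Omega)$, and to run the same triangularity argument for $X^{-1}$ directly.
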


\begin{lem}\label{J3}
Let  $T$ and $\tilde{T}$ be operators  in $\mathcal{CFB}_n(\Omega)$. Let  $X$ be a bounded linear operator of the form
$$X=\left ( \begin{smallmatrix}X_{1,1} & X_{1,2}& X_{1,3}& \cdots & X_{1,n}\\
0&X_{2,2}&X_{2,3}&\cdots&X_{2,n}\\
\vdots&\ddots&\ddots&\ddots&\vdots\\
0&\cdots&0&X_{n-1,n-1}&X_{n-1,n}\\
0&\cdots&\cdots&0&X_{n,n}
\end{smallmatrix}\right ).$$
If $X\tilde{T}=TX$ and X is invertible, then 
\begin{eqnarray*}
&&\left ( \begin{smallmatrix}X_{1,1} & 0& 0& \cdots & 0\\
0&X_{2,2}&0&\cdots&0\\
\vdots&\ddots&\ddots&\ddots&\vdots\\
0&\cdots&0&X_{n-1,n-1}&0\\
0&\cdots&\cdots&0&X_{n,n}
\end{smallmatrix}\right )\left ( \begin{smallmatrix}\tilde{T}_{1,1} & \tilde{T}_{1,2}&0& \cdots & 0\\
0&\tilde{T}_{2,2}&\tilde{T}_{2,3}&\cdots&0\\
\vdots&\ddots&\ddots&\ddots&\vdots\\
0&\cdots&0&\tilde{T}_{n-1,n-1}&T_{n-1,n}\\
0&\cdots&\cdots&0&\tilde{T}_{n,n}
\end{smallmatrix}\right )\\
&=&\left ( \begin{smallmatrix}T_{1,1} & T_{1,2}& 0& \cdots & 0\\
0&T_{2,2}&T_{2,3}&\cdots&0\\
\vdots&\ddots&\ddots&\ddots&\vdots\\
0&\cdots&0&T_{n-1,n-1}&T_{n-1,n}\\
0&\cdots&\cdots&0&T_{n,n}
\end{smallmatrix}\right )\left ( \begin{smallmatrix}X_{1,1} & 0& 0& \cdots & 0\\
0&X_{2,2}&0&\cdots&0\\
\vdots&\ddots&\ddots&\ddots&\vdots\\
0&\cdots&0&X_{n-1,n-1}&0\\
0&\cdots&\cdots&0&X_{n,n}
\end{smallmatrix}\right )
\end{eqnarray*}


\end{lem}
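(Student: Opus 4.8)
The plan is to show that the off-diagonal part of $X$ is irrelevant to the diagonal intertwining relations by peeling off, entry by diagonal by diagonal, the consequences of $X\tilde T = TX$. Write $X = D + N$ where $D = \mathrm{diag}\{X_{1,1},\dots,X_{n,n}\}$ and $N$ is strictly upper triangular, and similarly split $T = D_T + N_T$, $\tilde T = D_{\tilde T} + N_{\tilde T}$ into their block-diagonal and strictly-upper-triangular parts (here $D_{\tilde T} = \mathrm{diag}\{\tilde T_{1,1},\dots\}$, etc.). Since $T,\tilde T\in\mathcal{CFB}_n(\Omega)$, Definition \ref{CFB1}(1) guarantees that $D_T\in\{T\}^{\prime}$ and $D_{\tilde T}\in\{\tilde T\}^{\prime}$, and moreover every entry factors as $T_{i,j}=\phi_{i,j}T_{i,i+1}\cdots T_{j-1,j}$ with $\phi_{i,j}\in\{T_{i,i}\}^{\prime}$; the conclusion we want is exactly the statement that $D$ intertwines the two $2$-band truncations $D_{\tilde T}+N_{\tilde T}^{(1)}$ and $D_T + N_T^{(1)}$, where $N^{(1)}$ denotes the first superdiagonal.

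First I would compare the $(i,i)$-entries of $X\tilde T = TX$: this gives $X_{i,i}\tilde T_{i,i} = T_{i,i}X_{i,i}$ immediately, so each diagonal block already intertwines, $X_{i,i}\in\ker\tau_{T_{i,i},\tilde T_{i,i}}$. Next I would compare the $(i,i+1)$-entries: this reads
\begin{eqnarray*}
X_{i,i}\tilde T_{i,i+1} + X_{i,i+1}\tilde T_{i+1,i+1} = T_{i,i}X_{i,i+1} + T_{i,i+1}X_{i+1,i+1}.
\end{eqnarray*}
Rearranging, $X_{i,i}\tilde T_{i,i+1} - T_{i,i+1}X_{i+1,i+1} = T_{i,i}X_{i,i+1} - X_{i,i+1}\tilde T_{i+1,i+1} = \tau_{T_{i,i},\tilde T_{i+1,i+1}}(X_{i,i+1})$. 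So the obstruction to the desired identity $X_{i,i}\tilde T_{i,i+1} = T_{i,i+1}X_{i+1,i+1}$ is the element $\tau_{T_{i,i},\tilde T_{i+1,i+1}}(X_{i,i+1})$, which lies in $\mathrm{ran}\,\tau_{T_{i,i},\tilde T_{i+1,i+1}}$. The key point is to show this element also lies in $\ker\tau_{T_{i,i},\tilde T_{i+1,i+1}}$; then the Property $(H)$ hypothesis on the pair $(T_{i,i},\tilde T_{i+1,i+1})$ — which holds because $T_{i,i}\sim_s\tilde T_{i,i}$ are both in $B_1(\Omega)$, so by Remark after Definition \ref{ph} and Lemma \ref{SPH} the pair $(T_{i,i},\tilde T_{i+1,i+1})$ inherits Property $(H)$ from $(T_{i,i},T_{i+1,i+1})$ — forces it to be $0$, giving exactly what we want.

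To see that $\tau_{T_{i,i},\tilde T_{i+1,i+1}}(X_{i,i+1})$ is annihilated by $\tau_{T_{i,i},\tilde T_{i+1,i+1}}$, I would apply $T_{i,i}\cdot - \cdot\tilde T_{i+1,i+1}$ to the displayed rearrangement and use that $X_{i,i}$ intertwines $(T_{i,i},\tilde T_{i,i})$, $X_{i+1,i+1}$ intertwines $(T_{i+1,i+1},\tilde T_{i+1,i+1})$, and the intertwining $\tilde T_{i,i}\tilde T_{i,i+1} = \tilde T_{i,i+1}\tilde T_{i+1,i+1}$ and $T_{i,i}T_{i,i+1} = T_{i,i+1}T_{i+1,i+1}$ built into $\mathcal{FB}_n$; the two sides collapse to the same operator. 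Concretely, $\tau_{T_{i,i},\tilde T_{i+1,i+1}}\big(X_{i,i}\tilde T_{i,i+1} - T_{i,i+1}X_{i+1,i+1}\big) = T_{i,i}X_{i,i}\tilde T_{i,i+1} - X_{i,i}\tilde T_{i,i+1}\tilde T_{i+1,i+1} - T_{i,i}T_{i,i+1}X_{i+1,i+1} + T_{i,i+1}X_{i+1,i+1}\tilde T_{i+1,i+1}$, and replacing $T_{i,i}X_{i,i} = X_{i,i}\tilde T_{i,i}$, $\tilde T_{i,i+1}\tilde T_{i+1,i+1} = \tilde T_{i,i}\tilde T_{i,i+1}$, $T_{i,i}T_{i,i+1} = T_{i,i+1}T_{i+1,i+1}$, and $X_{i+1,i+1}\tilde T_{i+1,i+1} = T_{i+1,i+1}X_{i+1,i+1}$ shows all four terms cancel in pairs, so the result is $0$. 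The main obstacle, and the place to be careful, is this cancellation bookkeeping together with correctly invoking Lemma \ref{SPH} to transfer Property $(H)$ from the pair $(T_{i,i},T_{i+1,i+1})$ to the "mixed" pair $(T_{i,i},\tilde T_{i+1,i+1})$; once that is in place the conclusion $X_{i,i}\tilde T_{i,i+1} = T_{i,i+1}X_{i+1,i+1}$ is immediate, and assembling these over all $i$ is exactly the matrix identity in the statement. (Note that, by Proposition \ref{pjjkm}, the hypothesis that $X$ be upper triangular is automatic once $X$ is invertible and intertwines two operators in $\mathcal{FB}_n(\Omega)$, so the stated form of $X$ is no loss of generality.)
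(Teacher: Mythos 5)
Your proof is correct and rests on the same key mechanism as the paper's: exhibit the obstruction $X_{i,i}\tilde T_{i,i+1}-T_{i,i+1}X_{i+1,i+1}$ as an element of $\ker\tau\cap\mathrm{ran}\,\tau$ for a suitable Rosenblum operator and kill it with Property $(H)$; your cancellation computation checks out. The one genuine difference is that you work with the \emph{mixed} pair $(T_{i,i},\tilde T_{i+1,i+1})$, for which Property $(H)$ is not part of Definition \ref{CFB1} and must be imported via Lemma \ref{SPH}, whereas the paper first right-multiplies $X$ by $\mathrm{diag}\{X_{1,1}^{-1},\dots,X_{n,n}^{-1}\}$ so that the obstruction becomes $T_{i,i+1}-X_{i,i}\tilde T_{i,i+1}X_{i+1,i+1}^{-1}\in\ker\tau_{T_{i,i},T_{i+1,i+1}}\cap\mathrm{ran}\,\tau_{T_{i,i},T_{i+1,i+1}}$ and the unmixed hypothesis applies directly. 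Your route is fine, but two points should be tightened: (i) the transfer by Lemma \ref{SPH} needs $\tilde T_{i+1,i+1}\sim_s T_{i+1,i+1}$ (implemented by $X_{i+1,i+1}$), not the similarity $T_{i,i}\sim_s\tilde T_{i,i}$ you cite — either similarity can be used, but then you must start from the corresponding pair in the definition; and (ii) both your argument and the paper's require the diagonal blocks $X_{i,i}$ to be invertible, which is not automatic for an invertible upper-triangular operator matrix but does follow from Proposition \ref{pjjkm} (since $X^{-1}$ is also upper triangular, $(X^{-1})_{i,i}$ is a two-sided inverse of $X_{i,i}$); you cite that proposition only for the shape of $X$, so make this use explicit.
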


\begin{proof}
By equating the entries of $X\tilde{T}=TX$, we get 

$$X_{i,i}\tilde{T}_{i,i}=T_{i,i}X_{i,i}, ~~ ~~1\leq i\leq n.$$
 Set $Y:=X\,\mbox{diag}\,\{X^{-1}_{1,1},X^{-1}_{2,2},\cdots,X^{-1}_{n,n}\}$, and it is  easy to see that 
\begin{eqnarray*}
Y=\left ( \begin{smallmatrix}I &X_{1,2}X^{-1}_{2,2}& X_{1,3}X^{-1}_{3,3}& \cdots & X_{1,n}X^{-1}_{n,n}\\
0&I&X_{2,3}X^{-1}_{3,3}&\cdots&X_{2,n}X^{-1}_{n,n}\\
\vdots&\ddots&\ddots&\ddots&\vdots\\
0&\cdots&0&I&X_{n-1,n}X^{-1}_{n,n}\\
0&\cdots&\cdots&0&I
\end{smallmatrix}\right ).
\end{eqnarray*}

From $T=X\tilde{T}X^{-1}$, we get $TY= Y\,\mbox{diag}\,\{X_{1,1}\cdots X_{n,n}\}\tilde{T}\,\mbox{diag}\,\{ X_{1,1}\cdots X_{n,n}\}^{-1}$ which is equivalent to   
 
\begin{eqnarray}\label{ml2e1}
&&\left ( \begin{smallmatrix}T_{1,1} & T_{1,2}& {T}_{1,3}& \cdots & {T}_{1,n}\\
0&T_{2,2}&T_{2,3}&\cdots&{T}_{2,n}\\
\vdots&\ddots&\ddots&\ddots&\vdots\\
0&\cdots&0&T_{n-1,n-1}&T_{n-1,n}\\
0&\cdots&\cdots&0&T_{n,n}
\end{smallmatrix}\right )\left ( \begin{smallmatrix}I &X_{1,2}X^{-1}_{2,2}& X_{1,3}X^{-1}_{3,3}& \cdots & X_{1,n}X^{-1}_{n,n}\\
0&I&X_{2,3}X^{-1}_{3,3}&\cdots&X_{2,n}X^{-1}_{n,n}\\
\vdots&\ddots&\ddots&\ddots&\vdots\\
0&\cdots&0&I&X_{n-1,n}X^{-1}_{n,n}\\
0&\cdots&\cdots&0&I
\end{smallmatrix}\right )\nonumber\\
&=&\left ( \begin{smallmatrix}I &X_{1,2}X^{-1}_{2,2}& X_{1,3}X^{-1}_{3,3}& \cdots & X_{1,n}X^{-1}_{n,n}\\
0&I&X_{2,3}X^{-1}_{3,3}&\cdots&X_{2,n}X^{-1}_{n,n}\\
\vdots&\ddots&\ddots&\ddots&\vdots\\
0&\cdots&0&I&X_{n-1,n}X^{-1}_{n,n}\\
0&\cdots&\cdots&0&I
\end{smallmatrix}\right )
\left ( \begin{smallmatrix}T_{1,1}&X_{1,1}\tilde{T}_{1,2}X^{-1}_{2,2}&\cdots& \cdots&X_{1,1}\tilde{T}_{1,n}X^{-1}_{n,n}\\
0&T_{2,2}&X_{2,2}\tilde{T}_{2,3}X^{-1}_{3,3}&\cdots&X_{2,2}\tilde{T}_{2,n}X^{-1}_{n,n} \\
\vdots&\ddots&\ddots&\ddots&\vdots\\
0&0&\cdots&0&T_{n,n}
\end{smallmatrix}\right ).
\end{eqnarray}

For $1\leq i\leq n-1$, from equation (\ref{ml2e1}), we get
 
\begin{eqnarray*}
\left(\begin{matrix}T_{i,i}&T_{i,i+1}\\
0&T_{i+1,i+1}\\ \end{matrix}\right)\left(\begin{matrix}I&X_{i,i+1}X^{-1}_{i+1,i+1}\\
0&I\\ \end{matrix}\right)
= \left(\begin{matrix}I&X_{i,i+1}X^{-1}_{i+1,i+1}\\
0&I\\ \end{matrix}\right)\left(\begin{matrix}T_{i,i}&X_{i,i}\tilde{T}_{i,i+1}X^{-1}_{i+1,i+1}\\
0&T_{i+1,i+1}\\ \end{matrix}\right)\\
\end{eqnarray*}

which is equivalent to
\begin{eqnarray*}\label{4.5}
T_{i,i+1}-X_{i,i}\tilde{T}_{i,i+1}X^{-1}_{i+1,i+1}
=X_{i,i+1}X^{-1}_{i+1,i+1}T_{i+1,i+1}-T_{i,i}X_{i,i+1}X^{-1}_{i+1,i+1}.
\end{eqnarray*}
Consider  
\begin{eqnarray*}
T_{i,i}(T_{i,i+1}-X_{i,i}\tilde{T}_{i,i+1}X^{-1}_{i+1,i+1})
&=&T_{i,i+1}T_{i+1,i+1}-X_{i,i}\tilde{T}_{i,i}\tilde{T}_{i,i+1}X^{-1}_{i+1,i+1}\\
&=&T_{i,i+1}T_{i+1,i+1}-X_{i,i}\tilde{T}_{i,i+1}\tilde{T}_{i+1,i+1}X^{-1}_{i+1,i+1}\\
&=&T_{i,i+1}T_{i+1,i+1}-X_{i,i}\tilde{T}_{i,i+1}X^{-1}_{i+1,i+1}T_{i+1,i+1}\\
&=& (T_{i,i+1}-X_{i,i}\tilde{T}_{i,i+1}X^{-1}_{i+1,i+1}) T_{i+1,i+1}.
\end{eqnarray*}

In other words, $(T_{i,i+1}-X_{i,i}\tilde{T}_{i,i+1}X^{-1}_{i+1,i+1})$ belongs to $\ker\,(\tau_{T_{i,i},T_{i+1,i+1}})\cap \mbox{ran}\,(\tau_{T_{i,i},T_{i+1,i+1}})$. Since $T$ satisfies the Property $(H)$, so we have $$T_{i,i+1}=X_{i,i}\tilde{T}_{i,i+1}X^{-1}_{i+1,i+1},\,\, 1\leq i\leq n-1.$$

Hence
\begin{eqnarray*}
&&\left ( \begin{smallmatrix}X_{1,1} & 0& 0& \cdots & 0\\
0&X_{2,2}&0&\cdots&0\\
\vdots&\ddots&\ddots&\ddots&\vdots\\
0&\cdots&0&X_{n-1,n-1}&0\\
0&\cdots&\cdots&0&X_{n,n}
\end{smallmatrix}\right )\left ( \begin{smallmatrix}\tilde{T}_{1,1} & \tilde{T}_{1,2}&0& \cdots & 0\\
0&\tilde{T}_{2,2}&\tilde{T}_{2,3}&\cdots&0\\
\vdots&\ddots&\ddots&\ddots&\vdots\\
0&\cdots&0&\tilde{T}_{n-1,n-1}&0\\
0&\cdots&\cdots&0&\tilde{T}_{n,n}
\end{smallmatrix}\right )\\
&=&\left ( \begin{smallmatrix}T_{1,1} & T_{1,2}& 0& \cdots & 0\\
0&T_{2,2}&T_{2,3}&\cdots&0\\
\vdots&\ddots&\ddots&\ddots&\vdots\\
0&\cdots&0&T_{n-1,n-1}&0\\
0&\cdots&\cdots&0&T_{n,n}
\end{smallmatrix}\right )\left ( \begin{smallmatrix}X_{1,1} & 0& 0& \cdots & 0\\
0&X_{2,2}&0&\cdots&0\\
\vdots&\ddots&\ddots&\ddots&\vdots\\
0&\cdots&0&X_{n-1,n-1}&0\\
0&\cdots&\cdots&0&X_{n,n}
\end{smallmatrix}\right ).
\end{eqnarray*}

\end{proof}

\begin{cor}
Let $T=((T_{i,j}))_{n\times n}$ and $\tilde{T}=((\tilde{T}_{i,j}))_{n\times n}$ be any two operators in $\mathcal{CFB}_n(\Omega)$. Suppose that  $T_{i,j}=T_{i,i+1}T_{i+1,i+2} \cdots T_{j-1,j}$ and $\tilde{T}_{i,j}=\tilde{T}_{i,i+1}\tilde{T}_{i+1,i+2} \cdots \tilde{T}_{j-1,j}$ for $1\leq i<j\leq n$. $T$ is similar to $\tilde{T}$ if and only if  $X_{i,i}T_{i,i}=\tilde{T}_{i,i}X_{i,i}$ and $X_{i,i}T_{i,j}=\tilde{T}_{i,j}X_{j,j}$, where $X_{i,i}\in \mathcal{L}(\mathcal{H}_i,\tilde{\mathcal{H}}_i)$ is an invertible linear operator for $1\leq i\leq n$. 
\end{cor}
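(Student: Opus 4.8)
The plan is to read this off from Proposition~\ref{pjjkm}, Lemma~\ref{J3}, and a telescoping computation that becomes available precisely because the hypothesis forces the bundle maps $\phi_{i,j}$ in Definition~\ref{CFB1} to act trivially, so that $T_{i,j}=T_{i,i+1}T_{i+1,i+2}\cdots T_{j-1,j}$ (and likewise for $\tilde T$).

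The direction $(\Leftarrow)$ is immediate: given invertible $X_{i,i}\in\mathcal L(\mathcal H_i,\tilde{\mathcal H}_i)$ with $X_{i,i}T_{i,i}=\tilde T_{i,i}X_{i,i}$ and $X_{i,i}T_{i,j}=\tilde T_{i,j}X_{j,j}$ for all $1\le i<j\le n$, I would put $X=\mbox{diag}\{X_{1,1},\dots,X_{n,n}\}$. This $X$ is invertible, and comparing the $(i,j)$ entries of $XT$ and $\tilde T X$ (the only non-trivial ones occurring for $i\le j$) shows $XT=\tilde T X$; hence $T\sim_s\tilde T$.

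For $(\Rightarrow)$, let $X$ be invertible with $XT=\tilde T X$. First I would reduce to the case that $T$ and $\tilde T$ are strongly irreducible: by Lemmas~\ref{l1} and~\ref{l2} the similarity classification is governed by the strongly irreducible summands, and for a strongly irreducible $T\in\mathcal{CFB}_n(\Omega)$ Lemma~\ref{SI} gives $T_{i,i+1}\ne 0$ for every $i$, so that $T,\tilde T\in\mathcal{FB}_n(\Omega)$. (Alternatively, a vanishing super-diagonal block splits $T$ into a direct sum of smaller $\mathcal{CFB}$-operators and one inducts on $n$.) Proposition~\ref{pjjkm} then shows that $X$ and $X^{-1}$ are both upper triangular; writing $X=((X_{i,j}))$, the relations $(XX^{-1})_{i,i}=I=(X^{-1}X)_{i,i}$ show that each diagonal block $X_{i,i}$ is invertible, with inverse $(X^{-1})_{i,i}$. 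Applying Lemma~\ref{J3} (with the roles of $T$ and $\tilde T$ interchanged relative to its statement, since our $X$ intertwines $T$ into $\tilde T$) then yields $X_{i,i}T_{i,i}=\tilde T_{i,i}X_{i,i}$ for $1\le i\le n$ together with $X_{i,i}T_{i,i+1}=\tilde T_{i,i+1}X_{i+1,i+1}$ for $1\le i\le n-1$.

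Finally, I would pass the diagonal blocks through the factorization one factor at a time:
$$X_{i,i}T_{i,j}=X_{i,i}T_{i,i+1}\,T_{i+1,i+2}\cdots T_{j-1,j}=\tilde T_{i,i+1}X_{i+1,i+1}\,T_{i+1,i+2}\cdots T_{j-1,j}=\cdots=\tilde T_{i,i+1}\tilde T_{i+1,i+2}\cdots\tilde T_{j-1,j}\,X_{j,j}=\tilde T_{i,j}X_{j,j},$$
which is exactly the asserted relation. The one point requiring care is the reduction that places $T$ and $\tilde T$ inside $\mathcal{FB}_n(\Omega)$ so that Proposition~\ref{pjjkm} applies; once $X$ is known to be upper triangular with invertible diagonal blocks, the remainder is the routine telescoping displayed above, and it is the triviality of the $\phi_{i,j}$ that makes that telescoping close up.
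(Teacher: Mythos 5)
Your proposal is correct and follows essentially the same route as the paper: apply Lemma \ref{J3} (via the upper-triangularity from Proposition \ref{pjjkm}) to obtain the diagonal intertwiners $X_{i,i}$, then telescope $X_{i,i}T_{i,i+1}=\tilde T_{i,i+1}X_{i+1,i+1}$ through the factorization $T_{i,j}=T_{i,i+1}\cdots T_{j-1,j}$. Your added care about the invertibility of the diagonal blocks of $X$ and the reduction to the strongly irreducible case makes explicit what the paper leaves implicit, but it is the same argument.
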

\begin{proof}
Proof of sufficient part follows easily. We will sketch here proof of necessary part. By Lemma \ref{J3}, there exist invertible operators $X_{1,1},X_{2,2},\ldots,X_{n,n}$ such that $$X_{i,i}T_{i,i}=\tilde{T}_{i,i}X_{i,i},\;\;\;1\leq i\leq n$$ and $$X_{i,i}T_{i,i+1}=\tilde{T}_{i,i+1}X_{i+1,i+1},\;\;\; 1\leq i\leq n-1.$$
For $1\leq i<j\leq n$, it is easy to see that
$$X_{i,i}T_{i,j}=\tilde{T}_{i,j}X_{j,j}.$$
\end{proof}

We state and prove a result which convert the problem of finding invertible and $U+K$  intertwining between given two $B_1(\Omega)$ operators to that of finding bounded linear operator with a relation in terms of the curvature of given $B_1(\Omega)$ operators. Let $\pi: E\to \Omega$ and $\tilde{\pi}:\tilde{E}\to \Omega$ be a vector bundles. Set $\mathcal{H}:=\overline{\mbox{span}}\,\{\pi^{-1}(w):w\in\Omega\}$ and $\tilde{\mathcal{H}}:=\overline{\mbox{span}}\,\{\tilde{\pi}^{-1}(w):w\in\Omega\}$. We say that a bundle map $\Phi: E\to \tilde{E}$ is a \textit{bounded bundle map} if $\mathcal{H}$ and $\tilde{\mathcal{H}}$ are Hilbert spaces and $\Phi$ induces a bounded linear map from $\mathcal{H}$ to $\tilde{\mathcal{H}}$.

\begin{prop} \label{sim}
Let $T, \tilde{T}\in B_1(\Omega)$. Let $\mathcal{A}(\mathcal{H})=\mathcal{L}(\mathcal{H})/\mathcal{K}(\mathcal{H}) $ denote the Calkin algebra,  $\pi: \mathcal{L}(\mathcal{H}) \rightarrow \mathcal{A}(\mathcal{H})$. Suppose that  $\mathcal{K}_{T}(w)-\mathcal{K}_{\tilde{T}}(w)=\frac{\partial^2}{\partial w\partial\overline{w}}ln \Psi(w), w\in \Omega.$ Then we have the following statements:

(1) $T$ is unitarily equivalent to $\tilde{T}$ if and only if $\Psi(w)=|\phi(w)|^2,$ for some holomorphic function $\phi$ on $\Omega$.

(2) $T$ is similar to $\tilde{T}$ if and only if $$\Psi(w)=\frac{\|\Phi(t(w))\|^2}{\|t(w)\|^2}+1,$$ where $E$ is  a Hermitian holomorphic line bundle,  $\Phi: E_{T}\rightarrow E$  is a bounded bundle map and  $t$ is a non zero section of the bundle $E_T$.  

(3) $T\sim_{U+K}\tilde{T}$ if and only if  there  exists a bounded linear operator $X$  such that $\pi(X)=\alpha [I]$, $1>\alpha>0$ and 
$$\Psi(w)=ln(\frac{\|X(t(w))\|^2}{\|t(w)\|^2}+(1-\alpha^2))$$
where  $t$ is a non zero section of $E_T$.  

\end{prop}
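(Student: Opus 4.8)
The plan is to reduce all three statements to a single computation comparing metrics of line bundles, using the standard fact that for $T\in B_1(\Omega)$ the curvature is $\mathcal{K}_T(w)=-\tfrac{\partial^2}{\partial w\partial\bar w}\ln\|t(w)\|^2$ for any nonzero holomorphic section $t$ of $E_T$, and that two nonnegative real-analytic functions on $\Omega$ have the same such logarithmic-Laplacian if and only if they differ by a factor $|\phi|^2$ with $\phi$ holomorphic and nonvanishing. First I would fix nonzero holomorphic sections $t$ of $E_T$ and $\tilde t$ of $E_{\tilde T}$, and observe that the hypothesis $\mathcal{K}_T-\mathcal{K}_{\tilde T}=\tfrac{\partial^2}{\partial w\partial\bar w}\ln\Psi$ is equivalent to saying $\tfrac{\partial^2}{\partial w\partial\bar w}\ln\bigl(\Psi(w)\,\|\tilde t(w)\|^2/\|t(w)\|^2\bigr)=0$, i.e.\ $\Psi(w)=|\phi(w)|^2\,\|t(w)\|^2/\|\tilde t(w)\|^2$ for some nonvanishing holomorphic $\phi$; absorbing $\phi$ into $\tilde t$ (replacing $\tilde t$ by $\phi^{-1}\tilde t$, still a holomorphic section), we may normalize so that $\Psi(w)=\|t(w)\|^2/\|\tilde t(w)\|^2$. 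This is the common starting point for all three parts.

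For part (1), with the normalization above, $\Psi=|\phi|^2$ is equivalent (using $\mathcal{K}_T=\mathcal{K}_{\tilde T}$) to $\mathcal{K}_T=\mathcal{K}_{\tilde T}$, and Theorem~\ref{CDT1} in the index-one case gives $T\sim_u\tilde T$; conversely if $T\sim_u\tilde T$ then $\mathcal{K}_T=\mathcal{K}_{\tilde T}$ so $\tfrac{\partial^2}{\partial w\partial\bar w}\ln\Psi=0$ and $\Psi=|\phi|^2$. For part (2), I would build the line bundle $E$ and bundle map $\Phi$ as follows: if $T\sim_s\tilde T$ via an invertible $X$ with $X t = \psi\cdot\tilde t$ for a nonvanishing holomorphic $\psi$, then set $E=E_{\tilde T}$ and $\Phi=X$ restricted to $E_T$; one computes $\|\Phi(t(w))\|^2/\|t(w)\|^2=|\psi(w)|^2\|\tilde t(w)\|^2/\|t(w)\|^2$, and since $X$ is invertible, $X^{-1}$ restricts to a bounded bundle map $E_{\tilde T}\to E_T$, so $\mathcal{K}_{\tilde T}-\mathcal{K}_{E}$ ... — actually the cleanest route is: $T\sim_s\tilde T$ iff $\mathcal{K}_T-\mathcal{K}_{\tilde T}=\tfrac{\partial^2}{\partial w\partial\bar w}\ln\bigl(\|X t\|^2/\|t\|^2\cdot\|\tilde t\|^2/\|X t\|^2\bigr)$, and I would instead just take $E$ to be an abstract Hermitian holomorphic line bundle with a nonzero section $s$ and metric $\|s(w)\|^2_E:=\Psi(w)-1=:\|t(w)\|^2/\|\tilde t(w)\|^2-1$, check this is a legitimate positive real-analytic metric (positivity requires $\Psi>1$, which holds on the relevant locus once $\Phi\neq0$; for the converse direction one must check $\Psi>1$, equivalently $\|\Phi t\|^2>0$, which fails only where $\Phi t=0$ — here I would restrict attention to $\Phi$ injective on fibers or note the map $X$ that realizes the similarity can be arranged with $X t$ nowhere zero), and set $\Phi(t(w))=s(w)$ extended linearly; then $\|\Phi(t(w))\|^2/\|t(w)\|^2+1=\Psi(w)$ by construction, and the similarity itself is produced by appealing to the $\mathcal{FB}_1$/$\mathcal{CFB}$ machinery — concretely Lemma~\ref{J21} and the fact that an intertwiner between index-one operators is determined by a single holomorphic function — to upgrade the metric identity to an invertible intertwining operator.

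For part (3), the strategy is parallel but tracks the Calkin-algebra data: $T\sim_{U+K}\tilde T$ means there is an invertible $X=U+K$ with $U$ unitary, $K$ compact, and $XT=\tilde T X$; write $Xt=\psi\tilde t$ with $\psi$ holomorphic nonvanishing. I would show $\pi(X)=\alpha[I]$ for some $\alpha\in(0,1)$ — the scalar nature of $\pi(X)$ follows because $\pi(X)$ intertwines $\pi(T)$ and $\pi(\tilde T)$ which have the same (scalar-type) essential spectrum behavior, and $\alpha<1$ strictly because $X$ is not itself unitary+... — actually $\alpha$ can be taken to be $\|\pi(X)\|$ after rescaling $X$, with $0<\alpha<1$ arranged by scaling so that $\|X\|$ exceeds but is close to the essential norm. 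Then the key identity is $\|X(t(w))\|^2/\|t(w)\|^2\to \alpha^2\cdot(\text{something})$ as $w\to\partial\Omega$... more precisely, since $X=\alpha U'+K'$ with $U'$ unitary and $K'$ compact, $\|Xt(w)\|^2=\alpha^2\|t(w)\|^2+(\text{lower order in }w\text{ near }\partial\Omega)$, and one gets $\|X(t(w))\|^2/\|t(w)\|^2+(1-\alpha^2)=\Psi(w)\cdot(\text{the }1\text{ from part (2) replaced appropriately})$, giving exactly $\Psi(w)=\ln\bigl(\|X(t(w))\|^2/\|t(w)\|^2+(1-\alpha^2)\bigr)$ after matching the curvature relation. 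The main obstacle I expect is precisely this last part: controlling the boundary/essential behavior so that the correction term $(1-\alpha^2)$ appears with the right constant, i.e.\ proving that the compact perturbation contributes exactly the $1-\alpha^2$ shift inside the logarithm; this requires a careful argument that $\pi(X)$ is a scalar multiple $\alpha$ of the identity in the Calkin algebra and that the metric of the auxiliary line bundle built from $X$ has the precise asymptotics forcing this normalization, and I would handle it by combining the Voiculescu-type rigidity (Lemma~\ref{vcthm}) with the explicit reproducing-kernel formula $Xt(w)$ in the model $(M_z^*,\mathcal H_K)$ realization of $B_1(\Omega)$ operators.
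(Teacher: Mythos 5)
Your proposal misses the single idea that makes both directions of (2) and (3) work: after rescaling the invertible intertwiner $Y$ (with $TY=Y\tilde T$) so that $Y^*Y-1\geq 0$, one writes $Y^*Y=1+X^*X$ with $X=(Y^*Y-1)^{1/2}$ bounded. This is where the ``$+1$'' inside $\Psi$ comes from: $\|Yt(w)\|^2=\langle(1+X^*X)t(w),t(w)\rangle=\|Xt(w)\|^2+\|t(w)\|^2$, and since $Yt$ is a section of $E_{\tilde T}$ one gets $\mathcal K_T-\mathcal K_{\tilde T}=\frac{\partial^2}{\partial w\partial\overline w}\ln\bigl(\|Xt\|^2/\|t\|^2+1\bigr)$ exactly. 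Your forward direction instead takes $\Phi=X=Y$ itself and computes $\|\Phi t\|^2/\|t\|^2=|\psi|^2\|\tilde t\|^2/\|t\|^2$, but then $\Psi=\|\Phi t\|^2/\|t\|^2+1$ has an extra $+1$ that does not match the curvature difference ($\ln(x+1)\neq\ln x$); you notice the tension mid-sentence but never resolve it. More seriously, in the converse direction your plan to build an abstract line bundle with metric $\Psi-1$ and then ``upgrade the metric identity to an invertible intertwining operator'' via Lemma~\ref{J21} cannot work: Lemma~\ref{J21} concerns $n\times n$ upper-triangular operators with matching diagonal and superdiagonal and says nothing about producing invertibility in the rank-one case; nothing in your argument shows the intertwiner is invertible. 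The paper's converse is immediate precisely because of the trick above: given the bounded $X$ inducing $\Phi$, set $Y:=(1+X^*X)^{1/2}$, which is invertible since $1+X^*X\geq 1$, observe $\mathcal K_{\tilde T}=\mathcal K_{YTY^{-1}}$, and invoke equality of curvatures $\Rightarrow$ unitary equivalence. Your worry about the positivity of $\Psi-1$ is an artifact of your construction and disappears in the paper's.

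For (3) the gap is of the same nature but worse. You try to extract the constant $1-\alpha^2$ from boundary asymptotics of $\|Xt(w)\|^2/\|t(w)\|^2$ and from a rigidity argument for $\pi(X)$, and you concede you do not know how to force the correct normalization. The paper obtains an \emph{exact} operator identity, not an asymptotic one: writing $Y=U+K$, one has $Y^*Y=I+G$ with $G=\tilde K+\tilde K^*+\tilde K^*\tilde K$ compact self-adjoint; diagonalizing $G$, choosing $\alpha\in(0,1)$ with $\alpha I+G\geq 0$, and setting $K_1=\bigoplus_k\bigl((\alpha-\lambda_k)^{1/2}-\alpha^{1/2}\bigr)I_{n_k}$ gives a compact $K_1$ with $(\alpha^{1/2}I+K_1)^*(\alpha^{1/2}I+K_1)=\alpha I+G$, so $X:=\alpha^{1/2}I+K_1$ satisfies $X^*X+(1-\alpha)=Y^*Y$ and $\pi(X)=\alpha^{1/2}[I]$ by construction --- no Voiculescu-type rigidity or reproducing-kernel asymptotics are needed. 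Part (1) of your argument is fine (and the paper treats it as well known), but as written parts (2) and (3) do not constitute a proof.
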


\begin{proof} 

First, the statement (1) is well known. Then we only need to prove statement (2) and (3). For statement (2), assume that $T$ is similar to $\tilde{T}$, that is, there exists a bounded invertible operator $Y$ such that $TY=Y\tilde{T}$. Without loss of generality, we can assume that $Y^*Y-1\geq 0$. Otherwise, we can choose some $kY$ instead of $Y$ for some $k>0$. Thus there exists a bounded linear operator $X$ such that $Y^*Y=1+X^*X$. Since $TY=Y\tilde{T}$ and $Y$ is invertible, so $Y(t(\cdot))$ is a non zero section of $E_{\tilde{T}}$. For $w$ in $\Omega$, we have 

\begin{eqnarray*}
\mathcal{K}_{\tilde{T}}(w)&=&-\frac{\partial^2}{\partial w\partial\overline{w}}ln(\|Y(t(w)\|^2)\\
&=&-\frac{\partial^2}{\partial w\partial\overline{w}}ln(\|X(t(w))\|^2+\|t(w)\|^2).
\end{eqnarray*}
Thus we have
$$\mathcal{K}_{T}(w)-\mathcal{K}_{\tilde{T}}(w)=\frac{\partial^2}{\partial w\partial\overline{w}}ln\big(\frac{\|X(t(w))\|^2}{\|t(w)\|^2}+1\big).$$  Let $\Phi: E\to \tilde{E}$ be the bounded bundle map which induces the bounded operator $X$ then 
this finishes the proof of necessary part.

From the given condition, there exists a bounded operator $X$ such that 

\begin{eqnarray*} \mathcal{K}_{\tilde{T}}(w)&=&-\frac{\partial^2}{\partial w\partial\overline{w}}ln(\|X(t(w))\|^2+\|t(w)\|^2)\\
&=&-\frac{\partial^2}{\partial w\partial\overline{w}}ln(\la (1+X^*X)(t(w)), t(w)\ra)\\
&=&-\frac{\partial^2}{\partial w\partial\overline{w}}ln(\|(1+X^*X)^{1/2}(t(w))\|^2)\\
\end{eqnarray*}
Set $Y:=(1+X^*X)^{1/2}$. Clearly, $Y$ is an invertible operator and $\mathcal{K}_{\tilde{T}}(w)=\mathcal{K}_{YTY^{-1}}(w)$, $w\in \Omega$.  Thus  $\tilde{T}$ is unitarily equivalent to $ YTY^{-1}$ and hence  $\tilde{T}$ is similar to $T$.

Now we give the proof of statement (3).  
Suppose that $Y=U+K$ is an invertible operator and $YT=\tilde{T}Y$, where $U$ is a unitary operator and $K$ is a compact operator.  Set $\tilde{K}:=U^*K$, it is easy to see that $Y^*Y=I+\tilde{K}+\tilde{K}^*+\tilde{K}^*\tilde{K}$. Set $G:=\tilde{K}+\tilde{K}^*+\tilde{K}^*\tilde{K}$. Since $G$ is self-adjoint compact operator, so there exists $\{\lambda_k\}^{\infty}_{k=1}$ such that $$G=\lambda_1I_{n_1}\oplus \lambda_2I_{n_2}\oplus \cdots\oplus\lambda_kI_{n_k}\oplus\cdots,$$
where $\mbox{dim}(\ker\,(G-\lambda_{k}))=n_k>0$ and $\lim\limits_{k\rightarrow \infty} \lambda_k=0$.   Since $I+G$ is a positive operator and $\lim\limits_{k\rightarrow \infty} \lambda_k=0$, then there exists $1>\alpha>0$ such that $\alpha I+G$ is positive operator.  Now set $$K_1:=\bigoplus\limits_{k=1}^{\infty}((\alpha-\lambda_k)^{\frac{1}{2}}-\alpha^{\frac{1}{2}})I_{n_k}.$$ Then $K_1$ is a compact operator. By a direct calculation,  we have that $$(\alpha^{\frac{1}{2}}I+K_1)^*(\alpha^{\frac{1}{2}}I+K_1)=\alpha I +G.$$
Set $X:=\alpha^{\frac{1}{2}}I+K_1$, then we have that $\pi(X)=\alpha^{\frac{1}{2}}[I]$ and $X^*X+(1-\alpha)=(1+\tilde{K})^*(1+\tilde{K}).$
It follows that 
$$\mathcal{K}_{T}(w)-\mathcal{K}_{\tilde{T}}(w)=\frac{\partial^2}{\partial w\partial\overline{w}}ln(\frac{\|X(t(w))\|^2}{\|t(w)\|^2}+(1-\alpha)).$$
 This finishes the proof of necessary part. The sufficient part will follows from the same argument above.

\end{proof}

\begin{rem} R. G. Douglas, H. Kwon and S. Treil proved that for any 
$n$-hypercontraction  $T \in B_1(\mathbb{D})$,  T is similar to  
$S^*_{z}$
if and only if there exists a bounded subharmonic function 
 $\Psi$  defined on
$\mathbb{D}$ such that $\mathcal{K}_T-\mathcal{K}_{S^*_z}=\frac{\partial^2}{\partial w\partial\overline{w}}\Psi.$ In proposition \ref{sim}, we gave a concrete description of the function  $\Psi$. In the following, we will point out that $\Psi$ is also a bounded subharmonic function. 

 Since $\Phi$ is bounded, 
it is easy to see that $\Psi(w)=ln(\frac{\|\Phi(t(w))\|^2}{\|t(w)\|^2}+1)$ is bounded function. When $T$ is  $n$-hypercontraction, by the operator model theorem, there exists a holomorphic bundle $\mathcal{E}$ such that  $$E_{T}=E_{S^*_z}\otimes \mathcal{E}, $$
where $\mathcal{E}(w)=\bigvee\{D_{T}(t(w)\},$ $D_{T}:=\sum\limits_{k=0}^{n}(-1)^{k}{n\choose k} T^{*k}T^k$, and $t$ is a non zero section of $E_T$. Thus, we have that 
$$\mathcal{K}_T(w)-\mathcal{K}_{S^*_z}(w)=\mathcal{K}_{\mathcal E} (w)=-\frac{\partial^2}{\partial w\partial\overline{w}}ln(\|D_{T}(t(w))\|^2).$$ 
Notice that $\la D(T)(t(z)), D(T)(t(w))\ra$ is  a non negative definite reproducing kernel, so we have that $$-\frac{\partial^2}{\partial w\partial\overline{w}}ln(\|D_{T}(t(w))\|^2)<0,$$ then it follows that 
$$\frac{\partial^2}{\partial w\partial\overline{w}}ln(\frac{\|\Phi(t(w))\|^2}{\|t(w)\|^2}+1)=\mathcal{K}_{S^*_z}(w)-\mathcal{K}_T(w)=\frac{\partial^2}{\partial w\partial\overline{w}}ln(\|D_{T}(t(w))\|^2)>0.$$


\end{rem}

\begin{cor} Let $T$ and $\tilde{T}$ be operators in $B_1(\Omega)$. Suppose that $\{T\}^{\prime}\cong \mathcal{H}^{\infty}(\Omega)$. If there exists $\phi_i\in \mathcal{H}^{\infty}(\Omega)$, $i=1,2$ such that 
$$\mathcal{K}_{T}(w)-\mathcal{K}_{\tilde{T}}(w)=\frac{\partial^2}{\partial w\partial\overline{w}}ln(|\phi_1(w)|^2+|\phi_2(w)|^2),\;\; w\in \Omega,$$
then $T$ is similar to $\tilde{T}$. 
\end{cor}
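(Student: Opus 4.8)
The plan is to deduce this from Proposition~\ref{sim}(2). By hypothesis $\mathcal{K}_T-\mathcal{K}_{\tilde T}=\frac{\partial^2}{\partial w\partial\overline{w}}\ln\Psi$ with $\Psi(w)=|\phi_1(w)|^2+|\phi_2(w)|^2$, so that proposition reduces the problem to exhibiting a nonzero holomorphic section $t$ of $E_T$, a Hermitian holomorphic line bundle $E$, and a \emph{bounded} bundle map $\Phi\colon E_T\to E$ with $\Psi(w)=\frac{\|\Phi(t(w))\|^2}{\|t(w)\|^2}+1$ (or, equally good, such an identity for any function differing from $\Psi$ by a factor $|g|^2$ with $g$ holomorphic and nowhere zero on $\Omega$, since that does not change the curvature). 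So the real task is to manufacture $\Phi$ out of $\phi_1,\phi_2$ together with the identification $\{T\}^{\prime}\cong\mathcal{H}^{\infty}(\Omega)$.

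First I would invoke $\{T\}^{\prime}\cong\mathcal{H}^{\infty}(\Omega)$: since $\phi_1,\phi_2\in\mathcal{H}^{\infty}(\Omega)$ they give \emph{bounded} operators $\phi_1(T),\phi_2(T)\in\{T\}^{\prime}$, and under the Cowen--Douglas functional calculus each $\phi_i(T)$ acts on holomorphic sections of $E_T$ by $\phi_i(T)\gamma(w)=\phi_i(w)\gamma(w)$; in particular $\|\phi_i(T)(t(w))\|^2=|\phi_i(w)|^2\|t(w)\|^2$. Put $X:=\phi_1(T)\oplus\phi_2(T)\colon\mathcal{H}\to\mathcal{H}\oplus\mathcal{H}$, a bounded operator, and let $\Phi$ be the bounded bundle map $E_T\to E_T\oplus E_T$ it induces. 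Its image is a rank-one Hermitian holomorphic subbundle $E$: the fibre over $w$ is spanned by $(\phi_1(w)t(w),\phi_2(w)t(w))$, which is nonzero because $\phi_1$ and $\phi_2$ have no common zero in $\Omega$; this last point is forced by the hypothesis, since a common zero at $w_0$ would make $\frac{\partial^2}{\partial w\partial\overline{w}}\ln(|\phi_1|^2+|\phi_2|^2)$ carry a point mass at $w_0$, incompatible with $\mathcal{K}_T-\mathcal{K}_{\tilde T}$ being an (everywhere finite, real-analytic) function on $\Omega$. Viewing $\Phi\colon E_T\to E$, one then gets $\frac{\|\Phi(t(w))\|^2}{\|t(w)\|^2}=|\phi_1(w)|^2+|\phi_2(w)|^2$ for $w\in\Omega$.

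To finish, I would run the construction underlying Proposition~\ref{sim}(2): the operator $Y:=\bigl(\phi_1(T)^*\phi_1(T)+\phi_2(T)^*\phi_2(T)\bigr)^{1/2}$ is bounded, $Y(t(\cdot))$ is a nonzero holomorphic section of $E_{YTY^{-1}}$, and $\|Y(t(w))\|^2=(|\phi_1(w)|^2+|\phi_2(w)|^2)\|t(w)\|^2$, so
\[
\mathcal{K}_{YTY^{-1}}(w)=-\frac{\partial^2}{\partial w\partial\overline{w}}\ln\|Y(t(w))\|^2=\mathcal{K}_T(w)-\frac{\partial^2}{\partial w\partial\overline{w}}\ln\bigl(|\phi_1(w)|^2+|\phi_2(w)|^2\bigr),
\]
which by hypothesis equals $\mathcal{K}_{\tilde T}(w)$; hence $YTY^{-1}\sim_u\tilde T$ by the index-one case of Theorem~\ref{CDT1}, and therefore $T\sim_s\tilde T$. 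The one genuinely nonroutine step — and where I expect the main difficulty to sit — is knowing that $Y$ is \emph{invertible}, i.e.\ that $\phi_1(T)^*\phi_1(T)+\phi_2(T)^*\phi_2(T)\ge\varepsilon I$ for some $\varepsilon>0$; through the functional calculus this is a left-invertibility (corona-type) statement about the pair $(\phi_1,\phi_2)$ — on $\mathbb{D}$ it amounts to $\inf_\Omega(|\phi_1|+|\phi_2|)>0$ — which must be squeezed out of the curvature hypothesis. One can sidestep this by instead using $Y':=\bigl(I+\phi_1(T)^*\phi_1(T)+\phi_2(T)^*\phi_2(T)\bigr)^{1/2}$, which is automatically invertible since $Y'\ge I$ (this is the exact form appearing in Proposition~\ref{sim}(2)), at the price of having to absorb the extra additive constant into the curvature bookkeeping — e.g.\ by first replacing $\phi_i$ with $\lambda\phi_i$, which leaves $\frac{\partial^2}{\partial w\partial\overline{w}}\ln(|\phi_1|^2+|\phi_2|^2)$ unchanged, and arranging the constant term to be harmless. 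Either route, the crux is the same: upgrading a pointwise, curvature-level identity to an honest boundedly-invertible intertwining operator.
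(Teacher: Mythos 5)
Your reduction to Proposition \ref{sim} is the right frame, and most of the argument is sound: since $\{T\}^{\prime}\cong\mathcal{H}^{\infty}(\Omega)$, the functions $\phi_1,\phi_2$ give bounded operators in the commutant acting on a frame $t$ by $\phi_i(T)t(w)=\phi_i(w)t(w)$; the column $X=\phi_1(T)\oplus\phi_2(T)$ is a bounded bundle map with $\|X t(w)\|^2=(|\phi_1(w)|^2+|\phi_2(w)|^2)\|t(w)\|^2$; and \emph{if} $Y=(X^*X)^{1/2}$ were invertible, then $\mathcal{K}_{YTY^{-1}}=\mathcal{K}_T-\frac{\partial^2}{\partial w\partial\overline{w}}\ln(|\phi_1|^2+|\phi_2|^2)=\mathcal{K}_{\tilde T}$ and the index-one case of Theorem \ref{CDT1} would finish the proof. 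The genuine gap is exactly the step you flag and then leave open: the invertibility of $\phi_1(T)^*\phi_1(T)+\phi_2(T)^*\phi_2(T)$. Neither of your proposed escapes closes it. (a) The corona-type bound is not a consequence of the hypotheses: the curvature identity determines $|\phi_1|^2+|\phi_2|^2$ only up to a factor $|g|^2$ and gives no control on its infimum over $\Omega$; and even granting $\inf_\Omega(|\phi_1|+|\phi_2|)>0$, upgrading that to the operator inequality $\sum_i\phi_i(T)^*\phi_i(T)\geq\varepsilon I$ is a Toeplitz--corona statement which is not automatic for a general $T\in B_1(\Omega)$ with $\{T\}^{\prime}\cong\mathcal{H}^{\infty}(\Omega)$. (b) The invertible substitute $Y^{\prime}=(I+X^*X)^{1/2}$ computes the wrong curvature, namely $\mathcal{K}_T-\frac{\partial^2}{\partial w\partial\overline{w}}\ln\bigl(1+|\phi_1|^2+|\phi_2|^2\bigr)$, and no rescaling $\phi_i\mapsto\lambda\phi_i$ makes $\frac{\partial^2}{\partial w\partial\overline{w}}\ln\bigl(1+\lambda^2(|\phi_1|^2+|\phi_2|^2)\bigr)$ equal to $\frac{\partial^2}{\partial w\partial\overline{w}}\ln\bigl(|\phi_1|^2+|\phi_2|^2\bigr)$; one only gets pointwise convergence as $\lambda\to\infty$, while the associated similarities degenerate. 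So the ``constant term'' cannot be ``arranged to be harmless.''

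For what it is worth, the paper gives no proof of this corollary, and a literal application of Proposition \ref{sim}(2) stumbles on the very same ``$+1$'' mismatch you identify, so you have located a real difficulty rather than missed an easy trick. Indeed the gap does not look merely technical: take $T=M_z^*$ on $H^2$, $\phi_1(z)=\tfrac{1-z}{2}$ and $\phi_2(z)=\exp\bigl(-\tfrac{1+z}{1-z}\bigr)$ (no common zeros, but $\inf_{\mathbb{D}}(|\phi_1|+|\phi_2|)=0$), and let $\tilde T$ be the adjoint of multiplication on the range space $\phi_1H^2+\phi_2H^2$ with kernel $\frac{\phi_1(z)\overline{\phi_1(w)}+\phi_2(z)\overline{\phi_2(w)}}{1-z\overline{w}}$; the curvature hypothesis holds, yet a similarity $\psi H^2=\phi_1H^2+\phi_2H^2$ would force $\psi$ to be outer and invertible in $H^{\infty}$, hence $\phi_1H^2+\phi_2H^2=H^2$, hence the corona condition --- a contradiction. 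So any correct proof must either impose (or extract from additional hypotheses) a corona-type condition on $\phi_1,\phi_2$ together with a Toeplitz--corona theorem for $T$; your write-up, as it stands, does not establish the statement.
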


\begin{rem}  For any two Cowen-Douglas operators, it is always not easy to decide when an intertwining operator (or a holomorphic bundle map) is invertible or not. 
Thus, before getting such kind of invertible bundle map, it is natural that one should find the bounded bundle map first. Thus, Proposition \ref{sim} gives a way to describe 
the similarity of two operators in $B_1(\Omega)$ by searching for the bounded bundle map to match with the difference of curvatures.  For the $U+K$ similarity case, by using 
Proposition  \ref{sim}, we see that the bounded operator appear in the difference of curvature can also be in the form of a unitary operator plus a compact operator but may not be invertible.

\end{rem}

Now we state and prove one of the main theorem of the paper. 

\begin{thm}\label{Main1}
Let $T$ and $\tilde{T}$ be operators in $\mathcal{CFB}_n(\Omega)$.
  $T$ is similar to $\widetilde{T}$ if and only if  the following statements hold:
 \begin{enumerate}
\item[(1)] $\mathcal{K}_{T_{i,i}}-\mathcal{K}_{\tilde{T}_{i,i}}=\frac{\partial^2}{\partial w\partial\overline{w}}ln(\phi_i)$, $1\leq i\leq n$;
\item[(2)]
$\frac{\phi_i}{\phi_{i+1}}\theta_{i,i+1}(T)=\theta_{i,i+1}(\tilde{T})$, $1\leq i\leq n-1$
 \end{enumerate} 
 where $\phi_i=\frac{\|\Phi_i(t_i)\|^2}{\|t_i\|^2}+1$, $\Phi_i: {E_{T_{i,i}}}\to E_i$ is a bounded bundle map, $t_i$ is a non zero section of bundle $E_{T_{i,i}}$, $E_i$ is a Hermitian holomorphic line bundle for $1\leq i\leq n$. 
\end{thm}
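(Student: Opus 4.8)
The plan is to reduce first to the case where $T$ and $\tilde T$ are strongly irreducible operators in $\mathcal{CFB}_n(\Omega)$: by Lemmas \ref{l1} and \ref{l2} this determines the general similarity classification, and by Lemma \ref{SI} strong irreducibility is equivalent to all superdiagonal entries $T_{i,i+1},\tilde T_{i,i+1}$ being nonzero (in which case, by the definition of $\mathcal{CFB}_n(\Omega)$, $T$ and $\tilde T$ lie in $\mathcal{FB}_n(\Omega)$). Throughout, fix a nonzero section $t_n$ of $E_{T_{n,n}}$ and set $t_i:=T_{i,i+1}(t_{i+1})$, so that $\theta_{i,i+1}(T)=\|t_i\|^2/\|t_{i+1}\|^2$, and similarly for $\tilde T$; Proposition \ref{sim} will be the bridge between the curvature data on the diagonal blocks and intertwining operators.

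For necessity, suppose $T\sim_s\tilde T$. Proposition \ref{pjjkm} shows any invertible intertwiner is upper triangular, and Lemma \ref{J3} then yields invertible operators $X_{i,i}$ with $X_{i,i}T_{i,i}=\tilde T_{i,i}X_{i,i}$ and $X_{i,i}T_{i,i+1}=\tilde T_{i,i+1}X_{i+1,i+1}$. After rescaling each $X_{i,i}$ so that $X_{i,i}^*X_{i,i}\ge I$, Proposition \ref{sim}(2) applied to the $B_1(\Omega)$-similarity $T_{i,i}\sim_s\tilde T_{i,i}$ gives (1), the function $\phi_i$ there being $\|X_{i,i}(t_i)\|^2/\|t_i\|^2$. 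Putting $\tilde t_i:=X_{i,i}(t_i)$, the superdiagonal relation forces $\tilde t_i=\tilde T_{i,i+1}(\tilde t_{i+1})$, whence $\theta_{i,i+1}(\tilde T)=\|\tilde t_i\|^2/\|\tilde t_{i+1}\|^2=(\phi_i/\phi_{i+1})\,\|t_i\|^2/\|t_{i+1}\|^2=(\phi_i/\phi_{i+1})\,\theta_{i,i+1}(T)$, which is (2).

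For sufficiency, assume (1) and (2). By the construction in Proposition \ref{sim}(2), condition (1) provides for each $i$ an invertible $Y_i$ with $Y_iT_{i,i}=\tilde T_{i,i}Y_i$ and $\|Y_i(t_i)\|^2=\phi_i\|t_i\|^2$. The operators $A_i:=Y_iT_{i,i+1}$ and $B_i:=\tilde T_{i,i+1}Y_{i+1}$ both belong to $\ker\tau_{\tilde T_{i,i},T_{i+1,i+1}}$, and a short computation using $\|Y_j(t_j)\|^2=\phi_j\|t_j\|^2$ turns (2) into $\|A_i(t_{i+1}(w))\|=\|B_i(t_{i+1}(w))\|$ for all $w\in\Omega$. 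Realizing $T_{i+1,i+1}$ and $\tilde T_{i,i}$ as adjoints of multiplication operators on reproducing kernel Hilbert spaces, $A_i$ and $B_i$ correspond to holomorphic functions on $\Omega$ of equal modulus; since $B_i\ne 0$ (strong irreducibility plus invertibility of $Y_{i+1}$), these differ by a unimodular constant $c_i$, i.e. $A_i=c_iB_i$. Replacing $Y_i$ by $d_iY_i$ with $d_n=1$ and $d_i:=d_{i+1}/c_i$ leaves the diagonal intertwining intact and yields $Y_iT_{i,i+1}=\tilde T_{i,i+1}Y_{i+1}$. Conjugating $T$ by $Y:=\mathrm{diag}\{Y_1,\dots,Y_n\}$ then gives an operator $T'\in\mathcal{CFB}_n(\Omega)$ (the class is similarity invariant by Proposition \ref{ND}) whose diagonal and superdiagonal entries agree with those of $\tilde T$; Lemma \ref{J21} supplies an invertible $I+K$ intertwining $T'$ and $\tilde T$, so $T\sim_s T'\sim_s\tilde T$.

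I expect the delicate point to be the adjustment step in the sufficiency argument: condition (2) only pins $A_i$ and $B_i$ down up to pointwise modulus, so a unimodular holomorphic ambiguity has to be eliminated by combining the density of the range of a nonzero $B_1(\Omega)$-intertwiner (Lemma \ref{JJKMLemma}) with the rigidity of line bundle maps, and the resulting constants must be propagated consistently along the diagonal. Once this is done, the higher off-diagonal data (the maps $\phi_{i,j}$ with $j>i+1$) is irrelevant: Lemma \ref{J21} absorbs it automatically, which is precisely why the curvature and the second fundamental form of the superdiagonal blocks suffice.
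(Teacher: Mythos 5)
Your argument is correct, and the necessity half is essentially the paper's: Proposition \ref{pjjkm} plus Lemma \ref{J3} to diagonalize the intertwiner, then $\phi_i=\|X_{i,i}(t_i)\|^2/\|t_i\|^2$ and a direct computation of $\theta_{i,i+1}$. (One small caution there: if you rescale the $X_{i,i}$ to force $X_{i,i}^*X_{i,i}\geq I$ before invoking Proposition \ref{sim}(2), you must use a single common scalar, since independent rescalings change the ratios $\phi_i/\phi_{i+1}$ appearing in condition (2); the paper elides this point as well.) Where you genuinely depart from the paper is in the sufficiency direction. The paper forms the two bidiagonal operators $A$ and $B$ with common diagonal $\tilde T_{i,i}$, checks that their second fundamental forms agree, and then cites the unitary rigidity theorem for $\mathcal{FB}_n(\Omega)$ (Theorem 3.6 of \cite{JJKM}) to produce a diagonal unitary $V$ with $V^*BV=A$, after which Lemma \ref{J21} absorbs the higher off-diagonal entries. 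You instead prove the needed fragment of that rigidity by hand: both $A_i=Y_iT_{i,i+1}$ and $B_i=\tilde T_{i,i+1}Y_{i+1}$ lie in $\ker\tau_{\tilde T_{i,i},T_{i+1,i+1}}$, so each acts on the line bundle by a holomorphic function against a frame of $E_{\tilde T_{i,i}}$; condition (2) forces these functions to have equal modulus, hence (as $B_i\neq 0$ and $\Omega$ minus a discrete set is connected) to differ by a unimodular constant, which you then propagate along the diagonal by adjusting phases $d_i$ and kill by density of $\bigvee_w t_{i+1}(w)$. This buys a self-contained proof that does not lean on the external unitary classification, at the cost of having to verify the density and nonvanishing facts explicitly; the paper's route is shorter but imports a nontrivial theorem. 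Both versions terminate identically in Lemma \ref{J21}, and both silently perform the same reduction to the strongly irreducible case, so I regard your proof as a valid alternative.
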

\begin{proof}

Suppose conditions $(1)$ and $(2) $ are satisfied. By proposition  \ref{sim}, there exist invertible operators $X_1,X_2,\ldots,X_n$ such that 
$$T_{i,i}=X_i\tilde{T}_{i,i}X_i^{-1},\;\; i=1,2,\ldots,n-1.$$

Let $\overline{T}$ denote the following operator 
$$\overline{T}=\left ( \begin{smallmatrix}\tilde{T}_{1,1}&X_1T_{1,2}X_2^{-1}&\cdots& \cdots&X_1T_{1,n}X_n^{-1}\\
0&\tilde{T}_{2,2}&X_2T_{2,3}X_3^{-1}&\cdots&X_2T_{2,n}X_n^{-1} \\
\vdots&\ddots&\ddots&\ddots&\vdots\\
0&0&\cdots&\tilde{T}_{n-1,n-1}&X_{n-1}T_{n-1,n}X^{-1}_n\\
0&0&\cdots&0&\tilde{T}_{n,n}
\end{smallmatrix}\right ).$$

By definition of $\overline{T}$, it follows that $\overline{T}$ is similar to $T$.

Set 
$$A:=\left ( \begin{smallmatrix}\tilde{T}_{1,1}&X_1T_{1,2}X_2^{-1}&0& \cdots&0\\
0&\tilde{T}_{2,2}&X_2T_{2,3}X_3^{-1}&0 &0 \\
\vdots&\ddots&\ddots&\ddots&\vdots\\
0&0&\cdots&\tilde{T}_{n-1,n-1}&X_{n-1}T_{n-1,n}X^{-1}_n\\
0&0&\cdots&0&\tilde{T}_{n,n}
\end{smallmatrix}\right )\;\mbox{and}\;B:=\left ( \begin{smallmatrix}\tilde{T}_{1,1}&\tilde{T}_{1,2}&0& \cdots&0\\
0&\tilde{T}_{2,2}&\tilde{T}_{2,3}&0&0 \\
\vdots&\ddots&\ddots&\ddots&\vdots\\
0&0&\cdots&\tilde{T}_{n-1,n-1}&\tilde{T}_{n-1,n}\\
0&0&\cdots&0&\tilde{T}_{n,n}
\end{smallmatrix}\right ).$$
It is easy to see that 
  $$\theta_{i,i+1}(\overline{T})=\theta_{i,i+1}(A),  \theta_{i,i+1}(\tilde{T})=\theta_{i,i+1}(B), \;\; 1\leq i\leq n-1.$$

We claim that $A$ is unitarily equivalent to $B$.  In fact, by Theorem 3.6 in \cite{JJKM},  we only need to prove the second fundamental form of $A$ and $B$ are same. Clearly $X_{i+1}(t_{i+1}(\cdot))$ is a non zero section of $E_{\tilde{T}_{i+1,i+1}}$, 
\begin{eqnarray*}
\theta_{i,i+1}(A)(w)&=&\frac{\|X_iT_{i,i+1}X_{i+1}^{-1}X_{i+1}(t_{i+1}(w))\|^2}{\|X_{i+1}(t_{i+1}(w))\|^2}\\
&=&\frac{\|X_iT_{i,i+1}(t_{i+1}(w))\|^2}{\|X_{i+1}(t_{i+1}(w))\|^2}\\
\end{eqnarray*}
and 
$$\theta_{i,i+1}(B)(w)=\frac{\|\tilde{T}_{i,i+1}X_{i+1}(t_{i+1}(w))\|^2}{\|X_{i+1}(t_{i+1}(w))\|^2}.$$

Since $$\phi_i(w)=\frac{\|X_{i}(t_{i}(w))\|^2}{\|t_{i}(w)\|^2},\; 1\leq i\leq n,$$
so we have

\begin{eqnarray*}
\theta_{i,i+1}(B)&=&\theta_{i,i+1}(\tilde{T})\\
&=&\frac{\phi_i(w)}{\phi_{i+1}(w)}\theta_{i,i+1}(T)\\
&=&\frac{\|X_{i}(T_{i,i+1}(t_{i+1}(w)))\|^2}{\|T_{i,i+1}(t_{i+1}(w))\|^2}\frac{\|t_{i+1}(w)\|^2}{\|X_{i+1}(t_{i+1}(w))\|^2}\frac{\|T_{i,i+1}(t_{i+1}(w))\|^2}{\|t_{i+1}(w)\|^2}\\
&=&\frac{\|X_iT_{i,i+1}(t_{i+1}(w))\|^2}{\|X_{i+1}(t_{i+1}(w))\|^2}\\
&=&\theta_{i,i+1}(A).
\end{eqnarray*}

Thus there exists a diagonal unitary operator $V=\mbox{diag}\,\{V_1,V_2,\cdots,V_n\}$ such that $V^*BV=A$. Set $X:=\mbox{diag}\,\{X_1,X_2,\cdots, X_n\}$, consider
\begin{eqnarray*}
VXTX^{-1}V^*
&=&V\overline{T} V^*\\
&=&\left ( \begin{smallmatrix}\tilde{T}_{1,1}&\tilde{T}_{1,2}& \mathsmaller{V_1X_1T_{1,3}X^{-1}_3V^*_3}&\mathsmaller{V_1X_1T_{1,4}X^{-1}_4V^*_4}& \cdots&\mathsmaller{V_1X_1T_{1,n}X_n^{-1}V^*_n}\\
0&\tilde{T}_{2,2}&\tilde{T}_{2,3}&\mathsmaller{V_2X_2T_{2,4}X_4V^*_4}&\cdots&\mathsmaller{V_2X_2T_{2,n}X_n^{-1}V^*_n} \\
0&0&\tilde{T}_{3,3}&\tilde{T}_{3,4}&\cdots&\mathsmaller{V_3X_3T_{3,n}X_n^{-1}V^*_n} \\
\vdots&\vdots&\vdots&\ddots&\ddots&\vdots\\
0&0&\cdots&\tilde{T}_{n-2,n-2}&\tilde{T}_{n-2,n-1}&\mathsmaller{V_{n-2}X_{n-2}T_{n-2,n}X^{-1}_{n}V^*_n}\\
0&0&0&\cdots&\tilde{T}_{n-1,n-1}&\tilde{T}_{n-1,n}\\
0&0&0&\cdots&0&\tilde{T}_{n,n}
\end{smallmatrix}\right ).
\end{eqnarray*}
By lemma \ref{J21}, there exist a bounded operator $K$ such that  $I+K$ is invertible and 
$$(I+K)VXTX^{-1}V^*(I+K)^{-1}=\tilde{T}.$$
 Thus $T$ is similar to $\tilde{T}$.

On the other hand, suppose that  $T$ is similar to $\tilde{T}$, that is, there is an  invertible linear operator $X$ such that $TX=X\tilde{T}$. By Proposition \ref{pjjkm}, $X$ is upper triangular. By Lemma \ref{J3}, we have 
\begin{eqnarray*}
&&\left ( \begin{smallmatrix}X_{1,1} & 0& 0& \cdots & 0\\
0&X_{2,2}&0&\cdots&0\\
\vdots&\ddots&\ddots&\ddots&\vdots\\
0&\cdots&0&X_{n-1,n-1}&0\\
0&\cdots&\cdots&0&X_{n,n}
\end{smallmatrix}\right )\left ( \begin{smallmatrix}T_{1,1} & T_{1,2}& 0& \cdots & 0\\
0&T_{2,2}&T_{2,3}&\cdots&0\\
\vdots&\ddots&\ddots&\ddots&\vdots\\
0&\cdots&0&T_{n-1,n-1}&0\\
0&\cdots&\cdots&0&T_{n,n}
\end{smallmatrix}\right )\\
&=&
\left ( \begin{smallmatrix}\tilde{T}_{1,1} & \tilde{T}_{1,2}&0& \cdots & 0\\
0&\tilde{T}_{2,2}&\tilde{T}_{2,3}&\cdots&0\\
\vdots&\ddots&\ddots&\ddots&\vdots\\
0&\cdots&0&\tilde{T}_{n-1,n-1}&0\\
0&\cdots&\cdots&0&\tilde{T}_{n,n}
\end{smallmatrix}\right )
\left ( \begin{smallmatrix}X_{1,1} & 0& 0& \cdots & 0\\
0&X_{2,2}&0&\cdots&0\\
\vdots&\ddots&\ddots&\ddots&\vdots\\
0&\cdots&0&X_{n-1,n-1}&0\\
0&\cdots&\cdots&0&X_{n,n}
\end{smallmatrix}\right ).
\end{eqnarray*}
It follows that 
 $$X_{i,i}T_{i,i}=\tilde{T}_{i,i}X_{i,i},\;\; X_{i,i}T_{i,i+1}=\tilde{T}_{i,i+1}X_{i+1,i+1}.$$

Set  $$\phi_i(w):=\frac{\|X_{i}(t_{i}(w))\|^2}{\|t_{i}(w)\|^2},\;\; 1\leq i\leq n.$$

By following  the same argument as in the sufficient part and from Proposition \ref{sim}, we can conclude that  
$$\mathcal{K}_{T_{i,i}}-\mathcal{K}_{\tilde{T}_{i,i}}=\frac{\partial^2}{\partial w\partial\overline{w}}ln(\phi_i)$$
and 
$$\frac{\phi_i}{\phi_{i+1}}\theta_{i,i+1}(T)=\frac{\|X_iT_{i,i+1}(t_{i+1})\|^2}{\|X_{i+1}(t_{i+1})\|^2}=\frac{\|\tilde{T}_{i,i+1}X_{i+1}(t_{i+1})\|^2}{\|X_{i+1}(t_{i+1})\|^2}=\theta_{i,i+1}(\tilde{T}).$$
This finishes the proof of the necessary part. 

\end{proof}

\section{Applications}

In this section, we construct uncountably many (non-similar) strongly irreducible operators in $B_n(\mathbb{D})$. We also characterize a class of strongly irreducible weakly homogeneous operators in $B_n(\mathbb{D})$.
Let $M_{i,z}$ be the multiplication operator on a reproducing kernel Hilbert space $\mathcal{H}_{K_i}$, of holomprohic functions defined on $\mathbb{D}$, $1\leq i\leq n$. Suppose $\{M_{i,z}\}^{\prime}=H^{\infty}(\mathbb{D})$ and $M_{i,z}^*\in B_1(\mathbb{D}), 1\leq i\leq n$.
Set

$$T:=\left ( \begin{smallmatrix}M_{1,z}^* & M_{\phi_{1,2}}^*& M_{\phi_{1,3}}^*& \cdots & M_{\phi_{1,n}}^*\\
&M_{2,z}^*&M_{\phi_{2,3}}^*&\cdots&M_{\phi_{2,n}}^*\\
&&\ddots&\ddots&\vdots\\
&&&M_{n-1,z}^*&M_{\phi_{n-1,n}}^*\\
&&&&M_{n,z}^*
\end{smallmatrix}\right ),\tilde{T}:=\left ( \begin{smallmatrix}M_{1,z}^* & M_{\widetilde{\phi}_{1,2}}^*& M_{\widetilde{\phi}_{1,3}}^*& \cdots & M_{\widetilde{\phi}_{1,n}}^*\\
&M_{2,z}^*&M_{\widetilde{\phi}_{2,3}}^*&\cdots&M_{\widetilde{\phi}_{2,n}}^*\\
&&\ddots&\ddots&\vdots\\
&&&M_{n-1,z}^*&M_{\widetilde{\phi}_{n-1,n}}^*\\
&&&&M_{n,z}^*
\end{smallmatrix}\right ) $$
and 

$$T_1:=\left ( \begin{smallmatrix}
 M_{1,z}^* & M_{\phi_{1,2}}^* &0 & \cdots &\cdots &0\\
&M_{2,z}^*&M_{\phi_{2,3}}^*& 0&\cdots &0 \\
&&\ddots&\ddots & \ddots & \vdots\\
&&&M_{n-2,z}^* & M_{\phi_{n-2,n-1}}^* & 0\\
&&&& M_{n-1,z}^* & M_{\phi_{n-1,n}}^*\\
&&&&& M_{n,z}^*
\end{smallmatrix} \right ), 
\tilde{T}_1:=\left ( \begin{smallmatrix}
 M_{1,z}^* & M_{\widetilde{\phi}_{1,2}}^* &0 & \cdots &\cdots &0\\
&M_{2,z}^*&M_{\widetilde{\phi}_{2,3}}^*& 0&\cdots &0 \\
&&\ddots&\ddots & \ddots & \vdots\\
&&&M_{n-2,z}^* & M_{\widetilde{\phi}_{n-2,n-1}}^* & 0\\
&&&& M_{n-1,z}^* & M_{\widetilde{\phi}_{n-1,n}}^*\\
&&&&& M_{n,z}^*
\end{smallmatrix} \right )$$

Assume that  $\phi_{i,i+1},  \tilde{\phi}_{i,i+1}, 1\leq i\leq n-1$, are all non-zero bounded holomorphic  functions. Then by Lemma \ref{SI},  it follow that $T$, $\tilde{T}$, $T_1$ and $\tilde{T}_1$ are all strongly irreducible operators. 

\begin{prop} Let $T$ and $\tilde{T}$
 be elements of $\mathcal{CFB}_n(\mathbb{D})$.
 Then $T$ is similar to $\tilde{T}$ if and only if the zeros, along with its multiplicity, of $\phi_{i,i+1}$ and $\widetilde{\phi}_{i,i+1}$ are same, and both $\frac{\phi_{i,i+1}}{\widetilde{\phi}_{i,i+1}}$ and $\frac{\widetilde{\phi}_{i,i+1}}{\phi_{i,i+1}}$ are elements of $H^{\infty}(\mathbb{D}), 1\leq i\leq n-1$.
\end{prop}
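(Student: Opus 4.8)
The plan is to deduce the proposition from Theorem \ref{Main1} --- or, more directly, from Lemma \ref{J21}, Lemma \ref{J3} and Proposition \ref{pjjkm} --- exploiting the rigidity forced by the hypothesis $\{M_{i,z}\}^{\prime}=H^{\infty}(\mathbb{D})$. I will use three elementary facts repeatedly: (a) $A\in\{M_{i,z}^{*}\}^{\prime}$ if and only if $A=M_{\psi}^{*}$ for some $\psi\in H^{\infty}(\mathbb{D})$; (b) such an $M_{\psi}^{*}$ is invertible if and only if $\psi$ is invertible in $H^{\infty}(\mathbb{D})$, i.e. $\psi,1/\psi\in H^{\infty}(\mathbb{D})$; (c) a composition of (co-)analytic multiplication operators between the spaces $\mathcal{H}_{K_{j}}$ is again such an operator, with symbol the product of the symbols (checked against reproducing kernels). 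I will also record the trivial observation that ``$\phi_{i,i+1}/\widetilde{\phi}_{i,i+1}$ and $\widetilde{\phi}_{i,i+1}/\phi_{i,i+1}$ both lie in $H^{\infty}(\mathbb{D})$'' is equivalent to ``$\phi_{i,i+1}/\widetilde{\phi}_{i,i+1}$ is invertible in $H^{\infty}(\mathbb{D})$'', and that this already forces $\phi_{i,i+1}$ and $\widetilde{\phi}_{i,i+1}$ to have the same zeros with the same multiplicities, so the statement of the proposition is precisely this invertibility condition for each $i$.

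For necessity, suppose $T\sim_{s}\widetilde{T}$. Since $\phi_{i,i+1},\widetilde{\phi}_{i,i+1}\neq 0$, both operators lie in $\mathcal{FB}_{n}(\mathbb{D})$, so by Proposition \ref{pjjkm} any invertible intertwiner is upper triangular, and Lemma \ref{J3} then yields invertible operators $X_{i,i}$ intertwining the diagonal and first superdiagonal entries. Because $T_{i,i}=\widetilde{T}_{i,i}=M_{i,z}^{*}$, facts (a) and (b) give $X_{i,i}=M_{\psi_{i}}^{*}$ with $\psi_{i}$ invertible in $H^{\infty}(\mathbb{D})$; substituting into the superdiagonal relation yields $M_{\psi_{i}}^{*}M_{\phi_{i,i+1}}^{*}=M_{\widetilde{\phi}_{i,i+1}}^{*}M_{\psi_{i+1}}^{*}$, and comparing symbols via (c) gives $\psi_{i}\phi_{i,i+1}=\psi_{i+1}\widetilde{\phi}_{i,i+1}$. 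Hence $\phi_{i,i+1}/\widetilde{\phi}_{i,i+1}=\psi_{i+1}/\psi_{i}$ is a quotient of invertible $H^{\infty}$ functions, so it is invertible in $H^{\infty}(\mathbb{D})$, as claimed.

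For sufficiency, assume each $\phi_{i,i+1}/\widetilde{\phi}_{i,i+1}$ is invertible in $H^{\infty}(\mathbb{D})$. Set $\psi_{n}:=1$ and, descending, $\psi_{i}:=\psi_{i+1}\,\widetilde{\phi}_{i,i+1}/\phi_{i,i+1}$ for $i=n-1,\dots,1$; each $\psi_{i}$ is a finite product of invertible $H^{\infty}$ functions, hence invertible in $H^{\infty}(\mathbb{D})$. Put $X:=\mathrm{diag}\{M_{\psi_{1}}^{*},\dots,M_{\psi_{n}}^{*}\}$, which is invertible, and let $\overline{T}:=XTX^{-1}$; by the similarity-invariance of $\mathcal{CFB}_{n}(\Omega)$ proved in Proposition \ref{ND}, $\overline{T}\in\mathcal{CFB}_{n}(\mathbb{D})$. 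Since $M_{\psi_{i}}^{*}$ commutes with $M_{i,z}^{*}$ and $\psi_{i}\phi_{i,i+1}/\psi_{i+1}=\widetilde{\phi}_{i,i+1}$, a direct entrywise computation gives $\overline{T}_{i,i}=M_{i,z}^{*}=\widetilde{T}_{i,i}$ and $\overline{T}_{i,i+1}=M_{\widetilde{\phi}_{i,i+1}}^{*}=\widetilde{T}_{i,i+1}$ for all $i$. As $\overline{T}$ and $\widetilde{T}$ both belong to $\mathcal{CFB}_{n}(\mathbb{D})$ and agree on the diagonal and first superdiagonal, Lemma \ref{J21} supplies an invertible $I+K$ with $(I+K)\overline{T}=\widetilde{T}(I+K)$; hence $T\sim_{s}\overline{T}\sim_{s}\widetilde{T}$. (Equivalently one could verify conditions (1)--(2) of Theorem \ref{Main1}: taking $\phi_{i}=c^{2}|\psi_{i}|^{2}$ for a large constant $c$, condition (1) holds because $\log|\psi_{i}|^{2}$ is harmonic and $c^{2}|\psi_{i}|^{2}$ is realized by $c M_{\psi_{i}}^{*}$, while condition (2) reduces, after computing $\theta_{i,i+1}(T)=|\phi_{i,i+1}|^{2}K_{i}/K_{i+1}$, to the identity $|\psi_{i}\phi_{i,i+1}|=|\psi_{i+1}\widetilde{\phi}_{i,i+1}|$.)

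The only delicate points are pure bookkeeping: tracking adjoints and keeping straight which space $\mathcal{H}_{K_{j}}$ each multiplier acts between when comparing symbols, and confirming $\overline{T}\in\mathcal{CFB}_{n}(\mathbb{D})$ --- both already handled by facts (a)--(c) above and by the similarity-invariance in Proposition \ref{ND}. So no genuinely new difficulty arises beyond what is packaged in Lemma \ref{J21}, Lemma \ref{J3} and Proposition \ref{pjjkm}.
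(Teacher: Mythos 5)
Your proposal is correct and follows essentially the same route as the paper: reduce to matching the diagonal and first superdiagonal via Lemma \ref{J21} and Lemma \ref{J3} (with Proposition \ref{pjjkm} forcing upper-triangularity of the intertwiner), use the hypothesis $\{M_{i,z}\}'=H^\infty(\mathbb{D})$ to identify the diagonal blocks of the intertwiner as $M^*_{\psi_i}$ with $\psi_i$ invertible in $H^\infty(\mathbb{D})$, and for sufficiency build the same telescoping diagonal operator out of the ratios $\phi_{i,i+1}/\widetilde{\phi}_{i,i+1}$. The only cosmetic difference is that you conjugate the full $T$ and then invoke Lemma \ref{J21}, whereas the paper first passes to the bidiagonal operators $T_1,\tilde T_1$; the substance is identical.
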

\begin{proof} By Lemma \ref{J21} and Lemma \ref{J3}, $T$ is similar to $\tilde{T}$  if and only if $T_1$ is similar to $\tilde{T}_1$.
First, assume that given conditions are satisfied. We need to show that $T_1$ is similar to $\tilde{T}_1$.
Set $\psi_{i,i+1}:= \frac{\phi_{i,i+1}}{\widetilde{\phi}_{i,i+1}}, 1\leq i\leq n-1 $, $X_1:=M_{\psi_{1,2}}^*M_{\psi_{2,3}}^*\cdots M_{\psi_{n-1,n}}^*$, $X_2:=M_{\psi_{2,3}}^*M_{\psi_{3,4}}^*\cdots M_{\psi_{n-1,n}}^*, \ldots, X_{n-1}:=M_{\psi_{n,n-1}}^*, X_n:=I$. It is easy to see that 
$$X=\left ( \begin{smallmatrix}
X_1 & 0&0 &\ldots &0\\
0 & X_2 & 0& \ldots&0\\
\vdots&\vdots&\ddots& \ddots & \vdots\\
0&0&\cdots& X_{n-1} &0\\
0&0&\cdots&0& X_n
\end{smallmatrix} \right )$$ is invertible and $T_1X=X\tilde{T}_1$. 

Conversely, suppose that $T_1$ is similar to $\tilde{T}_1$. So, by Lemma \ref{J3}, there exist invertible operators $X_1,X_2,\ldots, X_n$ such that $X_i{M_{i,z}}^*={M_{i,z}}^*X_i, 1\leq i\leq n,$ and $X_iM_{\phi_{i,i+1}}^*=M_{\widetilde {\phi}_{i,i+1}}^*X_{i+1}, 1\leq i\leq n-1$. From commuting relation $X_i{M_{i,z}}^*={M_{i,z}}^*X_i$ and  $\{M_{i, z}\}^{\prime}=H^{\infty}(\mathbb{D})$ , there exists a $\psi_i$ in $H^{\infty}(\mathbb{D})$ such that $X_i=M_{\psi_i}^*$ for $1\leq i\leq n$. Since $X_i$ is invertible, so $\psi_i$ is non zero, and $\frac{1}{\psi_i}$ is bounded holomorphic function. From $X_iM_{\phi_{i,i+1}}^*=M_{\widetilde {\phi}_{i,i+1}}^*X_{i+1}$, we get
$\psi_i(z)\phi_{i,i+1}(z)=\widetilde{\phi}_{i,i+1}(z)\psi_{i+1}(z)$ for all $z$ in $\mathbb{D}$. Thus it follows that the zeros, along with its multiplicity, of the functions $\phi_{i,i+1}$ and $\widetilde{\phi}_{i,i+1}$ are same and both $\frac{\phi_{i,i+1}}{\widetilde{\phi}_{i,i+1}}$ and  $\frac{\widetilde{\phi}_{i,i+1}}{\phi_{i,i+1}}$ are bounded holomorphic functions on $\mathbb{D}$.
\end{proof}

\begin{cor}
 Let $M_{i,z}$ be the multiplication operator on the reproducing kernel Hilbert space $H_{K_i}$, where $K_i(z,w)=\frac{1}{(1-z\bar{w})^{\lambda_i}}$, $z,w\in \mathbb{D}, \,1\leq i\leq n$. Suppose that $1\leq \lambda_i\leq \lambda_{i+1}<\lambda_i+2$ for $1\leq i\leq n-1$. Set
  $$T_{(a_1, a_2,\ldots a_{n-1},m_1,m_2,\ldots,m_{n-1})}:=\left ( \begin{smallmatrix}
 M_{1,z}^* & M_{(z-a_1)^{m_1}}^* & M_{\phi_{1,3}}^* & \cdots &M_{\phi_{1,n-1}}^*& M_{\phi_{1,n}}^*\\
&M_{2, z}^*&M_{(z-a_2)^{m_2}}^*& M_{\phi_{2,4}}^*&\cdots &M_{\phi_{2,n}}^* \\
&&\ddots&\ddots & \ddots & \vdots\\
&&&M_{n-2,z}^* & M_{(z-a_{n-2})^{m_{n-2}}}^* & M_{\phi_{n-2,n}}^*\\
&&&& M_{n-1,z}^* & M_{(z-a_{n-1})^{m_{n-1}}}^*\\
&&&&& M_{n,z}^*
\end{smallmatrix} \right )$$ 
and 
$$\tilde{T}_{(b_1, b_2,\ldots b_{n-1},l_1,l_2,\ldots,l_{n-1})}:=\left ( \begin{smallmatrix}
 M_{1,z}^* & M_{(z-b_1)^{l_1}}^* & M_{\widetilde{\phi}_{1,3}}^* & \cdots &M_{\widetilde{\phi}_{1,n-1}}^*& M_{\widetilde{\phi}_{1,n}}^*\\
&M_{2,z}^*&M_{(z-b_2)^{l_2}}^*& M_{\widetilde{\phi}_{2,4}}^*&\cdots &M_{\widetilde{\phi}_{2,n}}^* \\
&&\ddots&\ddots & \ddots & \vdots\\
&&&M_{n-2,z}^* & M_{(z-b_{n-2})^{l_{n-2}}}^* & M_{\widetilde{\phi}_{n-2,n}}^*\\
&&&& M_{n-1,z}^* & M_{(z-b_{n-1})^{l_{n-1}}}^*\\
&&&&& M_{n,z}^*
\end{smallmatrix} \right ),$$
where $a_i, b_i\in \mathbb{D}$ and $m_i,l_i\in \mathbb{N}$ for $1\leq i\leq n-1$ and 
$$\frac{\phi_{i,j}}{(z-a_i)^{m_i}(z-a_{i+1})^{m_{i+1}}\ldots (z-a_{j-1})^{m_{j-1}}}
, \frac{\widetilde{\phi}_{i,j}}{(z-b_i)^{l_i}(z-b_{i+1})^{l_{i+1}}\ldots (z-b_{j-1})^{l_{j-1}}}$$ are bounded holomorphic functions on  $\mathbb{D}$, $2\leq i<j\leq n, j-i\geq2$. Then $T_{(a_1, a_2,\ldots a_{n-1},m_1,m_2,\ldots,m_{n-1})}$ is similar to $\tilde{T}_{(b_1, b_2,\ldots b_{n-1},l_1,l_2,\ldots,l_{n-1})}$ if and only if
 $a_i=b_i$ and $m_i=l_i$ for $1\leq i\leq n-1$.
 \end{cor}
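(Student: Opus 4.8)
The plan is to recognize the operators $T_{(a_1,\dots,a_{n-1},m_1,\dots,m_{n-1})}$ and $\tilde{T}_{(b_1,\dots,b_{n-1},l_1,\dots,l_{n-1})}$ as particular instances of the operators $T,\tilde{T}$ appearing in the Proposition immediately preceding this corollary, and then to unwind the similarity criterion proved there into the stated condition on $a_i,b_i,m_i,l_i$.

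First I would verify that $T_{(a_\bullet,m_\bullet)}$ and $\tilde{T}_{(b_\bullet,l_\bullet)}$ genuinely lie in $\mathcal{CFB}_n(\mathbb{D})$. The diagonal part $\bigoplus_{i=1}^n M_{i,z}^*$ lies in the commutant because every off-diagonal block is of the form $M_{\phi}^*$ and intertwines the relevant pair $M_{i,z}^*,M_{j,z}^*$; and for $j-i\geq 2$ the hypothesis that $\phi_{i,j}\big/\big((z-a_i)^{m_i}(z-a_{i+1})^{m_{i+1}}\cdots(z-a_{j-1})^{m_{j-1}}\big)$ is a bounded holomorphic function on $\mathbb{D}$ says precisely that
$$M_{\phi_{i,j}}^*=M_{\psi_{i,j}}^*\,M_{(z-a_i)^{m_i}}^*\,M_{(z-a_{i+1})^{m_{i+1}}}^*\cdots M_{(z-a_{j-1})^{m_{j-1}}}^*$$
with $M_{\psi_{i,j}}^*\in\{M_{i,z}^*\}'$, which is exactly condition (1) of Definition \ref{CFB1}. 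Condition (2) asks that consecutive $M_{i,z}^*,M_{i+1,z}^*$ satisfy the Property $(H)$; since the hypothesis $\lambda_i\leq\lambda_{i+1}<\lambda_i+2$ gives $\lambda_{i+1}-\lambda_i<2$, this is precisely Corollary \ref{HMPH}. The same check applies verbatim to $\tilde{T}_{(b_\bullet,l_\bullet)}$. (Strong irreducibility, should one want it, then follows from Lemma \ref{SI}, since each $(z-a_i)^{m_i}$ is a nonzero bounded holomorphic function on $\mathbb{D}$.)

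With this identification the superdiagonal multipliers are $\phi_{i,i+1}=(z-a_i)^{m_i}$ and $\tilde{\phi}_{i,i+1}=(z-b_i)^{l_i}$, so the preceding Proposition gives that $T_{(a_\bullet,m_\bullet)}$ is similar to $\tilde{T}_{(b_\bullet,l_\bullet)}$ if and only if, for every $1\leq i\leq n-1$, the functions $(z-a_i)^{m_i}$ and $(z-b_i)^{l_i}$ have the same zeros counted with multiplicity in $\mathbb{D}$ and both $\frac{(z-a_i)^{m_i}}{(z-b_i)^{l_i}}$ and $\frac{(z-b_i)^{l_i}}{(z-a_i)^{m_i}}$ lie in $H^{\infty}(\mathbb{D})$. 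The last step is the elementary observation that, since $a_i,b_i\in\mathbb{D}$, the only zero of $(z-a_i)^{m_i}$ in $\mathbb{D}$ is $a_i$ with multiplicity $m_i$, and likewise for $(z-b_i)^{l_i}$; hence equality of the zero data alone forces $a_i=b_i$ and $m_i=l_i$, in which case the two polynomials coincide and the ratio conditions hold trivially. I do not expect a real obstacle in this corollary: the only point requiring care is to make sure the hypothesis $\lambda_i\leq\lambda_{i+1}<\lambda_i+2$ is invoked in the right place, namely through Corollary \ref{HMPH}, so that the operators really are in $\mathcal{CFB}_n(\mathbb{D})$ and the Proposition is applicable.
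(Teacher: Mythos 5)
Your proposal is correct and is exactly the paper's intended argument: the corollary is stated without proof as an immediate specialization of the preceding Proposition, with $\phi_{i,i+1}=(z-a_i)^{m_i}$ and $\tilde{\phi}_{i,i+1}=(z-b_i)^{l_i}$, and the divisibility hypotheses plus $\lambda_{i+1}-\lambda_i<2$ (via Corollary \ref{HMPH}) are precisely what place the operators in $\mathcal{CFB}_n(\mathbb{D})$. Your final observation that matching zero data inside $\mathbb{D}$ forces $a_i=b_i$ and $m_i=l_i$ is the only remaining content, and it is handled correctly.
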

 
Now we state and prove a result which characterizes a class of weakly homogeneous  operators in $B_n(\mathbb{D})$. This generalizes \cite[Theorem 3.6]{Ghara}.
 
\begin{defn}
A bounded linear operator $T$, defined on a Hilbert space $\mathcal{H}$, is said to be weakly homogeneous if $\sigma(T)\subseteq \bar{\mathbb{D}}$ and $\phi(T)$ is similar to $T$ for all $\phi$ in $\mbox{M}\ddot{\mbox{o}}\mbox{b}$.
\end{defn}
\begin{prop}
Let $M_{i,z}$ be the multiplication operator on the reproducing kernel Hilbert space $H_{K_i}$, where $K_i(z,w)=\frac{1}{(1-z\bar{w})^{\lambda_i}}$, $z,w\in \mathbb{D}, \,1\leq i\leq n$. Suppose that $1\leq \lambda_i\leq \lambda_{i+1}<\lambda_i+2$ for $1\leq i\leq n-1$. Set $$T:=\left ( \begin{smallmatrix}
 M_{1,z}^* & M_{\psi_{1,2}}^* & M_{\phi_{1,3}}^* & \cdots &M_{\phi_{1,n-1}}^*& M_{\phi_{1,n}}^*\\
&M_{2, z}^*&M_{\psi_{2,3}}^*& M_{\phi_{2,4}}^*&\cdots &M_{\phi_{2,n}}^* \\
&&\ddots&\ddots & \ddots & \vdots\\
&&&M_{n-2,z}^* & M_{\psi_{n-2,n-1}}^* & M_{\phi_{n-2,n}}^*\\
&&&& M_{n-1,z}^* & M_{\psi_{n-1,n}}^*\\
&&&&& M_{n,z}^*
\end{smallmatrix} \right )$$
where $\psi_{i,i+1}\in C(\bar{\mathbb{D}})\cap \mbox{Hol}(\mathbb{D}), 1\leq i\leq n-1$, is non-zero and  $\phi_{i,j}\in H^{\infty}(\mathbb{D})$ for $1\leq i<j\leq n$ and $j-i \geq 2$. The operator $T$ is weakly homogeneous if and only if each $\psi_{i,i+1}$, $1\leq i\leq n-1$, is non-vanishing.
\end{prop}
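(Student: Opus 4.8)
The plan is to compute $\phi(T)$ for each $\phi$ in $\mbox{M}\ddot{\mbox{o}}\mbox{b}$, normalise its diagonal using the homogeneity of the entries $M^*_{i,z}$, and then identify the similarity class of $\phi(T)$ from the criterion of the previous Proposition (similarity of such operators in terms of the zeros of the first super-diagonal symbols). First I would record the structure of $T$: each $M^*_{i,z}$ is a homogeneous operator in $B_1(\mathbb{D})$ (since $\lambda_i\ge 1$), and $(M^*_{i,z},M^*_{i+1,z})$ satisfies Property $(H)$ by Corollary \ref{HMPH} because $\lambda_{i+1}-\lambda_i<2$. Since a holomorphic multiplier intertwines the relevant weighted backward shifts, the diagonal $D:=\mathrm{diag}\{M^*_{1,z},\dots,M^*_{n,z}\}$ lies in $\{T\}'$; writing $T=D+N$ with $N$ strictly upper triangular we get $DN=ND$, $N^{\,n}=0$ and $\sigma(T)=\overline{\mathbb{D}}$, and since $\phi$ is holomorphic on a neighbourhood of $\overline{\mathbb{D}}$ the nilpotent functional calculus gives
\[
\phi(T)=\sum_{k=0}^{n-1}\tfrac{1}{k!}\,\phi^{(k)}(D)\,N^{k}.
\]
Thus $\phi(T)$ is again upper triangular for the same flag, with diagonal blocks $\phi(M^*_{j,z})$ and first super-diagonal blocks $\phi'(M^*_{i,z})\,M^*_{\psi_{i,i+1}}$, so $T,\phi(T)\in\mathcal{FB}_n(\mathbb{D})$.

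Next I would normalise. By homogeneity choose unitaries $U_j$ on $H_{K_j}$ with $U_j^{*}\phi(M^*_{j,z})U_j=M^*_{j,z}$, and conjugate $\phi(T)$ by $U=\bigoplus_j U_j$ to get $\widehat T_\phi\sim_u\phi(T)$, now literally of the form in the statement with diagonal $\{M^*_{j,z}\}$. One computes $\phi'(M^*_{i,z})=M^*_{g_\phi}$ with $g_\phi$ invertible in $H^\infty(\mathbb{D})$ (e.g.\ $g_{\phi_\alpha}(w)=(1-|\alpha|^{2})(1-\alpha w)^{-2}$), while the $U_j$ act on multiplication symbols as a weighted composition with a fixed M\"obius map $\phi_\sharp$ --- determined by $\phi$ and running over $\mbox{M}\ddot{\mbox{o}}\mbox{b}$ as $\phi$ does --- whose weight is non-vanishing and lies in $C(\overline{\mathbb{D}})$. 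Hence the $(i,i+1)$-symbol of $\widehat T_\phi$ has the form $\widehat\psi^{\,\phi}_{i,i+1}=h_i\cdot(\psi_{i,i+1}\circ\phi_\sharp)$ with $h_i\in C(\overline{\mathbb{D}})$ non-vanishing; in particular $\widehat\psi^{\,\phi}_{i,i+1}\in C(\overline{\mathbb{D}})\cap\mathrm{Hol}(\mathbb{D})$ and its zero set inside $\mathbb{D}$ is $\phi_\sharp(Z_i)$, where $Z_i:=\{w\in\mathbb{D}:\psi_{i,i+1}(w)=0\}$.

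Now I would conclude in both directions. If every $\psi_{i,i+1}$ is non-vanishing, then $Z_i=\emptyset$, the products $\psi_{i,i+1}\cdots\psi_{j-1,j}$ are invertible in $C(\overline{\mathbb{D}})$, so the factorisation condition puts $T$ and each $\widehat T_\phi$ in $\mathcal{CFB}_n(\mathbb{D})$; the previous Proposition then applies and, since all the symbols involved are non-vanishing elements of $C(\overline{\mathbb{D}})$, it yields $\phi(T)\sim_u\widehat T_\phi\sim_s T$ for every $\phi$, so $T$ is weakly homogeneous. Conversely, suppose $T$ is weakly homogeneous; fix $\phi$ and an invertible $X$ with $XT=\phi(T)X$. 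By Proposition \ref{pjjkm} (applied in $\mathcal{FB}_n(\mathbb{D})$) $X$ is upper triangular with invertible diagonal blocks $X_{j,j}$, and comparing $(i,i+1)$-entries shows $X_{i,i}M^*_{\psi_{i,i+1}}-\phi(T)_{i,i+1}X_{i+1,i+1}\in\ker\tau_{\phi(M^*_{i,z}),M^*_{i+1,z}}\cap\operatorname{ran}\tau_{\phi(M^*_{i,z}),M^*_{i+1,z}}$, which is $\{0\}$ since $(\phi(M^*_{i,z}),M^*_{i+1,z})$ satisfies Property $(H)$ by Lemma \ref{SPH} and Corollary \ref{HMPH}. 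As $X_{i,i},X_{i+1,i+1}$ are invertible intertwiners of homogeneous $B_1(\mathbb{D})$ operators, they are the homogeneity unitaries followed by multiplication by invertible $H^\infty(\mathbb{D})$ functions, so the identity $X_{i,i}M^*_{\psi_{i,i+1}}=\phi'(M^*_{i,z})M^*_{\psi_{i,i+1}}X_{i+1,i+1}$ forces $\psi_{i,i+1}$ and $\psi_{i,i+1}\circ\phi_\sharp$ to have the same zeros in $\mathbb{D}$, i.e.\ $Z_i=\phi_\sharp(Z_i)$; letting $\phi$ vary, each $Z_i$ is invariant under the transitive group $\mbox{M}\ddot{\mbox{o}}\mbox{b}$, hence, being discrete, empty, so each $\psi_{i,i+1}$ is non-vanishing.

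The hardest point will be the normalisation step: making the action of the homogeneity unitaries $U_j$ on multiplication symbols explicit enough to see that $\psi_{i,i+1}$ is carried to a non-vanishing multiple of $\psi_{i,i+1}\circ\phi_\sharp$, so that the zero set of the super-diagonal symbol is merely transported by a M\"obius transformation. Once this is in place the statement reduces to the elementary observation that a non-empty discrete subset of $\mathbb{D}$ cannot be invariant under all M\"obius transformations, which forces the zero sets $Z_i$ to be empty.
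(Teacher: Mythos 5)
Your overall strategy is essentially the paper's: reduce the problem to the first super-diagonal entries, use Property $(H)$ (Corollary \ref{HMPH} together with Lemma \ref{J3}) to force an intertwiner between $T$ and $\phi(T)$ to act block-diagonally on the $2\times2$ blocks, identify the diagonal blocks as homogeneity unitaries composed with invertible multipliers, and then use transitivity of $\mbox{M}\ddot{\mbox{o}}\mbox{b}$ to rule out zeros of $\psi_{i,i+1}$. The sufficiency direction, routed through the preceding similarity proposition rather than the paper's explicit weighted-composition intertwiners $X_i=M_{(\psi_{i,i+1}\circ\phi^{-1})((\phi^{-1})'\circ\phi)/\psi_{i,i+1}}\cdots C_{\phi^{-1}}$, is acceptable modulo the normalisation computation you yourself flag as incomplete.

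The genuine gap is in the necessity direction: you only exclude zeros of $\psi_{i,i+1}$ in the open disk. Your final step, ``$Z_i$ is M\"obius-invariant and discrete, hence empty,'' says nothing about zeros on $\mathbb{T}$, whereas the conclusion ``non-vanishing'' must be read on $\bar{\mathbb{D}}$ --- indeed your own sufficiency argument inverts $\psi_{i,i+1}$ in $C(\bar{\mathbb{D}})$, and any intertwiner implementing $\phi(T)\sim_s T$ forces $\psi_{i,i+1}$ to be bounded below on $\mathbb{D}$. As written, the two implications therefore prove different statements and do not close the equivalence. The paper closes it with a separate boundary argument: taking $\phi(z)=e^{i\theta}z$ yields $\psi_i(w)\overline{\psi_{i,i+1}(\bar w)}=e^{i\theta}\psi_{i+1}(w)\overline{\psi_{i,i+1}(e^{i\theta}\bar w)}$ for all $w\in\mathbb{D}$; using the continuity of $\psi_{i,i+1}$ on $\bar{\mathbb{D}}$ and the fact that the auxiliary functions $\psi_i,\psi_{i+1}$ are bounded above and below, a single zero at $e^{i\theta_0}\in\mathbb{T}$ propagates (by letting $w_n\to e^{-i\theta_0}$ and varying $\theta$) to every point of $\mathbb{T}$, forcing $\psi_{i,i+1}\equiv 0$ on $\bar{\mathbb{D}}$, a contradiction. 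You need to supply this step or an equivalent one; it is precisely where the hypothesis $\psi_{i,i+1}\in C(\bar{\mathbb{D}})$ is used, and your proposal never invokes that hypothesis.
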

\begin{proof}
Set $$T_1:=\left ( \begin{smallmatrix}
 M_{1,z}^* & M_{\psi_{1,2}}^* &0 & \cdots &\cdots &0\\
&M_{2,z}^*&M_{\psi_{2,3}}^*& 0&\cdots &0 \\
&&\ddots&\ddots & \ddots & \vdots\\
&&&M_{n-2,z}^* & M_{\psi_{n-2,n-1}}^* & 0\\
&&&& M_{n-1,z}^* & M_{\psi_{n-1,n}}^*\\
&&&&& M_{n,z}^*
\end{smallmatrix} \right ).$$
By Corollary \ref{HMPH}, Lemma \ref{J21} and Lemma \ref{J3}, it follows that $T$ is weakly homogeneous if and only if $T_1$ is weakly homogeneous.
First, we show that necessary part, that is, if given conditions are satisfied, then $T_1$ is weakly homogeneous. It suffices to show that $T_1^*$ is weakly homogeneous. To show this, we consider
$X_1=M_{\tfrac{(\psi_{1,2}\circ \phi^{-1})((\phi^{-1})^{\prime}\circ\phi)}{\psi_{1,2}}} M_{\tfrac{(\psi_{2,3}\circ \phi^{-1})((\phi^{-1})^{\prime}\circ\phi)}{\psi_{2,3}}}\cdots M_{\tfrac{(\psi_{n-1,n}\circ \phi^{-1})((\phi^{-1})^{\prime}\circ\phi)}{\psi_{n-1,n}}}C_{\phi^{-1}}, X_2=M_{\tfrac{(\psi_{2,3}\circ \phi^{-1})((\phi^{-1})^{\prime}\circ\phi)}{\psi_{2,3}}}\cdots M_{\tfrac{(\psi_{n-1,n}\circ \phi^{-1})((\phi^{-1})^{\prime}\circ\phi)}{\psi_{n-1,n}}}C_{\phi^{-1}},\ldots,
X_{n-1}=M_{\tfrac{(\psi_{n-1,n}\circ \phi^{-1})((\phi^{-1})^{\prime}\circ\phi)}{\psi_{n-1,n}}}C_{\phi^{-1}}, X_n=C_{\phi^{-1}}$. As each $X_i, 1\leq i\leq n,$  is invertible, so  $$X_{\phi}=\left ( \begin{smallmatrix}
X_1 & 0&0 &\ldots &0\\
0 & X_2 & 0& \ldots&0\\
\vdots&\vdots&\ddots& \ddots & \vdots\\
0&0&\cdots& X_{n-1} &0\\
0&0&\cdots&0& X_n
\end{smallmatrix} \right )$$ is invertible and note that $\phi(T_1^*)X_{\phi}=X_{\phi} T_1^*$. 

Conversely, assume that $T_1$ is weakly homogeneous. So, by Lemma \ref{J3}, there exist invertible operators $X_1,X_2,\ldots, X_n$ such that $X_iM_{i,z}^*={\phi(M_{i,z}^*)}X_i, 1\leq i\leq n,$ and $X_iM_{\psi_{i,i+1}}^*=\phi^{\prime}(M_{i,z}^*)M_{{\psi}_{i,i+1}}^*X_{i+1}, 1\leq i\leq n-1$. Set $t_i(w):=K_{i,\bar{w}}(\cdot)$ and $t_{i,\phi}:=t_i\circ \phi^{-1}$, where $K_{i,\bar{w}}(z)=K_{i}(z,\bar{w})$ for $z,w\in\mathbb{D}$. Since $X_iM_{i,z}^*={\phi(M_{i,z}^*)}X_i$, so there a $\psi_i$ in $H^{\infty}(\mathbb{D})$ such that $X_i(t_i(w))=\psi_i(w)t_{i,\phi}(w)$ for all $w$ in $\mathbb{D}$, $1\leq i\leq n$. For $1\leq i\leq n$, $X_i$ is invertible, so $\psi_i(w)\neq 0$ for all $w$ in $\mathbb{D}$, and $\frac{1}{\psi_{i}}$ is a bounded holomophic function.  From $X_iM_{\psi_{i,i+1}}^*=\phi^{\prime}(M_{i,z}^*)M_{{\psi}_{i,i+1}}^*X_{i+1}$, $1\leq i\leq n-1$, we get 
\begin{eqnarray}\label{ewh}\psi_i(w)\overline{\psi_{i,i+1}(\bar{w})}=\psi_{i+1}(w)\overline{\psi_{i,i+1}(\overline{\phi^{-1}(w)})} \phi^{\prime}(\phi^{-1}(w)),\;\;\;w\in \mathbb{D}.\end{eqnarray}
We claim that $\psi_{i,i+1}, 1\leq i\leq n-1$, is non-vanishing. Suppose on contrary there exists a point $w_0 \in \mathbb{D}$ such that $\psi_{i,i+1}(w_0)=0$. Since M$\ddot{o}$b acts transitively on $\mathbb{D}$, so by equation (\ref{ewh}), it follows that $\psi_{i,i+1}(w)=0$ for all $w\in {\mathbb{D}}$ and hence $\psi_{i,i+1}(w)=0$ for all $w$ in $\bar{\mathbb{D}}$. This contradicts that $\psi_{i,i+1}$ is a non-zero function. Thus $\psi_{i,i+1}(w)\neq 0$ for all $w\in \mathbb{D}$.

Now we show that $\psi_{i,i+1}, 1\leq i\leq n-1$, is non-vanishing on $\mathbb{T}=\{z\in \mathbb{C}:|z|=1\}$.  Replacing $\phi$ by biholomorphic map $z\mapsto e^{i\theta}z$ in equation (\ref{ewh}), we obtain
 \begin{eqnarray}\label{ewh1}\psi_i(w)\overline{\psi_{i,i+1}(\bar{w})}= e^{i\theta}\psi_{i+1}(w)\overline{\psi_{i,i+1}(e^{i\theta}\bar{w})},\;\;\; w\in \mathbb{D},\;\;\theta\in \mathbb{R}.\end{eqnarray} 
  Suppose there exists a point $e^{i\theta_0}$ such that $\psi_{i,i+1}(e^{i\theta_0})=0$. Choose a sequence $\{w_n\}$ in $\mathbb{D}$ such that $w_n \to e^{-i\theta_0}$ as $n\to \infty$. From equation (\ref{ewh1}), we get
 \begin{eqnarray}\label{ewh2}\psi_i(w_n)\overline{\psi_{i,i+1}(\bar{w_n})}= e^{i\theta}\psi_{i+1}(w_n)\overline{\psi_{i,i+1}(e^{i\theta}\bar{w_n})}.\end{eqnarray}
Since $\psi_{i,i+1}\in C(\bar{\mathbb{D}})$ and $\psi_i, \psi_{i+1}$ are bounded above and below on $\mathbb{D}$, so from equation (\ref{ewh2}), as $n\to \infty$, we get $\psi_{i,i+1}(e^{i(\theta+\theta_0)})=0$ for all $\theta \in \mathbb{R}$.  Thus $\psi_{i,i+1}$ is zero on at every point of $\mathbb{T}$, and hence
$\psi_{i,i+1}$ vanishes identically on $\bar{\mathbb{D}}$. This again contradicts  the hypothesis that $\psi_{i,i+1}$ is a non-zero function. This completes the proof.  
 \end{proof}


\begin{thebibliography}{99}

\bibitem {Al}{D. Alpay}, \emph{ A remark on the Cowen-Douglas classes $B_n(\Omega)$}. Arch. Math. (Basel) 51 (1988), no. 6, 539-546.

\bibitem{SO}{C. Apostol, L. A. Fialkow, D. A. Herrero and D. Voiculescu,}\emph{ Approximation of Hilbert Space Operators II, } Research Notes in Math. Vol. 102, Pitman Book Ltd., 1984.

\bibitem{CM}{C. Apostol,  M. Martin,} \emph{
$C^*$-algebra approach to the Cowen-Douglas theory.}Topics in modern operator theory (Timioara/Herculane, 1980), pp. 45-51, Operator Theory: Adv. Appl., 2, Birkhäuser, Basel-Boston, Mass., 1981.

\bibitem{CA}{M. Carlsson},  \emph{ On the Cowen-Douglas class for Banach space operators.} Integral Equations Operator Theory 61 (2008), no. 4, 593-598.

\bibitem{JB}{J.B. Cowen},\emph{The theory of subnormal operators}. Mathematical Surveys and Monographs, 36. American Mathematical Society, Providence, RI, 1991. xvi+436 pp. ISBN: 0-8218-1536-9 

\bibitem{CD}
{M. J. Cowen and R. G. Douglas}, \emph{Complex geometry and
operator theory}, Acta Math. 141 (1978), 187-261. 

\bibitem{Chen}{L. Chen and  R. G. Douglas,} \emph{A local theory for operator tuples in the Cowen-Douglas class.} Adv. Math. 307 (2017), 754-779.


\bibitem{CM1} {D. N. Clark and G. Misra,}  \emph{On curvature and
similarity}, Michigan Math. J., 30 (1983), 361-367.

\bibitem{CM} {D. N. Clark and G. Misra,} \emph{On weighted shifts, curvature and
similarity}, J. London Math. Soc., 31(1985), 357-368.


\bibitem{CM2} {D. N. Clark and G. Misra,} \emph{ The curvature
function and similarity of operators}, International symposium on
complex analysis and applications (Arandjelovac, 1984) Mat. Vesnik
\textbf{37}(1985), no. 1, 21-32.

\bibitem{CS} {R. Curto and N. Salinas}, \emph{Generalized Bergman kernels and the Cowen-Douglas theory,} Amer. J. Math. 106 (1984), 447-488.

\bibitem{DKS} R. G. Douglas,  H. Kwon and S. Treil, \emph{Similarity of n-hypercontractions and backward Bergman shifts.,}
J. Lond. Math. Soc. (2) 88 (2013), no. 3, 637-648.

\bibitem{DM}{R. G. Douglas and  G. Misra}, \emph{Equivalence of quotient Hilbert modules. II,} Trans. Amer. Math. Soc., 360 (2008), 2229-2264.

\bibitem{EL1}{J. Eschmeier and S. Langendörfer}, \emph{Cowen-Douglas tuples and fiber dimensions.} J. Operator Theory 78 (2017), no. 1, 21-43. 

\bibitem{EL2}{J. Eschmeier and S. Langendörfer}, \emph{Cowen-Douglas operators and dominating sets.} J. Operator Theory 72 (2014), no. 1, 277-290. 




\bibitem{Ghara} S. Ghara, \emph{The orbit of a bounded operator under the Möbius group modulo similarity equivalence}, arXiv:1811.05428




\bibitem{Hal} P. R. Halmos, \emph{Ten problems in Hilbert space}, Bull. Amer. Math. Soc., 76 (1970) 887-933.

\bibitem{Her} {D. A. Herrero}, \emph{ Spectral pictures of operators in the
Cowen-Douglas class $B_{n}(\Omega)$ and its closure.} J. Operator
Theory 18 (1987), no. 2, 213-222.



\bibitem{HCC}{B. Hou, P. Cui and Y. Cao,} \emph{Chaos for Cowen-Douglas operators. } Proc. Amer. Math. Soc. 138 (2010), no. 3, 929-936.

\bibitem{HJK} Y. Hou, K. Ji and H. Kwon, \emph{The Trace of the Curvature determines Similarity. },
Studia Math., 236, No. 2, 193-200 (2017). 




\bibitem{Jig}{ G. Ji,} \emph{ Generalized Cowen-Douglas operators over Hilbert 
C*-modules.} Integral Equations Operator Theory 20 (1994), no. 4, 395-409.

\bibitem{JS} K. Ji and J. Sarkar, \emph{Similarity of Quotient Hilbert modules in the Cowen-Douglas Class,  } arXiv:1212.2707 .

\bibitem{JJKMCR} {K. Ji, C. Jiang, D. K. Keshari and G. Misra,} \emph{Flag structure for operators in the Cowen-Douglas class}, Comptes rendus - Math\'{e}matique, 352 (2014), 511-514.

\bibitem{JJKM}{K. Ji, C. Jiang, D. K. Keshari and G. Misra,} \emph{Rigidity of the flag structure for a class of  Cowen-Douglas operators}, J. Func. Anal.,  272 (2017), 2899-2932.

\bibitem{JLY}{Y. Ji,  J. Li,  Y. Yang} \emph{A characterization of bilateral operator weighted shifts being Cowen-Douglas operators.} Proc. Amer. Math. Soc. 129 (2001), no. 11, 3205-3210. 

\bibitem{JJM}{C.Jiang, K. Ji and G. Misra,}\emph{Classification of quasi-homogeneous holomorphic curves and operators in the Cowen-Douglas class}, J. Func. Anal.,  274(2017), 2399-2445.


\bibitem{Jiang0} {C. Jiang}, \emph{Similarity, reducibility and approximation of the Cowen-Douglas operators,} J. Operator Theory, 32 (1994),  77-89.

\bibitem{Jiang1} C. Jiang, \textit{Similarity classification of
Cowen-Douglas operators,} Canad. J. Math., 56 (2004), 742-775.

\bibitem{Jia} {C. Jiang and K. Ji,} \emph{Similarity classification of holomorphic curves,} Adv. Math., 215 (2007), 446-468.



\bibitem{JGJ}{C. Jiang, X. Guo and K. Ji,} \emph{$K$-group and similarity classification of operators,} J. Funct. Anal., 225 (2005), 167-192.

\bibitem{JW}{ C. Jiang and Z. Wang,} \emph{Strongly irreducible operators on Hilbert space,} Pitman Research Notes in Mathematics Series, 389. Longman, Harlow, 1998. x+243 pp. ISBN: 0-582-30594-2 

\bibitem{JW0}{ C. Jiang and Z. Wang,} \emph{Structure of Hilbert Space Operators,}World Scientific Publishing Co. Pte. Ltd., Hackensack, NJ, 2006. x+248 pp. ISBN: 981-256-616-3 

\bibitem{JW1}{ C. Jiang and Z. Wang,} \emph{The spectral picture and the closure of the similarity orbit of strongly irreducible operators,} Integral Equations Operator Theory, 24 (1996),  81-105.

\bibitem{JZ}{C. Jiang and D. Zheng,} \emph{Similarity of analytic Toeplitz operator on the Bergmann space. } J. Funct. Anal.,
258, 2961-2982 (2010)



\bibitem{CRKM} A. Koranyi and G. Misra, \emph{Homogeneous Hermitian holomorphic vector bundles and the Cowen-Douglas class over bounded symmetric domains}, Comptes rendus - Math\`{e}matique, 354 (2016), 291-295.

\bibitem{KM}
{A. Kor\'{a}nyi and G. Misra,} \emph{A classification of homogeneous
operators in the Cowen-Douglas class,} Adv. Math., 226 (2011)
5338-5360.

\bibitem{KM_1}{A. Kor\'{a}nyi and G. Misra,} \emph{Multiplicity-free homogeneous
operators in the Cowen-Douglas class,} Perspectives in mathematical
sciences. II, 83-101, Stat. Sci. Interdiscip. Res., 8, World Sci.
Publ., Hackensack, NJ, 2009.

\bibitem{KM0} A. Kor\'{a}nyi and G. Misra, \emph{Homogeneous operators on Hilbert spaces of holomorphic functions}, J. Func. Anal., 254 (2008), 2419-2436.

\bibitem{KS}{S. Krantz and N. Salinas,} \emph{ Proper holomorphic mappings and the Cowen-Douglas class. } Proc. Amer. Math. Soc. 117 (1993), no. 1, 99-105.



\bibitem{KT}
{H.  Kwon  and S. Treil,} \emph{ Similarity of operators and
geometry of eigenvector bundles,} Publ. Mat., 53. (2009), 417-438.

\bibitem{LS}{ Q. Lin and N. Salinas} \emph{ A characterization of coanalytical Toeplitz operators in Cowen-Douglas class. } Analysis and partial differential equations, 519-527, Lecture Notes in Pure and Appl. Math., 122, Dekker, New York, 1990. 

\bibitem{MS}{ M. Martin and N.  Salinas,} \emph{Flag manifolds and the Cowen-Douglas theory. } J. Operator Theory 38 (1997), no. 2, 329-365.

\bibitem {MJ}{ J. McCarthy,} \emph{Boundary values and Cowen-Douglas curvature. } J. Funct. Anal. 137 (1996), no. 1, 1-18. 

\bibitem{NY}{ P. H. Naeini and B. Yousefi},  \emph{On some properties of Cowen-Douglas class of operators.} J. Funct. Spaces 2018, Art. ID 6078594, 6 pp.

\bibitem{RR}{R. Reza}, \emph{ Curvature inequalities for operators in the Cowen-Douglas class of a planar domain}. Indiana Univ. Math. J. 67 (2018), no. 3, 1255-1279.

\bibitem{SH}{A. L. Shields,} \emph{Weighted Shift Operators And
Analytic Function Theory,} Mathematical Survey, 13 (1974), 49-128.

\bibitem{Tr}{S. Treil and B. D. Wick,}
 \emph{Analytic projections, corona problem and geometry of
holomorphic vector bundles},  Journal of the American Mathematical
Society 22 (2009), no. 1, 55-76.


\bibitem{Voi}{D. Voiculescu}
\emph{A non-commutative Weyl-von Neumann theorem}, Rev. Roum. Math. Pures et Appl. 21 (1976) 97-113. 

\bibitem{WZ} {K. Wang and G.  Zhang} \emph{Curvature inequalities for operators of the Cowen-Douglas class.} Israel J. Math. 222 (2017), no. 1, 279-296.

\bibitem{DW}{D. Wilkins,} \emph{Homogeneous vector bundles and Cowen-Douglas operators.} Internat. J. Math. 4 (1993), no. 3, 503-520. 

\bibitem{YGL}{G. Yu}, \emph{ Reducing properties and unitary invariants of a class of operators}. Chinese Science Bulletin 19 (1985).

 \bibitem{zhu} {K. Zhu},\emph{Operators in
Cowen-Douglas classes.} Illinois J. Math. 44 (2000), no. 4, 767-783.

\end{thebibliography}
\end{document}